\newtheorem{theorem}{Theorem}[section]
\newtheorem{cor}{Corollary}[section]
\newtheorem{definition}{Definition}[section]
\newtheorem{lemma}{Lemma}[section]
\newtheorem{rem}{Remark}[section]
\newtheorem{ex}{Example}[section]
\begin{document}	
\title{Herman Rings: Structure, Dynamics, and Open Problems}
\author[]{Gorachand Chakraborty\footnote{gorachand.chakraborty@skbu.ac.in, Department of Mathematics, Sidho-Kanho-Birsha University, Purulia, India.} }
\author[]{Subhasis Ghora\footnote{subhasisghora06@gmail.com (Corresponding author), Department of Sciences and Humanities, Indian Institute of Information Technology Design and Manufacturing, Kancheepuram, Chennai, India. }}
\author[]{ Tarakanta Nayak\footnote{tnayak@iitbbs.ac.in, School of Basic Sciences, Indian Institute of Technology Bhubaneswar, Bhubaneswar, India. }}


\affil{}
\date{}
\maketitle	
	
\begin{abstract}
	
A detailed and understandable summary of the core discoveries and recent developments on the Herman ring of rational and transcendental meromorphic functions is presented.  Herman rings stand out from other periodic Fatou components and significantly determines the overall dynamics of a function. This is demonstrated in this survey. The results are discussed along with some research problems.
\end{abstract}

\textit{Keywords:}
 Herman ring, Transcendental meromorphic map, Rational map.\\
Mathematics Subject Classification(2010) 37F10,  37F45
\tableofcontents	
\section{Introduction}
A rational function is an analytic self-map of the Riemann sphere $\widehat{\mathbb{C}}$. A transcendental meromorphic function is an analytic function with possible poles and a single essential singularity, which we choose to be at $\infty$. Let $f$ be a rational with degree at least two or a  transcendental meromorphic function. We refer to such a function as meromorphic throughout this article, if not specified otherwise.  The $n$-times composition of $f$ with itself is denoted by $f^n$. The study of $\{f^n\}_{n>0}$, i.e., the iterative behavior of $f$ is the concern of \textit{Complex dynamics}, also known as \textit{Holomorphic dynamics}. 

The Fatou set of $f$ is the set of all points in a neighborhood of which $\{f^n\}_{n\geq0}$ is normal in the sense of Montel.
It is an open set by definition. A maximally connected subset of the Fatou set is called a Fatou component. A Fatou component $U$ is periodic if $f^p (U) \subseteq U$ for some $p$. The smallest such $p$ is called the period of $U$. A   $1$-periodic Fatou component is called invariant. Analogously, a point  $z_0 \in \widehat{\mathbb{C}}$ is called a $p$-periodic point of $f$ is $p$ is the smallest number for which $f^p (z_0)=z_0$. There are important relationships between certain type of periodic Fatou components and periodic points.  A periodic Fatou component $U$ is called an attracting domain or a parabolic domain if $f^{np} \to a$ on $U$ where $a$ is an attracting  or a rationally indifferent $p$-periodic point of $f$, i.e., $|(f^p)' (a)|<1$ or $=1$  respectively. The number $(f^p)' (a)$ is called the multiplier of $a$.  It is possible for a $p$-periodic Fatou component $U$ that $f^{np} \to a$ on $U$ where $f^p (a)$ is not defined. This can happen only when $f$ is transcendental meromorphic and $f^j(a)=\infty$ for some $0<j<p$. Such Fatou components are known as Baker domains. Note that the limit point  of $\{f^{np}\}_{n>0}$ on $U$ is  a constant whenever $U$ is an attracting, parabolic or Baker domain.  There is also another interesting possibility for $U$, namely rotation domain  where $f^{np}$ does not converge to any constant on $U$. More precisely, a $p$-periodic Fatou component $U$ is called a rotation domain if there exists a conformal map $\phi : U \rightarrow  X$ where $X=\{z: |z|<1\}$ or $\{z:1<|z|<r\}$ for some $r <\infty$ such that  $\phi(f^p(\phi^{-1}(z)))=e^{i 2 \pi \alpha}z$ for some irrational number $\alpha$.
The Fatou component $U$ is called a Siegel disc or a Herman ring if $X$ is the disc $\{z: |z|<1\}$ or the annulus $\{z:1<|z|<r\}$  respectively. 

\par  Herman rings stand out from other periodic Fatou components in a number of ways. The most obvious one arises from the fact that $f^p$ on a $p$-periodic Herman ring $H$ is conformally conjugate to an irrational rotation of an annulus $A_r =\{z: 1< |z|<r\}$, i.e., for a conformal map $\phi: H \to A_r$, we have $\phi(f^p (\phi^{-1}(z)))= e^{i 2 \pi \alpha}z$ for some irrational number $\alpha$.  As each circle $S= \{z: |z|=r'\}$, for $1<r'<r$ is invariant under the rotation $z \mapsto e^{i 2 \pi \alpha}z$ and $\phi^{-1}(S)$ is an  $f^{p}$-invariant Jordan curve in $H$, the ring $H$  is an uncountable union of $f^{p}$-invariant Jordan curves. Although a $p$-periodic Siegel disk is likewise an uncountable union of $f^{p}$-invariant Jordan curves, a Herman ring differs in that each such curve separates the two components of $\widehat{\mathbb{C}}\setminus H$ in such a way that both components contain points of the Julia set.
In fact, Herman rings are  doubly connected whereas Siegel discs are simply connected. Some   distinctive aspects of Herman rings follow.
\begin{itemize}
	\item (Connectivity) The Julia set of a meromoprhic function is the complement of its Fatou set in $\widehat{\mathbb{C}}$.  It is well-known that an attracting or parabolic domain is either simply or infinitely connected. But a Herman ring 
is always doubly connected. Therefore, in the presence of Herman ring, the Julia set contains at least two components (i.e., maximally connected subsets of the Julia set), namely those containing the two boundary components of the Herman ring. In particular, the Julia set is disconnected in the presence of a Herman ring. In fact, it follows that there are uncountably many Julia components (see Corollary 4.5, \cite{jm}). 
\item (Buried Julia components) A Julia component is called buried if it is not the boundary of any Fatou component. Since each invariant Jordan curve in a Herman ring separates the sphere into two regions containing Julia points, Herman rings inherently create nested Julia structures. This separation mechanism suggests that Herman rings may contribute to the formation of Julia components that are disjoint from the boundary of Fatou compotents. 

\item (Unrelated to periodic points) A $p$-periodic attracting or parabolic domain contains an attracting or rationally indifferent $p$-periodic point in its interior or on its boundary respectively. Similarly, a $p$-periodic Siegel disk always contains a $p$-periodic point, which is the pre-image of $0$ under the conjugating map $\phi$. But a Herman ring can not contain any periodic point.  Constructing functions with an attracting or rationally indifferent  periodic point  has been a standard way to produce functions with attracting or parabolic domains. Similarly, an irrationally indifferent periodic point   with multiplier $e^{i 2 \pi \alpha}$  where $\alpha$ is of Brjuno type  leads to a Siegel disk. This does not work for constructing Herman rings  and  alternative tools and methods are required.
	\item (Injectivity) Like Siegel discs, the function is always injective  on its Herman ring. An essential consequence is that there are infinitely many Fatou components whenever there is a Herman ring. This rules out the possibility of a totally disconnected Julia set.
	\item (Completely invariant Fatou components) A Fatou component $U$ of $f$ is completely invariant if it is invariant  and $f^{-1}(U) \subseteq U $. A completely invariant Fatou component cannot be a Herman ring. It is known that the boundary of a completely invariant Fatou component is the Julia set and all other Fatou components (if any) are simply connected. Thus, Herman ring cannot coexist with a completely invariant Fatou component.
\end{itemize}

 This article surveys various results on Herman rings. Section \ref{diff} describes the construction of Herman rings arising from circle diffeomorphisms. Section \ref{quasi} is devoted to Shishikura’s construction of Herman rings via quasiconformal surgery. Sections \ref{rational} and \ref{trans} discuss the existence of Herman rings for rational maps and transcendental functions, respectively. Finally, Section \ref{con} concludes with a discussion of several open problems. Throughout this article, the term ring will refer exclusively to a Herman ring.
 
 Our primary aim here is to describe the present state of research related to Herman rings for both rational and transcendental meromorphic functions. The article is mostly an exposition of the research done in the field. A few illustrations have been provided to give visual descriptions of some of the results. Some unsolved questions and conjectures for future research have also been presented.

\section{From circle diffeomorphisms  to Herman rings}\label{diff}
 The existence of all types of  periodic Fatou components except Herman
rings were known before 1950 (see \cite{cayley1879application,fatou1926iteration, leau1897etude,siegel1942iteration}). Herman rings were initially conjectured to not exist for
rational functions by Fatou (see \cite{pdnf}). However, it was in 1979, when M.R. Herman gave an
example of a function having Herman ring  \cite{herman1984exemples}.  Herman's idea was based on extension of circle diffeomorphisms.
This section describes the route map to extend an analytic linearizable circle map to a Herman ring.

Let $S^1$ be the unit circle with the counterclockwise orientation.  A homeomorphism $f: S^1\to S^1$ is called  orientation preserving if  $(f(x),f(y),f(z))$
are  in counterclockwise order whenever  $(x,y,z)$ are in counterclockwise order on $S^1$ for distinct $x,y,z$.  A standard way to understand circle homeomorphisms is by lifting them to the real line.  A map $F:\mathbb{R}\to\mathbb{R}$ is called a lift of $f$ if $\Pi\circ F=f\circ\Pi,$ where $\Pi:\mathbb{R}\mapsto S^{1}$ is given by $\Pi(x)=e^{2\pi ix}$.

\begin{rem}
By definition, a lift satisfies the following diagram.  
		\begin{figure}[H]
			\centering
			\includegraphics[width=0.2\textwidth]{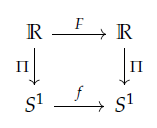}  
			\caption{The lift of the unit circle.} 
			\label{dia}
		\end{figure}

	\end{rem}	
For every circle homeomorphism $f$, there always exist a lift unique up to the addition of an integer \cite{may}. It also follows that $F(x+1)=F(x)\pm 1$ for every lift of $f$. A simple example of a circle homeomorphism is the rigid rotation $R_{\theta}(z)= e^{2 \pi i \theta}z$ by an angle $\theta$. We are interested in homeomorphisms that are conjugate to rigid rotations.  A fundamental topological invariant of circle homeomorphisms is the rotation number. 
\begin{definition}\textbf{(Rotation number)} Let $F$ be a lift of an orientation-preserving homeomorphism $f:S^1\to S^1.$  The rotation number $\rho(f)$ of $f$ is defined as $ \lim_{n\to\infty}\frac{F^n(x)-x}{n} $ where the limit is taken $\text{mod~} 1$.
	
\end{definition}
Note that the limit in the above definition actually exists. Further, $\rho(f)$   is independent of $x$ and gives an idea of the average rotation of a point under $f$. Sometimes it is useful to
work with an equivalent definition:
$\rho(f)=(\lim_{n\to\infty}\frac{F^n(x) }{n})~(\text{mod~} 1)$.  It is well-known that for an orientation-preserving
homeomorphism   $f:S^1\to S^1$, the rotation number $\rho(f)$ is irrational if and only if $f$ has no periodic point  (see for example Chapter 7, \cite{brin-stuck}). We are  concerned with circle diffeomorphisms with irrational rotation number.  

Considering  $F(x)=x+\theta$ as a lift of the rigid rotation $R_{\theta}$ of the unit circle $S^1$ for $\theta \in (0, 2 \pi]$, it is seen that the rotation number  $\rho(R_\theta)$ is indeed $\theta$.
 A set of interesting examples different from rigid rotations is given by the Arnold family  $x \mapsto x+\alpha +\beta \sin(x)$ where the rotation number is a function of both $\alpha$ and $\beta$.
 
An orientation preserving circle diffeomorphism $f$ can be conjugate to a rigid rotation. The nature of the conjugating map significantly depends on the level of smoothness of $f$ as well as on the nature of the rotation number. This subject has been studied by Denjoy and Arnold culminating - from the perspective of this article -  with the work of Herman, who proved the following in 1979 (see \cite{yoc} for a discussion).  An irrational number $x$ is called a Diophantine number of order $k\geq 2$ if there exists $\epsilon >0$ such that $|x-\frac{p}{q}|>\frac{\epsilon}{q^k}$ for all rational numbers $\frac{p}{q}$.

\begin{theorem}[\cite{herman1984exemples} ]
	Let \( f \) be a 	\( C^\infty \)  orientation-preserving diffeomorphism of the circle  with irrational rotation number \( \rho(f) \). If \( \rho \) is Diophantine then \( f \) is conjugate to \( R_\rho \) where the conjugating map is \( C^\infty \).
\end{theorem}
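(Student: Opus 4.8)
The plan is to produce a $C^\infty$ diffeomorphism $h$ of $S^1$ satisfying $h\circ f\circ h^{-1}=R_\rho$ by first establishing a merely continuous conjugacy and then bootstrapping its regularity, the decisive analytic input being a careful treatment of the small-divisor problem controlled by the Diophantine hypothesis on $\rho$. I would begin by recording the a priori conjugacy supplied by lower-smoothness theory: since $\rho(f)$ is irrational, $f$ has no periodic point, and Denjoy's theorem (valid for $C^2$ diffeomorphisms) already yields a homeomorphism $h_0$ with $h_0\circ f\circ h_0^{-1}=R_\rho$. Thus a topological conjugacy always exists, and the entire difficulty is to show it can be taken smooth. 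Passing to the lift $F(x)=x+\rho+\psi(x)$ with $\psi$ $1$-periodic, the conjugacy equation for a near-identity lift $H(x)=x+u(x)$ reads $u(x+\rho)-u(x)=\psi\bigl(x+u(x)\bigr)$, whose linearization at $u=0$ is the cohomological equation.

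Next I would solve the cohomological equation
\[
u(x+\rho)-u(x)=\phi(x),\qquad \int_0^1\phi(x)\,dx=0,
\]
by Fourier series. Writing $\phi=\sum_{k\neq 0}\hat\phi(k)\,e^{2\pi i k x}$, the formal solution is
\[
u(x)=\sum_{k\neq 0}\frac{\hat\phi(k)}{e^{2\pi i k\rho}-1}\,e^{2\pi i k x},
\]
whose denominators $e^{2\pi i k\rho}-1$ are the small divisors. Here the Diophantine condition enters decisively: from $|\rho-\tfrac{p}{q}|>\epsilon/q^k$ one deduces $|e^{2\pi i k\rho}-1|\gtrsim |k|^{-(k-1)}$, so that dividing by the small divisors costs only a fixed, finite number of derivatives. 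Consequently $\phi\in C^\infty$ gives $u\in C^\infty$, but with a quantitative \emph{loss of derivatives} in the relevant norms.

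Because of this loss, a naive linear iteration diverges, so I would set up a Newton (KAM / Nash--Moser) scheme: at each stage one conjugates $f$ by (the exponential of) the solution $u$ of the cohomological equation for the current error, which shrinks the size of the perturbation $\psi$ from order $\delta$ to order $\delta^2$. The quadratic gain beats the polynomial loss of derivatives provided one interleaves smoothing operators, truncating high Fourier modes, to keep all norms controlled. The standard Nash--Moser bookkeeping then forces the composition of the successive conjugacies to converge to a $C^\infty$ limit, establishing the theorem \emph{under the hypothesis that $f$ is already $C^\infty$-close to $R_\rho$}.

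The main obstacle, and Herman's genuine innovation, is the passage from this local, perturbative statement to the \emph{global} one, in which no closeness to a rotation is assumed. The bridge is to upgrade the Denjoy conjugacy $h_0$ and to analyse the renormalized iterates $f^{q_n}$ along the continued-fraction convergents $p_n/q_n$ of $\rho$, showing that on a suitable scale $f$ does fall into the perturbative regime handled above. This demands genuine a priori bounds: distortion estimates for $f^{q_n}$, uniform control of $\log (f^{q_n})'$, and compactness arguments forcing the renormalizations to converge to rigid rotations. Proving these bounds, and feeding them into the Newton iteration, is the technically heaviest part of the argument, and I expect it to be by far the hardest step, since it is precisely where the arithmetic nature of $\rho$ must be exploited globally rather than merely perturbatively.
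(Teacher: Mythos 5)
The paper itself offers no proof of this theorem: it is quoted as Herman's 1979 result, with the reader referred to \cite{herman1984exemples} and \cite{yoc}. So there is nothing internal to compare your argument against; it can only be judged against the standard proof in the literature, whose architecture you have in fact reproduced correctly. The chain Denjoy conjugacy $\rightarrow$ cohomological equation $\rightarrow$ small-divisor estimate from the Diophantine condition $\rightarrow$ Newton/Nash--Moser iteration with smoothing $\rightarrow$ local theorem $\rightarrow$ globalization via renormalization of $f^{q_n}$ along continued-fraction convergents is exactly the right skeleton, and your small-divisor bound is correct (though you should not use $k$ both for the Diophantine exponent and for the Fourier index; with $n$ the mode index and $\tau$ the exponent, $|e^{2\pi i n\rho}-1|\geq 4\|n\rho\|\gtrsim |n|^{-(\tau-1)}$).

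The genuine gap is the one you yourself flag: the final paragraph is a description of what must be proved, not a proof. The local (perturbative) statement you sketch is essentially Arnold's theorem; the content of Herman's theorem is precisely the global step, namely the a priori bounds on $\log(f^{q_n})'$ (Denjoy-type distortion inequalities uniform in $n$), the proof that the renormalized iterates converge to rotations, and the verification that at some scale the map enters the basin of the local KAM scheme. None of these estimates is carried out, and they cannot be dismissed as routine --- they occupy the bulk of Herman's monograph and of Yoccoz's later simplification. As a survey-level account of why the theorem is true and where the Diophantine hypothesis enters, your write-up is accurate and well organized; as a proof it stops exactly at the point where the theorem ceases to be classical.
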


%
%
%
  An orientation preserving homeomorphism $f:S^1\to S^1$ is said to be analytically linearizable if there is an analytic map $\phi: S^1\to S^1$ such that $\phi \circ f \circ \phi^{-1}$ is a rigid rotation.  The theorem above gives that every analytic circle diffeomorphism with Diophantine rotation number is linearizable.
 There is a superset of Diophantine numbers, referred to as Herman numbers by  Yoccoz, who proved that an analytic diffeomorphism $f$ of the circle   is analytically linearizable if  the rotation number of \(f\) is of Herman type. 
Such $f$ can be shown to have an invariant annulus containing the unit circle on which it is analytically conjugate with a rigid rotation. This invariant annulus is in fact an invariant Herman ring for $f$. For a concrete example, see Section 6.1.1 ~\cite{branner2014quasiconformal}.  

\section{Shishikura's construction of Herman rings}\label{quasi}

A continuous complex-valued function \( f \) defined on an interval \( I \subset \mathbb{R} \) is said to be absolutely continuous on \( I \) if it satisfies the following: For every \( \varepsilon > 0 \), there exists \( \delta > 0 \) such that $\sum_{j=1}^{n} \left| f(b_j) - f(a_j) \right| < \varepsilon$ for every finite collection $\{(a_j, b_j): 1 \leq j \leq n\}$ of pairwise disjoint intervals \( (a_j, b_j) \) whose closures are contained in \( I \) such that $\sum_{j=1}^{n} |b_j - a_j| < \delta$. For  two given domains $U$ and $V$ in $\mathbb{C}$ and a real number $K\geq1$, a homeomorphism $\phi: U\rightarrow V$ is called a $K$-quasiconformal mapping (qc-mapping) if $\phi $ is absolutely continuous on almost all lines parallel to the coordinate axes, and if $|\mu_\phi|=|\frac{\partial_{\bar{z}} \phi}{\partial_{z} \phi}|\leq \frac{K-1}{K+1}$ almost everywhere with respect to the Lebesgue measure where 
$\partial_{\bar{z}} \phi = \frac{1}{2} \left( \frac{\partial \phi}{\partial x} + i \frac{\partial \phi}{\partial y} \right)  \mbox{~and~}\quad
\partial_{z} \phi = \frac{1}{2} \left( \frac{\partial \phi}{\partial x} - i \frac{\partial \phi}{\partial y} \right)$. Here $\mu_\phi$ is called the Beltrami coefficient of $\phi$. 
\par A mapping \( g : U \to \mathbb{C} \) is \( K \)-quasiregular if  \( g \) can be expressed as $g = f \circ \varphi$, where \( \varphi : U \to \varphi(U) \) is \( K \)-quasiconformal and \( f : \varphi(U) \to g(U) \) is holomorphic. A quasiregular map \( f: \widehat{\mathbb{C}} \to \widehat{\mathbb{C}} \) is called quasirational if it is quasiconformally conjugate to a rational map.

Quasiconformal surgery is a powerful technique for constructing new rational functions from existing ones by carefully modifying them while preserving essential aspects of their dynamics. A central obstacle in this process is the Identity Theorem, which prevents us from directly gluing together different analytic functions. Since a holomorphic function is entirely determined by its values on any open set, naive gluing would fail to produce a well-defined analytic map. To overcome this, we temporarily relax the requirement of analyticity and instead work with quasiconformal maps, to define conjugacies between parts of the domain. We can create a globally defined quasiregular map by gluing together pieces using these quasiconformal conjugacies. The key question is: Can this quasiregular map be conjugated back to a rational function? The answer, under the right conditions, is yes. This is made possible by the Measurable Riemann Mapping Theorem \cite{ahlfors2006lectures}, which guarantees that if we prescribe a Beltrami coefficient (a measure of distortion) that is invariant under the dynamics, then there exists a quasiconformal homeomorphism conjugating our map to a genuine rational function. 

\begin{theorem}\textbf{(Measurable Riemann Mapping Theorem)}
Let \( U \subset \mathbb{C} \) be an open set such that \( U \cong \mathbb{D} \) (respectively \( U = \mathbb{C} \)). Let \( \mu : \mathbb{C} \to \mathbb{C} \) be a measurable function satisfying $\|\mu\|_{\infty} = k < 1$. Then \( \mu \) is integrable; that is, there exists a quasiconformal homeomorphism \( \varphi : U \to \mathbb{D} \) (respectively onto \( \mathbb{C} \)) which solves the Beltrami equation, i.e.,
\[
\mu(z) = \frac{\partial_{\bar{z}} \varphi(z)}{\partial_z \varphi(z)}
\quad \text{for almost every } z \in U.
\]
Moreover, \( \varphi \) is unique up to post-composition with automorphisms of \( \mathbb{D} \) (respectively \( \mathbb{C} \)).

\end{theorem}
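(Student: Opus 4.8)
The plan is to establish the theorem first for $U=\mathbb{C}$ in a normalized form and then deduce the disk case. Indeed, if $\Phi:\mathbb{C}\to\mathbb{C}$ is a quasiconformal homeomorphism solving the Beltrami equation for $\mu$ on all of $\mathbb{C}$, and $U\cong\mathbb{D}$, then $\Phi(U)$ is again a simply connected domain with more than one boundary point; by the Riemann Mapping Theorem there is a conformal $R:\Phi(U)\to\mathbb{D}$, and since post-composition with a conformal map leaves the Beltrami coefficient unchanged, $\varphi=R\circ\Phi|_U$ is the desired solution. So it suffices to treat the plane.

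For the plane the central tool is a pair of singular integral operators. First I would introduce the Cauchy (Pompeiu) transform
\[
(Ph)(z)=-\frac{1}{\pi}\int_{\mathbb{C}}\frac{h(\zeta)}{\zeta-z}\,dA(\zeta),
\]
which satisfies $\partial_{\bar z}(Ph)=h$, and the Beurling transform
\[
(Th)(z)=-\frac{1}{\pi}\,\mathrm{p.v.}\!\int_{\mathbb{C}}\frac{h(\zeta)}{(\zeta-z)^2}\,dA(\zeta),
\]
which satisfies $\partial_z(Ph)=Th$. Seeking a solution of the form $\varphi(z)=z+Pf(z)$, one computes $\partial_{\bar z}\varphi=f$ and $\partial_z\varphi=1+Tf$, so that the Beltrami equation $\partial_{\bar z}\varphi=\mu\,\partial_z\varphi$ becomes the integral equation $(I-\mu T)f=\mu$, which I would solve by the Neumann series $f=\sum_{n\ge 0}(\mu T)^n\mu$.

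The convergence of this series is the crux, and I expect it to be the main obstacle. The operator $T$ is a Fourier multiplier whose symbol has modulus one, hence is an isometry on $L^2$; the Calder\'on--Zygmund theory shows it is bounded on every $L^p$ for $1<p<\infty$, and interpolation against the $L^2$ isometry gives $\|T\|_{L^p}\to 1$ as $p\to 2$. Since $\|\mu\|_\infty=k<1$, one can then fix $p>2$ close enough to $2$ that $k\,\|T\|_{L^p}<1$; the operator $\mu T$ is then a contraction on $L^p$, the Neumann series converges, and $f\in L^p$. Establishing these sharp mapping properties of the Beurling transform, together with the continuity of its norm at $p=2$, is the genuinely hard analytic input.

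It remains to upgrade $\varphi$ to a quasiconformal homeomorphism and to prove uniqueness. Since $f\in L^p$ with $p>2$, standard estimates make $Pf$ H\"older continuous and vanishing at $\infty$, so $\varphi(z)=z+Pf(z)$ is continuous with the normalization $\varphi(z)=z+O(1/z)$. To see that $\varphi$ is a homeomorphism with Beltrami coefficient $\mu$ almost everywhere, I would approximate $\mu$ by smooth, compactly supported coefficients $\mu_n$ with $\|\mu_n\|_\infty\le k$; for smooth data the equation is solvable classically with diffeomorphic solutions $\varphi_n$, the associated $f_n$ converge to $f$ in $L^p$, and hence $\varphi_n\to\varphi$ locally uniformly. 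As each $\varphi_n$ is $K$-quasiconformal with $K=\frac{1+k}{1-k}$, compactness of normalized $K$-quasiconformal families forces the limit $\varphi$ to be $K$-quasiconformal with the prescribed dilatation. Finally, if $\varphi_1$ and $\varphi_2$ both solve the equation for the same $\mu$, then $\varphi_1\circ\varphi_2^{-1}$ has vanishing Beltrami coefficient almost everywhere, so by Weyl's lemma it is holomorphic, hence a conformal automorphism of $\mathbb{C}$ (respectively $\mathbb{D}$); this is precisely the asserted uniqueness up to post-composition with an automorphism.
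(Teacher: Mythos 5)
The paper offers no proof of this statement: it is quoted as a classical tool and attributed to Ahlfors's lectures \cite{ahlfors2006lectures}, so there is no internal argument to compare yours against. Your outline is the standard Ahlfors--Bers/Bojarski proof found in that reference and in Chapter~1 of \cite{branner2014quasiconformal}: reduce to the plane, set $\varphi=z+Pf$ with $P$ the Cauchy transform, convert the Beltrami equation to $(I-\mu T)f=\mu$ with $T$ the Beurling transform, invert by a Neumann series in $L^p$ for $p>2$ close to $2$ using $\|T\|_{L^2}=1$ and continuity of $p\mapsto\|T\|_{L^p}$, then upgrade to a homeomorphism by smooth approximation and compactness of normalized $K$-quasiconformal families, with uniqueness from Weyl's lemma. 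All of these steps are correct and correctly ordered, and you rightly identify the sharp $L^p$ bounds on $T$ as the hard analytic input.

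One step as written would fail and needs the standard repair: the Neumann series $f=\sum_{n\ge 0}(\mu T)^n\mu$ starts from the vector $\mu$ itself, so it converges in $L^p$ only if $\mu\in L^p(\mathbb{C})$, which a general measurable $\mu$ with $\|\mu\|_\infty=k<1$ need not satisfy. The usual fix is to first prove the theorem for compactly supported $\mu$, and then handle arbitrary $\mu$ by splitting it into a piece supported in the unit disk and a piece supported outside, conjugating the latter by $z\mapsto 1/z$ to make its support compact, solving each separately, and composing the two solutions. Also, in your reduction of the disk case you should note that post-composing by the Riemann map of $\Phi(U)$ preserves the Beltrami coefficient only because conformal post-composition does not change $\partial_{\bar z}\varphi/\partial_z\varphi$; that is true but worth a line, and the uniqueness statement in the disk case likewise reduces to the plane case via this observation. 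With those additions your sketch is a complete and faithful account of the classical proof.
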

Another tool used in  Shishikura's construction is the Fundamental Lemma for Quasiconformal Surgery \cite{branner2014quasiconformal,smm}. 
\begin{theorem}\textbf{(Fundamental Lemma)}\label{fl}
    Let \( f : \widehat{\mathbb{C}} \to \widehat{\mathbb{C}} \) be quasiregular.  
For \( p \geq 1 \), suppose there exist:

\begin{enumerate}
    \item \( U = U_1 \cup \cdots \cup U_p \), consisting of \( p \) disjoint open subsets \( U_j \subset \hat{\mathbb{C}} \) such that, for \( 1 \leq j < p \),   $f(U_j) = U_{j+1}, \quad f(U_p) \subset U_1$;

    \item A quasiconformal homeomorphism \( \psi : U \to \tilde{U} \), where \( \tilde{U} \subset \hat{\mathbb{C}} \) (the gluing map);
    
    \item A quasiregular map \( H : \tilde{U} \to \tilde{U} \) such that \( H^p \) is holomorphic;
\end{enumerate}
such that \( f|_U = \psi^{-1} \circ H \circ \psi \) and \( \partial_{\bar{z}} f = 0 \) almost everywhere in \( \hat{\mathbb{C}} \setminus f^{-N}(U) \), for some \( N \geq 0 \). Then \( f \) is quasirational.
\end{theorem}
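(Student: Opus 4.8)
The plan is to exhibit $f$ as a quasiconformal deformation of a rational map by producing an $f$-invariant measurable conformal structure of bounded dilatation and then integrating it via the Measurable Riemann Mapping Theorem. Concretely, I would construct a Beltrami coefficient $\mu$ on $\hC$ with $f^{*}\mu=\mu$ almost everywhere and $\|\mu\|_{\infty}\leq k<1$. Granting this, the Measurable Riemann Mapping Theorem gives a quasiconformal homeomorphism $\varphi:\hC\to\hC$ with Beltrami coefficient $\mu$, and the invariance $f^{*}\mu=\mu$ forces $g:=\varphi\circ f\circ\varphi^{-1}$ to have Beltrami coefficient $0$ almost everywhere. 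A quasiregular self-map of $\hC$ with vanishing dilatation is holomorphic, so $g$ is rational; hence $f=\varphi^{-1}\circ g\circ\varphi$ is quasiconformally conjugate to a rational map, i.e. quasirational. All the work is thus in building $\mu$.

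\textbf{Step 1: an $H$-invariant structure on $\tilde U$.} Writing $\tilde U_{j}=\psi(U_{j})$, the conjugacy $f|_{U}=\psi^{-1}\circ H\circ\psi$ transports the cyclic action to $H$, so $H(\tilde U_{j})=\tilde U_{j+1}$ for $1\le j<p$ and $H(\tilde U_{p})\subset\tilde U_{1}$. Starting from the standard structure $\sigma_{0}$ on $\tilde U_{p}$, I would define a structure $\sigma$ on $\tilde U$ by the finite pullback rule $\sigma|_{\tilde U_{j}}=(H^{\,p-j})^{*}\sigma_{0}$ for $1\le j\le p$. Pullback, unlike push-forward, needs no injectivity of $H$, and a direct check gives $H^{*}(\sigma|_{\tilde U_{j+1}})=\sigma|_{\tilde U_{j}}$ for $j<p$. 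The only nontrivial point is consistency when the cycle closes, which reduces to $(H^{p})^{*}\sigma_{0}=\sigma_{0}$ on $\tilde U_{p}$; this holds precisely because $H^{p}$ is holomorphic. Since $\sigma$ is obtained from $\sigma_{0}$ by at most $p-1$ pullbacks under the quasiregular $H$, its dilatation is bounded. Transporting back, $\psi^{*}\sigma$ has bounded dilatation on $U$, and $f^{*}(\psi^{*}\sigma)=\psi^{*}H^{*}\sigma=\psi^{*}\sigma$ shows it is $f$-invariant there.

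\textbf{Step 2: spreading over $\hC$.} I would extend $\mu:=\psi^{*}\sigma$ from $U$ to the sphere by the only rule compatible with invariance: if the forward orbit of $z$ first meets $U$ at time $m$, set $\mu(z)=\big((f^{m})^{*}\mu|_{U}\big)(z)$, and put $\mu=\sigma_{0}$ on the set $W$ of points whose orbit never enters $U$. This is well defined and $f$-invariant by construction; it is consistent to use the standard structure on $W$ because $W\subset\hC\setminus f^{-N}(U)$, where $f$ is holomorphic, so $f^{*}\sigma_{0}=\sigma_{0}$ there, and measurability is preserved since each piece is a pullback of a measurable structure under a quasiregular map.

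\textbf{Step 3 (the crux): the dilatation bound.} The hard part is $\|\mu\|_{\infty}<1$, since a priori the iterated pullbacks $(f^{m})^{*}$ could let the dilatation grow without bound as $m\to\infty$. The decisive observation is that $f$ fails to be conformal only on $f^{-N}(U)$. Fix $z$ with first entry time $m$; along $z,f(z),\dots,f^{m-1}(z)$ the map $f$ is non-conformal at $f^{i}(z)$ only if $f^{i}(z)\in f^{-N}(U)$, i.e. $f^{\,i+N}(z)\in U$. Minimality of $m$ forces $f^{\,i+N}(z)\notin U$ whenever $i+N<m$, so non-conformal steps occur only for $i\in\{m-N,\dots,m-1\}$, that is at most $N$ times, a bound independent of $z$ and $m$. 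Since dilatation is submultiplicative under composition and conformal maps do not increase it, the dilatation of $\mu$ at $z$ is at most $K_{\psi}^{2}\,K_{\sigma}\,K_{f}^{cN}$ for fixed constants coming from the finite dilatations of $\psi$, $\sigma$ and $f$, giving a uniform $k<1$. With $\mu$ in hand the Strategy paragraph concludes. I expect this uniform control of the accumulated dilatation — isolating the non-holomorphy of $f$ onto the finitely-often-visited set $f^{-N}(U)$ — to be the only genuinely delicate point; the remainder is bookkeeping with pullbacks of Beltrami coefficients.
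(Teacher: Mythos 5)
The paper does not actually prove this statement: it is quoted as the Fundamental Lemma of quasiconformal surgery with a citation to \cite{branner2014quasiconformal,smm}, so there is no in-paper argument to compare against. Your proposal reproduces the standard proof of that lemma (as in Branner--Fagella) and is essentially correct: you build an $H$-invariant almost complex structure on $\tilde U$ by finitely many pullbacks, using holomorphy of $H^p$ to close the cycle consistently; transport it by $\psi$; spread it over $\widehat{\mathbb{C}}$ by pullback along first-entry times; and the hypothesis that $\partial_{\bar z}f=0$ a.e.\ off $f^{-N}(U)$ caps the number of non-conformal steps in any orbit segment at $N$, giving the uniform bound $\|\mu\|_\infty<1$ needed for the Measurable Riemann Mapping Theorem. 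The only points worth making explicit in a final write-up are (i) Weyl's lemma, to pass from ``quasiregular with vanishing Beltrami coefficient a.e.'' to ``holomorphic, hence rational'' for $g=\varphi\circ f\circ\varphi^{-1}$, and (ii) that the a.e.\ statements persist under the pullbacks because quasiregular maps preserve sets of Lebesgue measure zero.
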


In essence, quasiconformal surgery allows us to construct rational functions with prescribed dynamical features (such as a Herman ring) by
\begin{enumerate}
    \item Starting with a model dynamical system (often simpler or locally known behavior).
    \item Gluing together different parts using quasiconformal maps.
    \item Applying quasiconformal theory to obtain a globally defined rational map with some specific behavior.
\end{enumerate}
We describe the outlines of the construction. The intention is to create a rational map $f$ with a $p$-periodic Herman ring for a given natural number $p$. First, consider a rational map $f_0$ with a $p$-periodic Siegel disk $S$. Let $\{S=S_1, S_2,....,S_p\}$ be the cycle of $S$. Let $\gamma_1$ be an $f_0^p-$ invariant Jordan curve in $S_1$ and $\gamma_{i+1}=f_0(\gamma_{i})$ for $i=1,2,...,(p-1)$. Now consider $p$ many rational maps $f_1, f_2,...,f_p$ such that $f_p\circ f_{p-1}\circ ...\circ f_1$ has an invariant Siegel disk $S_1'$ with rotation number $-\theta$. Let $\gamma_1'$ be a $f_p\circ f_{p-1}\circ ...\circ f_1$ invariant Jordan curve in $S_1'$ (Figure \ref{3periodic} describes the case for $p=3$). Define $S_{i+1}'=f_i(S_i')$ and $\gamma_{i+1}'=f_i(\gamma_{i}')$ for $i=1,2,...,(p-1)$.  It can be shown that there exist quasiconformal mappings $\phi_1, \phi_2,...,\phi_p: \widehat{\mathbb{C}} \rightarrow \widehat{\mathbb{C}}$ such that $\phi_i(\gamma_{i})=\gamma_{i}',~~\phi_{i+1}\circ f_0=f_i\circ \phi_i$ on $\gamma_{i}$ and $\phi_i$ is conformal in the neighborhood of $\widehat{\mathbb{C}}\setminus (S_i\cap \phi^{-1}(S_i'))$ for $i=1,2,...,p$ where $\phi_{p+1}=\phi_1$. Define a function \( g :\widehat{\mathbb{C}} \to \widehat{\mathbb{C}}  \) by
\[
g =
\begin{cases}
f_0 & \text{on } \widehat{\mathbb{C}}  \setminus  \cup_{i}B(\gamma_{i}), \\
\psi_{i+1}^{-1} \circ f_i \circ \psi_i & \text{on } B(\gamma_{i}) .
\end{cases}
\]

Note that $B(\gamma)$ is the bounded complementary component of $\gamma$. It can be proved that \( g \) is continuous. Moreover, some suitable domain can be found such that the function $g$ will satisfy all the properties of Theorem \ref{fl} and hence, the function $g$ will be quasi-regular.
%
%
%
%
%
%
Hence, there exists a quasiconformal mapping $\phi$ such that $f=\phi\circ g\circ \phi^{-1}$ is a rational map with a $p$-periodic Herman ring of rotation number $\theta$. Specific example of such construction can be found in \cite{branner2014quasiconformal,pdnf}. Although quasiconformal surgery was originally used only for rational functions, it has been extended to transcendental meromorphic functions\cite{pdnf}.
\begin{figure}[h]
\centering
{\includegraphics[width=14.5cm,height=6cm,angle=0]{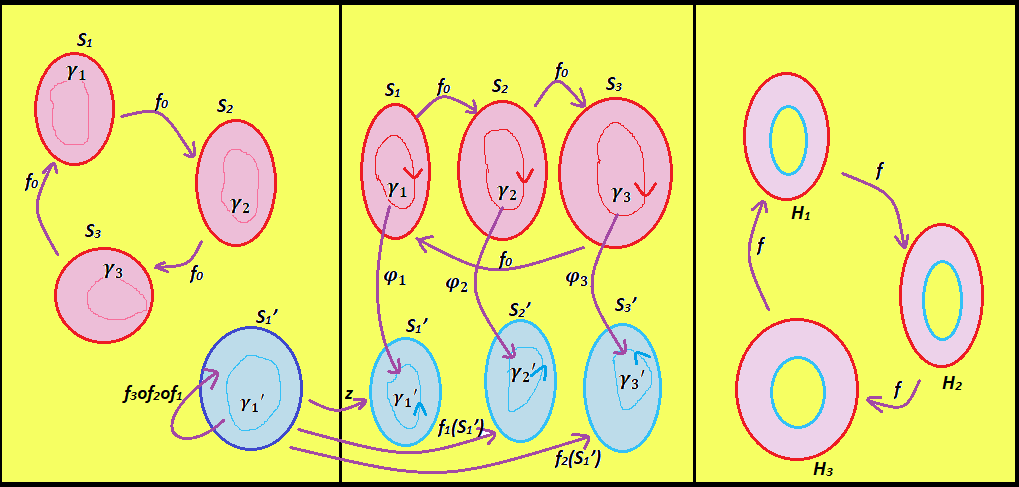}}
\caption{Construction of a $3$-periodic Herman ring.}
\label{3periodic}
\end{figure}

In \cite{FG2003}, Fagella and Geyer considered the standard (or Arnold) family of circle maps $F_{\alpha,\beta}(x) = x + \alpha + \beta \sin(x) \pmod{2\pi}$, for $ x, \alpha \in [0, 2\pi)$, and $ \ \beta \in (0,1)$. Using the quasiconformal surgery, they proved that its complexification $f_{\alpha,\beta}(z) = z e^{i\alpha} \exp\Big[\frac{1}{2} \beta \Big(z - \frac{1}{z}\Big)\Big]$ possesses a Herman ring for suitable values of $\alpha$ and $\beta$. Following the similar approach, Dom{\'\i}nguez and Fagella \cite{pdnf} showed that there exist $a, b \in \mathbb{C} \setminus \{0\}$ such that the meromorphic map $f(z) = \frac{a z^2 e^{-b z}}{z + 1}$ has a Herman ring. Their construction began with the map $g(z) = e^{2\pi i \theta} z e^{-z}$, where $\theta \in \mathbb{R} \setminus \mathbb{Q}$. Similarly, Yang \cite{yang} constructed a function
$f_{a,b}(z) = u \cdot \frac{z - b}{z - a} \cdot z^2 e^z$ having a $p$-cycle of Herman rings for some $a, b \in \mathbb{C}$ and $u = u(a,b) \in \mathbb{C} \setminus \{0\}$, starting from the map $g(z) = \lambda z^2 e^z$. It can be verified that none of these resulting maps omit any value. We believe that this is so as the starting maps themselves do not have omitted values and the fixed point  is itself an exceptional point of the function. We conjecture that to produce a map with an omitted value via quasiconformal surgery, one must start with a function that already has an omitted value and a $p$-cycle of Siegel discs. In that case, the resulting map would have a $p$-cycle of Herman rings and retain an omitted value.

\section{Herman rings for rational maps}\label{rational}
We start this section by proving that a certain class of functions does not have any Herman rings.
\begin{theorem}
\label{nohr}
    If $f$ is a polynomial (or an entire function), then $f$ has no Herman rings.
\end{theorem}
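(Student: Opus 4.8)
The plan is to argue by contradiction via the maximum modulus principle, exploiting the crucial feature of entire functions that they are holomorphic on all of $\mathbb{C}$ and in particular have no poles to obstruct that principle. Suppose $f$ is entire (a polynomial being a special case) and possesses a $p$-periodic Herman ring $H$. Set $g = f^p$, which is again entire, and recall from the definition that $g|_H$ is conformally conjugate to the irrational rotation $z \mapsto e^{2\pi i \alpha}z$ of the annulus $A_r$ via a conformal map $\phi : H \to A_r$.

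First I would fix a core curve of the ring. Taking $\gamma = \phi^{-1}(\{|z| = \rho\})$ for some $1 < \rho < r$, the rotation invariance of the circle $\{|z| = \rho\}$ yields $g(\gamma) = \gamma$, and hence $g^n(\gamma) = \gamma$ for every $n \geq 1$. Since $\gamma$ is a Jordan curve in $\mathbb{C}$, the Jordan curve theorem splits $\mathbb{C}\setminus\gamma$ into a bounded component $D$ and an unbounded one, with $\partial D = \gamma$. Because $\gamma$ is a core curve of the annulus $H$, it separates the two boundary components of $H$, so exactly one of them, say $\Gamma$, lies in $D$; and $\Gamma \subset \partial H \subseteq \mathcal{J}(f)$, so $D$ contains points of the Julia set. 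Verifying that $\Gamma$ lies \emph{strictly} inside $D$ (and not on $\gamma$) is exactly where the doubly connected structure of the Herman ring is used.

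Next I would run a uniform version of the maximum principle. For each $n$ the iterate $g^n$ is entire, hence holomorphic on the closed Jordan domain $\overline{D}$, so the maximum modulus principle gives $\max_{\overline{D}}|g^n| = \max_{\gamma}|g^n|$. The decisive observation is that $g^n(\gamma) = \gamma$ forces $\max_{\gamma}|g^n| = \max_{w \in \gamma}|w| =: R$, a finite constant \emph{independent of} $n$. Therefore $|g^n| \leq R$ on $D$ for all $n$, so $\{g^n\}$ is uniformly bounded on $D$ and thus normal there by Montel's theorem. Consequently $D$ lies in the Fatou set of $g$, which coincides with the Fatou set of $f$. This contradicts the presence of the Julia continuum $\Gamma \subset D$, completing the argument.

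The main obstacle, and the precise point where being entire is indispensable, is the holomorphy of every $g^n$ on all of $\overline{D}$: for a general rational or meromorphic map a pole inside $D$ would invalidate the maximum modulus estimate, and indeed Herman rings do occur there, so the argument must break down in that setting. It succeeds here exactly because an entire function has no poles in $\mathbb{C}$. A secondary point needing care is the topological claim that a core curve encloses exactly one of the ring's two Julia-set boundary components, which follows from $H$ being doubly connected. (For the polynomial case one could alternatively invoke that the basin of the superattracting fixed point at $\infty$ is completely invariant and apply the stated fact that a Herman ring cannot coexist with a completely invariant Fatou component; the maximum principle argument is preferable since it handles polynomials and transcendental entire maps uniformly.)
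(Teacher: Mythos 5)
Your argument is correct and is essentially the paper's proof: both fix an $f^p$-invariant Jordan curve $\gamma$ in the ring, use the absence of poles to conclude that the forward $f^p$-orbit of the bounded complementary component $B(\gamma)$ remains trapped in a bounded set, and derive a contradiction from the fact that $B(\gamma)$ contains Julia points. The only difference is the final step: the paper invokes the blow-up property (density of $\bigcup_{n}f^{np}(B(\gamma))$ in $\widehat{\mathbb{C}}$), whereas you obtain normality of $\{f^{np}\}$ on $B(\gamma)$ directly from the uniform bound via the maximum modulus principle and Montel's theorem; both conclusions are valid.
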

\begin{proof}
    Let $f$ be a polynomial (or an entire function), and if possible, let $H$ be a $p$-periodic Herman ring of $f$. Also, let $\gamma$ be an $f^p$-invariant Jordan curve of $H$. Since $f^p(\gamma)=\gamma$, either $f^p(B(\gamma))=B(\gamma)$ or $f^p(B(\gamma))=\widehat{\mathbb{C}}\setminus (B(\gamma)\cup \gamma)$. The map  $f^p$ is a polynomial (or an entire function); hence, $B(\gamma)$ does not contain any pole of $f^p$. Thus $f^p(B(\gamma))=B(\gamma)$ giving that $f^{np}(B(\gamma))=B(\gamma)$ for $n\in \mathbb{N}$. This is not possible as $B(\gamma)\cap J(f)\neq \emptyset$ giving that $\cup_{n\in \mathbb{N}}f^{np}(B(\gamma))$ is dense in $\widehat{\mathbb{C}}$. This concludes that $f$ does not have any Herman ring. 
\end{proof}

Though polynomials do not have Herman rings, rational functions with poles can have Herman ring. Lyubich has studied the problem of the existence of Herman rings of a rational map \cite{lm} in $1986$, who considers the family $\{z\mapsto 1+\frac{1}{wz^2}:w\in \mathbb{C}\setminus \{0\} \}$. He asked whether the maps in this family have Herman rings. Shishikura \cite{smm} in $1987$, using quasiconformal surgery and careful analysis of the behavior of critical points and fixed points, proved that the number of Herman rings can be at most $d-2$ for a rational function of degree $d$. This gives that the  quadratic rational maps have no Herman rings, which answers Lyubich's question as a particular case. Therefore, if a rational map has a Herman ring, its degree is at least three. Figure \ref{fighr} shows the Herman ring of the cubic rational function $f(z)=e^{2\pi i \frac{\sqrt{5}-1}{2}}\frac{z^2(z-4)}{1-4z}$.

\begin{figure}[H]
    \centering
    \includegraphics[width=13.5cm,height=8.5cm,angle=0]{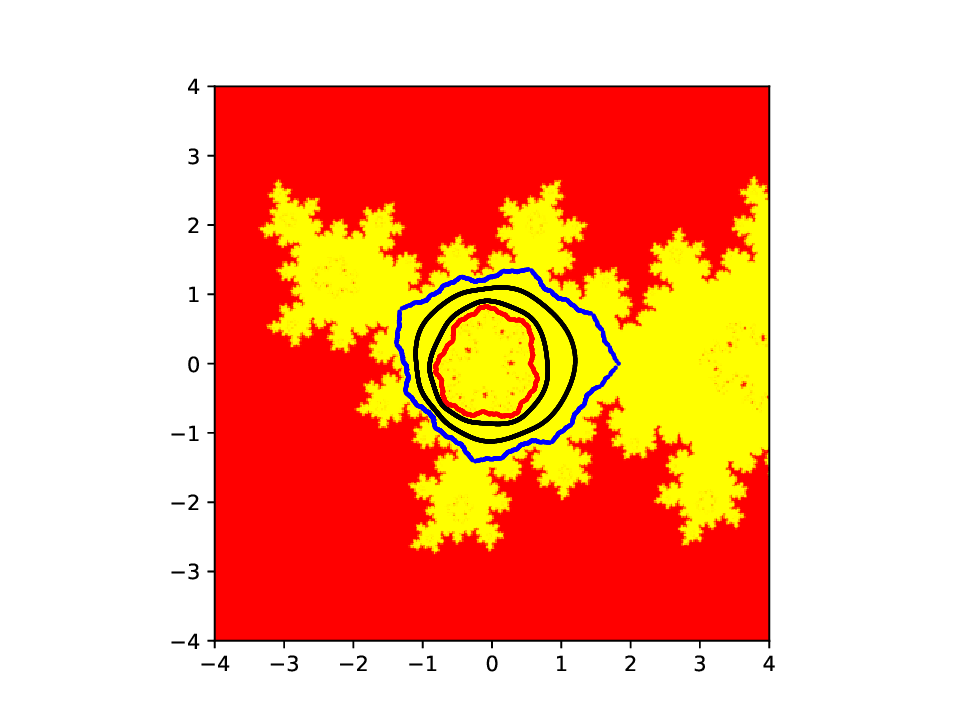}
    \caption{Herman ring of the function $e^{2\pi i\frac{\sqrt{5}-1}{2}}\frac{z^2(z-4)}{1-4z}$. The blue curve is the orbit of the critical point $\frac{19+\sqrt{105}}{16}$ and the red curve is the orbit of the critical point $\frac{19-\sqrt{105}}{16}$. The Herman ring is the yellow region lying between these two curves. The black curves are some invariant Jordan curves in the Herman ring.}
    \label{fighr}
\end{figure}
\par
\subsection{Singular values and Herman rings}

The singular values of a function $f$ are defined as the closure of the union of its critical values, that is, the images of points where $f'(z)=0$, and its asymptotic values, which are finite limits of $f(z)$ along curves tending to infinity (see \cite{berg93} for precise definition). The forward orbits of these singular values form the postsingular set, whose closure plays a central role in determining the global dynamics. Note that rational functions do not have any asymptotic value. A fundamental theorem of Fatou--Julia theory asserts that every periodic Fatou component is associated with the orbit of at least one singular value. Attracting and parabolic Fatou components necessarily contain singular values in their basins, and their stability arises from the local contraction induced by these singular orbits. 
For transcendental entire functions, additional components such as Baker domains, where all orbits tend to infinity, and wandering domains, which never repeat under iteration, may occur; although these components may not contain singular values, their existence and evolution are still constrained by the accumulation behavior of the postsingular set. 
In contrast, rotation domains-Siegel discs and Herman rings- are distinguished by the complete absence of singular values in their interiors. 
Singular values instead accumulate on the boundary of a rotation domain, where they prevent global linearization and determine the arithmetic conditions under which such domains exist. 
This boundary-driven interaction makes rotation domains (whose existence is highly sensitive to the distribution of the orbits of singular values) fundamentally different from all other Fatou components.

The dynamics of a rational map is dominated by its critical orbits. The boundary of the Herman ring is accumulated by at least one of the critical orbits in the Julia set. First, we define the $\omega$-limit set of a point. For  $c\in \mathbb{C}$ and a rational function $f$, the $\omega$-limit set of $c,$ denoted by $\omega(c)$, is defined by
$\omega(c)=\{z\in\widehat{\mathbb{C}}:$ there exists a subsequence $\{n_k\}$ such that $f^{n_k}(c)\to z\ as\ k\to \infty\}$.  The point $c$ is called a recurrent point of $f$ if $c\in\omega(c)$. The points that are not recurrent are called non-recurrent points. If a critical point is recurrent, then it is called a recurrent critical point. The image of a recurrent critical point is referred to as a recurrent critical value. Note that a recurrent point $a$ is in the Fatou set if and only if $a$ is in the Herman ring or the  Siegel disk. Consequently, if $a$ is a recurrent critical point of $f$, then $a$ is in the Julia set of $f$ as rotational domains do not contain any critical point. 
\begin{lemma}
For a rational function $f$, if $\Gamma$ is the boundary of a Siegel disk or a connected component of the boundary of a Herman ring, then there exists a recurrent critical point $c$ such that $\Gamma\subset \omega(c)$ \cite{mane}.	
\end{lemma}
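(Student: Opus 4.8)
The plan is to reduce the statement to a single compact forward-invariant subset of $J(f)$ and then to invoke Ma\~n\'e's hyperbolicity dichotomy, deducing the conclusion by excluding the expanding alternative. Let $p$ be the period of the rotation domain. If $S$ is the Siegel disk, take $\Gamma=\partial S$; if $H$ is the Herman ring, take $\Gamma$ to be one connected component of $\partial H$. In either case $f^{p}(\Gamma)=\Gamma$ (for the Herman ring, $f^{p}$ preserves orientation and hence does not interchange the two boundary components), so $\Lambda=\Gamma\cup f(\Gamma)\cup\cdots\cup f^{p-1}(\Gamma)$ is a nonempty compact $f$-invariant subset of $J(f)$. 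It therefore suffices to prove that $\Lambda$ meets the $\omega$-limit set of a recurrent critical point and then to upgrade this to $\Lambda\subset\omega(c)$, which gives $\Gamma\subset\omega(c)$.

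The engine is Ma\~n\'e's theorem \cite{mane}: a compact $f$-invariant set contained in $J(f)$ that has no critical point, no parabolic periodic point, and is disjoint from $\omega(c)$ for every recurrent critical point $c$ must be expanding, with the expansion persisting on a neighborhood. First I would check that $\Lambda$ contains no parabolic periodic point, since a parabolic cycle carries its own attracting petals and cannot lie on the boundary of a rotation domain. Assuming, for contradiction, that $\Lambda$ avoids $\omega(c)$ for every recurrent critical point $c$ (and, for the moment, that $\Lambda$ carries no critical point), Ma\~n\'e's dichotomy would make $\Lambda$ expanding. But $f^{p}$ is conformally conjugate, via some $\phi$, to the irrational rotation $R_{\alpha}(w)=e^{2\pi i\alpha}w$ on the rotation domain, and the curves $C_{r}=\phi^{-1}(\{|w|=r\})$ are $f^{p}$-invariant Jordan curves accumulating on $\Gamma$ as $r\to 1^{-}$. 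On each $C_{r}$ the dynamics is conjugate to $R_{\alpha}$, so every forward orbit remains on $C_{r}$ and hence in a fixed neighborhood of $\Gamma$, never being driven away. This contradicts the uniform expansion near $\Gamma$, so $\Lambda$ must intersect $\omega(c)$ for some recurrent critical point $c$.

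Finally I would upgrade $\Lambda\cap\omega(c)\neq\emptyset$ to $\Lambda\subset\omega(c)$. As $\omega(c)$ is closed and $f$-invariant, $\omega(c)\cap\Lambda$ is a nonempty closed $f$-invariant subset of $\Lambda$; using that $f$ on $\Lambda$ is semiconjugate to the minimal rotation $R_{\alpha}$ through the linearizing coordinate (or via prime ends), one forces this subset to be all of $\Lambda$, giving $\Lambda\subset\omega(c)$ and hence $\Gamma\subset\omega(c)$. The critical-point case set aside above is handled by the same rotation structure: a critical point on $\Lambda$ has its $\omega$-limit set contained in the invariant $\Lambda$, and the transitivity used in the upgrade makes it recurrent with $\omega(c)=\Lambda$. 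I expect the main obstacle to be the core contradiction step, namely converting Ma\~n\'e's a priori non-uniform, inverse-branch expansion into a statement that genuinely conflicts with the neutral rotation dynamics on the accumulating curves $C_{r}$; the upgrade is delicate too, because when $\Gamma$ is not a Jordan curve the required transitivity of $f$ on $\Lambda$ must be extracted from the semiconjugacy to $R_{\alpha}$ rather than taken for granted.
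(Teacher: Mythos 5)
The paper offers no proof of this lemma: it is quoted verbatim from Ma\~n\'e \cite{mane}, so there is no in-text argument to compare against, and your proposal has to be judged against Ma\~n\'e's own derivation. Your high-level strategy is the right one -- form the compact forward-invariant set $\Lambda=\Gamma\cup f(\Gamma)\cup\cdots\cup f^{p-1}(\Gamma)$, invoke Ma\~n\'e's dichotomy, and kill the expanding alternative using the neutral rotation dynamics on the curves $C_r$ accumulating on $\Gamma$. The contradiction step you flag as the main obstacle is actually the part that closes cleanly: from $f^{np}=\phi^{-1}\circ R_{n\alpha}\circ\phi$ one gets $|(f^{np})'(z)|=|(\phi^{-1})'(R_{n\alpha}\phi(z))|\cdot|\phi'(z)|$ for $z\in C_r$, which is bounded in $n$ because $R_{n\alpha}\phi(z)$ stays on the compact circle $\{|w|=r\}$; expansion on a neighborhood of $\Lambda$ forces $|(f^{np})'|\geq C\lambda^{np}$ along any orbit that never leaves that neighborhood, and $C_r$ lies in such a neighborhood for $r$ close enough to the relevant end. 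That is a genuine contradiction, not merely a heuristic about points ``not being driven away.''

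The genuine gaps are elsewhere. First, the upgrade from $\Lambda\cap\omega(c)\neq\emptyset$ to $\Gamma\subset\omega(c)$ rests on transitivity or minimality of $f^{p}$ on $\Gamma$, extracted ``from the linearizing coordinate or via prime ends.'' Neither is available: $\phi$ lives on the open rotation domain and need not extend to $\Gamma$ (Siegel disk boundaries need not be locally connected), and the prime-end semiconjugacy to $R_\alpha$ controls impressions, not individual boundary points, so a nonempty closed $f^p$-invariant subset of $\Gamma$ need not be all of $\Gamma$; minimality of the boundary dynamics is not known in general. Second, the hypothesis of the global dichotomy -- no critical point on $\Lambda$ -- typically fails here: in the standard examples (including the figures in this paper) the boundary of the Herman ring is precisely the closure of a critical orbit, and your fallback for that case leans on the same unavailable transitivity. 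Ma\~n\'e's actual route avoids both problems by being pointwise: his local theorem shows that any $x\in J(f)$ that is neither a parabolic periodic point nor in $\omega(c)$ for any recurrent critical point $c$ admits arbitrarily small neighborhoods all of whose pullback components shrink, and he then shows no point of $\Gamma$ can have this property because backward iterates of a neighborhood of $x\in\Gamma$ keep containing pieces of the rotation domain of definite size. This yields that every non-parabolic point of $\Gamma$ lies in some $\omega(c)$; since there are finitely many parabolic periodic points, $\Gamma$ is perfect and each $\omega(c)$ is closed, one gets $\Gamma\subset\bigcup_c\omega(c)$, and the reduction to a single $c$ is a further step in \cite{mane} that does not pass through minimality of $f|_\Gamma$. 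So your outline captures the spirit of the proof, but as written the final containment -- which is the actual content of the lemma -- is not established.
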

 One can believe that a rational map has no Herman rings provided it has relatively simple critical orbits. Recall that the forward orbit of a point $z$ under the map $f$ is defined as $\{f^{n}(z): n\in \mathbb{N}\cup \{0\}\}$. It is infinite if the cardinal number of the set is not finite. Yang showed the following result \cite{yang2017rational} in $2017$.
\begin{theorem}\label{noherman}
	Let $f$ be a rational map with degree at least two. If there exists at most one infinite critical orbit in the Julia set of $f$ essentially, then $f$ has no Herman rings.
\end{theorem}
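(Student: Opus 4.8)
The plan is to prove the contrapositive: if $f$ has a Herman ring, then $f$ carries at least two genuinely different infinite critical orbits inside its Julia set, contradicting the hypothesis. So suppose $H$ is a $p$-periodic Herman ring. Being doubly connected, $H$ has exactly two boundary components $\Gamma_1$ and $\Gamma_2$, and, as recalled in the introduction, these lie in two \emph{distinct} connected components of the Julia set. Each $\Gamma_i$ is connected, so it sits in a single Julia component; iterating $f$ cyclically permutes the two chains of $p$ components through which $\Gamma_1$ and $\Gamma_2$ travel. Grouping each chain into a single set, I obtain two disjoint, closed, forward-invariant families $\mathcal{C}_1 \supseteq \Gamma_1$ and $\mathcal{C}_2 \supseteq \Gamma_2$ with $f(\mathcal{C}_i)=\mathcal{C}_i$; their disjointness comes from the fact that the inner and outer boundaries of each Herman ring in the cycle are separated by a Fatou annulus. (In the invariant case $p=1$ this is just $f(\Gamma_1)=\Gamma_1$, $f(\Gamma_2)=\Gamma_2$, using that an orientation-preserving rotation of the annulus cannot interchange its two ends.)

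First I would apply the Lemma of Ma\~n\'e above to each boundary component separately: there exist recurrent critical points $c_1,c_2$ with $\Gamma_1 \subseteq \omega(c_1)$ and $\Gamma_2 \subseteq \omega(c_2)$. Two consequences are then immediate. Each $c_i$ lies in $J(f)$, because a recurrent point of the Fatou set must lie in a rotation domain and rotation domains contain no critical point; hence the full forward orbit of $c_i$ stays in the forward-invariant set $J(f)$. Moreover each such orbit is infinite: $\omega(c_i)\supseteq\Gamma_i$ is uncountable, whereas a finite (eventually periodic) orbit has a finite $\omega$-limit set. Thus each $c_i$ produces an infinite critical orbit contained in $J(f)$.

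The heart of the argument is to show that $c_1$ and $c_2$ determine essentially different orbits, so that there are at least two infinite critical orbits in $J(f)$. I would argue by contradiction: if the forward orbits of $c_1$ and $c_2$ eventually coincided, then $\omega(c_1)=\omega(c_2)=:\Omega$ would be a single forward-invariant closed set with $\Gamma_1\cup\Gamma_2\subseteq\Omega$, so $\Omega$ meets both $\mathcal{C}_1$ and $\mathcal{C}_2$. I would then use that $\mathcal{C}_1,\mathcal{C}_2$ are closed, disjoint, and satisfy $f(\mathcal{C}_i)=\mathcal{C}_i$: once an orbit point lies in $\mathcal{C}_1$ it can never leave, so if the common orbit were eventually in $\mathcal{C}_1$ we would get $\Omega\subseteq\mathcal{C}_1$, forcing $\Gamma_2\subseteq\mathcal{C}_1$ and contradicting $\mathcal{C}_1\cap\mathcal{C}_2=\emptyset$ (and symmetrically for $\mathcal{C}_2$). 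This contradiction shows the orbits of $c_1$ and $c_2$ are distinct.

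The step I expect to be the main obstacle is precisely this last one: controlling the global itinerary of a single orbit among the Julia components. Mere accumulation of the orbit on $\Gamma_1\subseteq\mathcal{C}_1$ does not by itself place orbit points inside $\mathcal{C}_1$, since a priori other Julia components could cluster against $\Gamma_1$; the delicate point is to promote \emph{accumulation on $\Gamma_1$} to \emph{eventual entry into $\mathcal{C}_1$}, after which invariance and closedness finish the job. I would handle this by analysing the local dynamics on the side of $\Gamma_1$ facing the bounded complementary component $B(\Gamma_1)$, exploiting that $f$ preserves the two sides of the invariant curve $\Gamma_1$ and that $J(f)\cap\overline{B(\Gamma_1)}$ is separated from the points of $J(f)$ near $\Gamma_2$ by the Fatou annulus $H$; combined with $f(\mathcal{C}_i)=\mathcal{C}_i$, this should confine the tail of the orbit to one family. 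Everything else---Ma\~n\'e's lemma, the location of recurrent critical points in $J(f)$, and the uncountability argument for infiniteness---is routine.
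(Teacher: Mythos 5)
The survey does not actually reproduce a proof of this theorem --- it states the result and cites Yang --- so your argument has to be judged on its own terms. Your opening moves are correct and are the natural ones: applying Ma\~n\'e's lemma to each boundary component, locating the recurrent critical points $c_1,c_2$ in $J(f)$, and deducing that their orbits are infinite because $\omega(c_i)$ contains an uncountable set. The genuine gap is exactly where you flag it, and it is not a repairable technicality within the framework you set up. You need the forward orbit of $c_1$ to eventually \emph{enter} the forward-invariant family $\mathcal{C}_1$ of Julia components, but Ma\~n\'e's lemma only gives that the orbit \emph{accumulates} on $\Gamma_1\subseteq\mathcal{C}_1$. Orbit points sufficiently close to $\Gamma_1$ do lie in the complementary component $E_1$ of $\widehat{\mathbb{C}}\setminus H$ containing $\Gamma_1$ (the two complementary components are compact and disjoint, hence at positive distance), but they need not lie in $\mathcal{C}_1$: uncountably many other Julia components can accumulate on $\Gamma_1$ from the $E_1$ side. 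If you retreat to the sets $E_1,E_2$ themselves --- which the orbit demonstrably visits infinitely often --- you lose invariance: $E_1$ meets $J(f)$, so $f^n(E_1)$ eventually covers $\widehat{\mathbb{C}}$, and nothing prevents a single orbit from hopping between $E_1$ and $E_2$ forever. Your proposed repair, that ``$f$ preserves the two sides of the invariant curve $\Gamma_1$,'' is true only locally on the ring, where $f^p$ is injective and conjugate to a rotation; globally it fails because $f$ is a branched covering of degree at least $3$. (A secondary issue: for $p\geq 2$ the disjointness of the two chains $\mathcal{C}_1,\mathcal{C}_2$ is also not automatic, since a boundary component of $H_j$ and a boundary component of $H_k$ with $j\neq k$ may lie in the same Julia component in nested configurations.)

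Put differently, the sentence ``if the orbits coincide then $\omega(c_1)=\omega(c_2)$ contains both $\Gamma_1$ and $\Gamma_2$, contradiction'' contains no contradiction yet: $J(f)$ is itself a closed forward-invariant set containing both boundary components, so an $\omega$-limit set containing both is not absurd on the face of it. The whole content of the theorem is the separation statement that one recurrent critical orbit cannot serve both boundary components of a Herman ring, and that is precisely the step your proposal leaves open. Whatever route one takes to close it --- Yang's analysis of the critical orbits relative to the invariant curves of the cycle, or a surgery reduction replacing the ring by Siegel discs and invoking a Fatou--Shishikura-type count --- genuinely new input beyond Ma\~n\'e's lemma is required at exactly this point, so the proposal as written does not prove the theorem.
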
 

Here, \textquotedblleft essentially" means if $c_1$ and  $c_2$ are two critical points with infinite orbits, then the forward orbit of $c_1$ and that of $c_2$ are disjoint. As an immediate corollary of the above theorem, we have the rational map $f_w(z)=1+\frac{w}{z^d}$ has no Herman rings for $d\geq 2$ and $w\in \mathbb{C}\setminus\{0\}$. This is so as the set of critical points of $f_w(z)=1+\frac{w}{z^d}$ is $\{0, \infty\}$. But $f_w$ has essentially only one critical orbit $0\mapsto \infty \mapsto 1\mapsto 1+w\mapsto \cdots $. Hence, by Theorem \ref{noherman}, the function $f_w$ has no Herman rings. This was proved by  Bam\'{o}n and Bobenrieth  \cite{bb} in $1999$. 
In $2000$, Milnor established a result more general than that of \cite{bb}, proving that rational maps with exactly two critical points do not possess Herman rings \cite{Milnor2000}.
Xiao et al. \cite{ywq} in $2010$ considered the family of functions $f_{m,d}(z)= z^m+\frac{\lambda}{z^d}$, where $\lambda\in \mathbb{C}\setminus \{0\} \}$ and $m$, $d$ are integers such that $m, d \geq 2$. It can be seen than the number of critical points is more than $2$, in fact, it has $m+d$ many simple critical points along with $0$ and $\infty$. So, Theorem \ref{noherman} can not be used here. Using careful analysis of invariant Jordan curves and winding number of such curves, the following is proved.
\begin{lemma}
\label{morecri}
	The function $f_{m,d}$ has no Herman rings.
\end{lemma}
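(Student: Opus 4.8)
The plan is to analyze the function $f_{m,d}(z) = z^m + \frac{\lambda}{z^d}$ via its critical structure and the symmetry it exhibits. The key obstacle is that, unlike the family $f_w(z) = 1 + \frac{w}{z^d}$ which has essentially one critical orbit, here there are $m+d$ simple critical points in addition to $0$ and $\infty$, so Theorem \ref{noherman} cannot be applied directly. My approach would be to exploit the rotational symmetry of $f_{m,d}$: if $\zeta$ is a suitable root of unity, then conjugating by $z \mapsto \zeta z$ permutes the critical points and shows that their orbits are not independent but are related by the symmetry. This collapses the $m+d$ simple critical orbits into far fewer \emph{essentially distinct} orbits, which is the mechanism that should rule out Herman rings.

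First I would locate the critical points explicitly by solving $f_{m,d}'(z) = m z^{m-1} - d\lambda z^{-d-1} = 0$, giving $z^{m+d} = \frac{d\lambda}{m}$, so the $m+d$ finite nonzero critical points are the $(m+d)$-th roots of $\frac{d\lambda}{m}$, equally spaced on a circle and hence of the form $c_0 \omega^k$ where $\omega = e^{2\pi i/(m+d)}$ and $k = 0, 1, \dots, m+d-1$. Next I would establish the equivariance: setting $\zeta = \omega$ and checking how $f_{m,d}(\zeta z)$ relates to $\zeta^? f_{m,d}(z)$, one finds a functional equation of the form $f_{m,d}(\zeta z) = \zeta^{m} f_{m,d}(z)$ (after verifying the exponents work out, since $(\zeta z)^m = \zeta^m z^m$ and $\frac{\lambda}{(\zeta z)^d} = \zeta^{-d} \frac{\lambda}{z^d}$, and $\zeta^{-d} = \zeta^{m}$ because $\zeta^{m+d} = 1$). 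This semi-conjugacy relation means the symmetry maps critical orbits onto one another, so that all $m+d$ critical values lie in a single symmetric grand orbit up to rotation.

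The heart of the argument, following Xiao et al., is then the analysis of invariant Jordan curves and their winding numbers. Suppose for contradiction that $H$ is a $p$-periodic Herman ring with an $f^p$-invariant Jordan curve $\gamma$. By the lemma of Mañé quoted above, $\gamma$ is accumulated by a recurrent critical orbit. Using the symmetry, I would argue that the critical orbit accumulating on $\gamma$, together with its rotated copies, forces a contradiction with the winding number of $\gamma$ about $0$ and $\infty$: the curve $\gamma$ separates $0$ from $\infty$ (since both boundary components of a Herman ring carry Julia points and the symmetry fixes $0$ and $\infty$), and tracking how $f^p$ acts on the homology class of $\gamma$ while respecting the $\zeta$-symmetry yields an incompatible constraint. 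The \textbf{main obstacle} I anticipate is making this winding-number bookkeeping precise: one must carefully control the degree with which $f_{m,d}^p$ maps $B(\gamma)$ over itself and reconcile it with the rotational symmetry, since a naive argument only shows $f^p(B(\gamma)) = B(\gamma)$ without immediately producing a contradiction. The symmetry must be used to show that the two boundary circles of $H$ would have to be interchanged or that the winding number is forced to an impossible value, thereby excluding the Herman ring.
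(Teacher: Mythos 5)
There is a genuine gap here, and it sits exactly where you flag your ``main obstacle'': the contradiction is never actually derived. Note first that the paper itself does not prove Lemma \ref{morecri}; it cites Xiao et al.\ \cite{ywq} and describes their method only as a ``careful analysis of invariant Jordan curves and winding number of such curves,'' so your strategy is aligned in spirit with the cited proof, but what you have written is a plan whose decisive step is missing. Moreover, your case analysis is inverted. The case $f^{p}(B(\gamma))=B(\gamma)$ is \emph{not} the hard case needing extra work: since $B(\gamma)$ meets the Julia set, this is immediately contradictory by the blow-up property, exactly as in the proof of Theorem \ref{nohr}. That argument already disposes of the situation in which no curve in the orbit $\gamma, f(\gamma),\dots,f^{p-1}(\gamma)$ encloses the unique pole $0$ (then each $B(f^{j}(\gamma))$ is pole-free, maps onto the next bounded component, and the cycle closes up). The genuinely hard case, which your sketch never addresses, is when some $f^{j}(\gamma)$ does enclose $0$: then $f(B(f^{j}(\gamma)))$ contains $\infty$, the ``bounded side'' bookkeeping breaks, and one must exploit that both $0$ and $\infty$ lie in the attracting basin of the superattracting fixed point $\infty$ (so the curve $f^{j}(\gamma)$, lying in a Herman ring, separates two pieces of the Julia set while also separating two Fatou points $0$ and $\infty$), combined with a degree count of $f$ on $B(f^{j}(\gamma))$ coming from the order-$d$ pole at $0$ versus the injectivity of $f$ on a Herman ring. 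None of this is carried out, so no contradiction is reached.

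A secondary problem is your claim that the rotational symmetry ``collapses'' the $m+d$ critical orbits into essentially one, suggesting Theorem \ref{noherman} could then apply. The equivariance $f_{m,d}(\zeta z)=\zeta^{m}f_{m,d}(z)$ for $\zeta^{m+d}=1$ is correct, but ``essentially one critical orbit'' in Theorem \ref{noherman} means the forward orbits actually intersect (so their tails coincide); rotated copies of an orbit are in general pairwise disjoint and therefore count as distinct. Concretely, the critical values are $\lambda(1+d/m)c^{-d}$ over the $(m+d)$-th roots $c$ of $d\lambda/m$, giving $(m+d)/\gcd(d,m+d)\geq 2$ distinct critical values with generically disjoint orbits (e.g.\ $m=d=2$ gives the two values $\pm 2\sqrt{\lambda}$). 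So the symmetry cannot rescue Theorem \ref{noherman}, and the paper is explicit that this theorem does not apply here; the winding-number route is the only one available, and it must be completed rather than gestured at.
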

 Consider the one-parameter family of functions,   $S_\lambda(z)=\biggl(\frac{(z+\lambda-1)^d+(\lambda-1)(z-1)^d}{(z+\lambda-1)^d-(z-1)^d}\biggr)^d$
where the natural number $d\geq 2$ and $\lambda \neq 0 $ is a complex parameter \cite{yang2017rational}. This family of rational maps is the renormalization transformation of the generalized diamond hierarchical Potts model.  Qiao gives it \cite{Qi} in 2014. The special case named standard diamond lattice $(d=2)$ was first studied by Derrida et al. \cite{DSI} in $1983$. Using Theorem \ref{noherman}, it can be proved that the renormalization transformation $S_\lambda$ has no Herman rings.
The Blaschke product  $F_{a,\lambda}=\lambda z^2 \frac{z-a}{1-\bar{a}z}$ of degree $3$ was investigated by Henriksen \cite{ch} in $1997$. He showed that for an irrational rotation number $\alpha$ satisfying the Brjuno condition,  there exists a constant $a\geq 3$ and a continuous function $\lambda{\alpha}$ such that $F_{a,\lambda\alpha}$ possesses a Herman ring.

All the functions without Herman rings discussed in this section have only two or three critical points, except the maps given by Lemma \ref{morecri}. Since one critical value may corresponds to  many critical point,  one should focus on critical values instead of critical points. This leads to the natural question: Can a rational map with just two or three critical values have a Herman ring in its Fatou set? By applying the Riemann-Hurwitz formula, one can show that any rational map with exactly two critical values must also have only two critical points. Consequently, Milnor’s result \cite{Milnor2000} implies that such maps cannot exhibit Herman rings. In $2019$, Hu et al. \cite{hu} explored this question further, by focusing on rational maps with three critical values. The main objective of the paper was to investigate a specific class of rational maps with three critical values, known as regularly ramified rational maps. A rational map \( f \) is said to be regularly ramified if, for every point \( z \in \widehat{\mathbb{C}} \), all pre-images of \( z \) under \( f \) have the same local degree, i.e., \( f \) has equal ramification indices at all pre-images of any point. This uniform ramification imposes strong combinatorial restrictions on the map. A classical consequence of these restrictions is that regularly ramified rational maps necessarily have at most three critical values.
On the other hand, rational maps with exactly three critical values are highly constrained and arise naturally as quotients of regularly ramified maps. In fact, such maps correspond to branched coverings of the Riemann sphere with three branch points, and by the Riemann-Hurwitz formula these coverings must have uniform ramification over each critical value. As a result, rational maps with three critical values can be realized (up to Möbius conjugacy) as compositions or quotients of regularly ramified rational maps. The following result is established in \cite{hu}.

\begin{theorem}
    No regularly ramified rational map can have a Herman ring in its Fatou set.
\end{theorem}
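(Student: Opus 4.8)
The plan is to split the argument according to the number of critical values, which for a regularly ramified map is at most three. A regularly ramified $f$ of degree $d$ carries a nontrivial finite group $G$ of Möbius transformations (its deck group) with $f\circ g=f$ for all $g\in G$ and $|G|=d$; over each critical value $v_i$ all $d/m_i$ preimages are critical of the common local degree $m_i$, so the critical set is exactly $f^{-1}(V)$, where $V$ is the set of critical values. The Riemann--Hurwitz relation $\sum_i (1-1/m_i)=2-2/d$ shows that $f$ has either two or three critical values. If there are two, then $\sum_i d/m_i=2$ forces $m_1=m_2=d$, so $f$ has exactly two (totally ramified) critical points, and Milnor's theorem \cite{Milnor2000} already rules out Herman rings. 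I would therefore concentrate on the three--critical--value case.

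Here the same relation forces $(m_1,m_2,m_3)$ to be one of the spherical triples $(2,2,n)$, $(2,3,3)$, $(2,3,4)$, $(2,3,5)$, i.e. $G$ is dihedral or one of the polyhedral groups. First I record the consequences of $f\circ g=f$: since $f^n\circ g=f^n$, the sets $J(f)$ and $F(f)$ are $G$-invariant and $G$ permutes the Fatou components, while each fibre $f^{-1}(v_i)$ is a single $G$-orbit. Suppose, for contradiction, that $f$ has a $p$-periodic Herman ring $H$. Because $f$ is injective on a Herman ring, $f^p|_H$ is a conformal automorphism; combined with $f^p\circ g=f^p$ this makes the stabiliser $\{g\in G:g(H)=H\}$ trivial, for if $g(H)=H$ then $f^p(g(z))=f^p(z)$ with $g(z),z\in H$ forces $g|_H=\mathrm{id}$ and hence $g=\mathrm{id}$. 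Consequently the $G$-orbit of $H$ consists of $d$ pairwise disjoint topological annuli, each mapped by $f^p$ conformally (degree one) onto $H$, and together exhausting $f^{-p}(H)$ at degree one.

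Next I invoke Mañé's lemma \cite{mane}: each of the two boundary components $\Gamma_1,\Gamma_2$ of $H$ lies in $\omega(c)$ for some recurrent critical point $c\in J(f)$; since $\omega(c)=\omega(f(c))$ and $f(c)\in V$, the forward orbit of at least one critical value is dense in $\Gamma_1$, and likewise for $\Gamma_2$. Thus both complementary disks of $H$ must absorb an infinite, recurrent critical orbit drawn from the pool of only three critical values. To finish I would fix an $f^p$-invariant Jordan curve $\gamma\subset H$; its full preimage is the $G$-symmetric family $\{g(\gamma)\}_{g\in G}$ of $d$ disjoint curves cutting the sphere, each sent to $\gamma$ by $f^p$ with degree one. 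Feeding this symmetric configuration, together with the distribution of the three critical values between the two disks bounded by $\gamma$, into a Riemann--Hurwitz and winding-number count for the preimages of those disks --- exactly the invariant-curve technique already used for Lemma \ref{morecri} --- should contradict the requirement that both boundary components carry recurrent critical orbits. An alternative endgame is to prove directly that at most one of the three critical-value orbits can be infinite and contained in $J(f)$, the other two being captured by the Fatou set or eventually merging; this would place $f$ under the hypothesis of Theorem \ref{noherman} and conclude immediately.

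The main obstacle is precisely this last step: converting the rigid $G$-symmetry and the bound of three critical values into a genuine contradiction with Mañé's two-sided recurrence. The delicate points are that $\omega(v_i)$ need not be $G$-invariant, so one cannot simply transport recurrence across the $d$ symmetric copies of $H$, and that the period-$p$ bookkeeping must be carried out for $f$ itself rather than for $f^p$, since $f^p$ is in general not regularly ramified; accordingly one must track the whole cycle $H,f(H),\dots,f^{p-1}(H)$ together with its $G$-orbit. I expect the winding-number and Riemann--Hurwitz accounting for the symmetric preimage configuration to be where the real work lies.
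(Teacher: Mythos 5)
Your setup is sound and matches the structure theory that any proof must start from: a regularly ramified map of degree $d$ is the quotient map of a finite M\"obius group $G$ with $f\circ g=f$ and $|G|=d$, Riemann--Hurwitz forces two or three critical values, and the two-value case reduces to two totally ramified critical points, which Milnor's theorem \cite{Milnor2000} dispatches. The invariance of $F(f)$ and $J(f)$ under $G$, the triviality of the stabiliser of a Herman ring (via injectivity of $f^p$ on the ring), and the appeal to Ma\~n\'e's lemma for recurrent critical orbits on both boundary components are all correct observations. But the theorem's entire content lies in the three-critical-value case, and there you do not prove anything: you state that a Riemann--Hurwitz/winding-number count on the $G$-symmetric preimage configuration ``should'' contradict two-sided recurrence, and you yourself identify this as the main obstacle. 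That is a plan, not a proof. Note also that the survey you are being compared against gives no proof either --- it only cites \cite{hu} --- so the missing step is precisely the substance of the cited work, not a routine verification.

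Two specific points in the sketch need repair even as a plan. First, the claim that the $d$ annuli $\{g(H)\}_{g\in G}$ ``together exhaust $f^{-p}(H)$ at degree one'' is false for $p\geq 2$: $f^{-p}(H)$ carries total degree $d^{\,p}$ over $H$, while the $G$-orbit accounts for only $d$, so there are many other preimage components whose location relative to the invariant curves is exactly what your winding-number count would have to control. Second, the ``alternative endgame'' of reducing to Theorem \ref{noherman} is not automatic: Ma\~n\'e's lemma supplies a recurrent critical point for each boundary component, but both components may lie in $\omega(c)$ for the \emph{same} critical point, so the presence of a Herman ring does not by itself force two essentially distinct infinite critical orbits in $J(f)$. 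You would need an independent argument that at most one of the three critical values can have an infinite orbit in the Julia set, and no such argument is offered.
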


\subsection{Configurations of Herman rings}

Once the existence of cycles of Herman rings is established, the question of possible configurations arises. By configurations of Herman rings, we mean the location of $H$-relevant poles (poles which are enclosed by a ring), the location of Herman rings in the complex plane,  and the mapping pattern of the Herman rings. We introduce the concept of nested Herman rings.
\begin{definition}{\bf{($H$-Maximal nest)}} 
	Given a Herman ring H, a ring $H_j$ is called an $H$-outermost ring if $H_j$ is not contained in $B(H_i)$ for any $i$, $i\neq j$. Given an $H$-outermost ring $H_j$, the collection of rings consisting of $H_j$ and all $H_i$ such that $H_i \subset B(H_j)$ is called an $H$-maximal nest.
\end{definition}
In other words, an $H$-maximal nest is a sub-collection of Herman rings from the periodic cycle of $H$ such that there exists a ring $H_i$ such that $B(H_i)$ contains all the rings in the sub-collection. The number of $H$-maximal nest can be any natural number less than or equal to the period of $H$, and each $H$-maximal nest corresponds to an $H$-outermost ring. Figure \ref{figch22} demonstrates the configuration in detail whenever the function has an omitted value. The concept of an $H$-maximal nest helps us to classify the possible arrangements of a $p$-periodic cycle of Herman rings. 
\begin{definition}{\bf{(Nested, strictly nested and strictly non-nested)}}
	A $p$-periodic cycle of Herman rings with $p>1$ is called nested if there is a $j$ such that $H_i \subset B(H_j)$ for all $i \neq j$. It is called strictly nested if for each $i\neq j$, either $H_i \subset B(H_j)$ or $H_j \subset B(H_i)$. It is strictly non-nested if $B(H_i)\cap B(H_j)= \phi$ for all $i \neq j$.
\end{definition}

There is extensive work by Shishikura \cite{smm,ms,mss} concerning the configuration of Herman rings of rational functions. For a given configuration of rings, he defined an associated abstract tree. He showed that, for any tree satisfying certain conditions, one can construct a rational map with a cycle of Herman rings realizing such a tree. But, the compact form a function was not mentioned there.  In $2021$, the explicit formula of a family of rational maps with a $2$-cycle of nested Herman rings is described by Yang \cite{yang} in the following theorem.
\begin{theorem}
	For any Brjuno number $\theta$, there exists $r,s\in (0,1)$ such that $e^{2\pi i s}\frac{z-r^2e^{-2\pi is}}{1-r^2z e^{2\pi is}}({\frac{z-\frac{1}{r}}{1-\frac{z}{r}}})^3$ has a $2$-cycle of nested Herman rings of rotation number $\theta$.
\end{theorem}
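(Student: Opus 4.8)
The plan is to reduce the statement to producing a single Herman ring by exploiting a reflection symmetry of $f$, and then to realize that ring by Shishikura's quasiconformal surgery starting from a cycle of Siegel discs.

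First I would record the functional equation $f(1/\bar z)=1/\overline{f(z)}$. This is checked factor by factor: $|e^{2\pi i s}|=1$; the inner factor $\frac{z-r^2e^{-2\pi is}}{1-r^2e^{2\pi is}z}$ is a genuine Blaschke factor (its zero has modulus $r^2<1$) and hence satisfies $b(1/\bar z)=1/\overline{b(z)}$; and the same identity holds for $\frac{z-1/r}{1-z/r}$, whose finite zero $1/r$ and pole $r$ are reflections of one another across $S^1$. Consequently $S^1$ is forward invariant (if $|z|=1$ then $\sigma(z)=z$, so $|f(z)|=1$) and $f$ commutes with the involution $\sigma(z)=1/\bar z$. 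A direct computation gives $f(0)=1/r$ and $f(\infty)=r$, so $f$ carries the interior of $S^1$ into the exterior and vice versa. It follows that it is enough to exhibit one Herman ring $H\subset\mathbb{D}$ for $f^2$: the reflected set $\sigma(H)$ is then automatically a Herman ring lying outside $\mathbb{D}$, the pair $\{H,\sigma(H)\}$ is a $2$-cycle for $f$, and it is nested because $H\subset\mathbb{D}\subset B(\sigma(H))$. As a consistency check, $f$ has degree $4$, so by Shishikura's bound it can carry at most $4-2=2$ rings, exactly the number produced.

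Next I would observe that the naive ``restrict to the invariant circle'' argument used for the single-ring examples of Fagella--Geyer and Henriksen is unavailable here: computing winding numbers, the inner factor contributes degree $+1$ and the cubed outer factor degree $-3$, so $f|_{S^1}$ has degree $-2$ and is \emph{not} a circle diffeomorphism. The invariant curves of the rings are therefore genuine Jordan curves interior to $\mathbb{D}$, not round circles, and must be located indirectly. Since $\theta$ is a Brjuno number, there is a reflection-symmetric rational model $f_0$ carrying a $2$-cycle of Siegel discs of rotation number $\theta$ (an indifferent periodic point of multiplier $e^{2\pi i\theta}$ linearizes because $\theta$ is Brjuno). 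I would then run the surgery of Section \ref{quasi}, carried out $\sigma$-equivariantly, gluing rigid-rotation behaviour across an invariant annulus; the resulting map is quasiregular and holomorphic off the relevant preimages, so by the Fundamental Lemma (Theorem \ref{fl}) together with the Measurable Riemann Mapping Theorem it is quasiconformally conjugate to a rational map with a nested $2$-cycle of Herman rings of rotation number $\theta$.

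Finally I would have to pin the conjugated map to the stated normal form. The symmetric family is genuinely two-real-parameter, matching the two conditions to be met: $r\in(0,1)$ should fix the conformal moduli of the rings and the placement of the critical values (so that each critical orbit accumulates on a boundary circle and stays out of $H$, guaranteeing $H$ is a ring and not a disc), while $s\in(0,1)$ should be tuned so that the rotation number of $f^2$ along the candidate invariant curve equals $\theta$; the monotone dependence of this rotation number on the phase, together with the analytic-linearization theorem for Brjuno rotation numbers, yields the conjugacy to a rigid rotation and hence the invariant annulus. The hard part is exactly this explicit identification: showing the surgered map normalizes, via a Möbius change of coordinates respecting $\sigma$, to precisely $e^{2\pi i s}\frac{z-r^2e^{-2\pi is}}{1-r^2ze^{2\pi is}}\big(\frac{z-1/r}{1-z/r}\big)^3$ for suitable $(r,s)$. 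This is the step that Shishikura's abstract-tree description leaves implicit (``the compact form of a function was not mentioned there''), and controlling the surgery tightly enough to recover a rational map of this exact shape, while simultaneously meeting both the geometric and the rotation-number constraints, is where the main technical work lies.
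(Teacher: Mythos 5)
The first thing to say is that the survey contains no proof of this statement: it is quoted from Yang \cite{yang}, so the only in-paper material to measure you against is the general surgery scheme of Section \ref{quasi}. Your outline is consistent with that scheme (reflection symmetry in $S^1$ plus a Shishikura-type surgery converting Siegel discs into rings), and your elementary computations are correct: the factors do satisfy $b(1/\bar z)=1/\overline{b(z)}$, $f(0)=1/r$, $f(\infty)=r$, $\deg f=4$, and $\deg (f|_{S^1})=-2$. However, one intermediate claim is actually false: $f$ does \emph{not} carry $\mathbb{D}$ into the exterior of $\overline{\mathbb{D}}$, and this cannot follow from $f(0)=1/r$ alone. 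Because $\deg f=4$ while $f|_{S^1}$ has degree $-2$, a generic point $w\in S^1$ has exactly two preimages on $S^1$ and two further preimages forming a $\sigma$-orbit, one inside and one outside the unit circle; hence $f(\mathbb{D})$ meets $S^1$ and therefore meets both complementary components. This breaks the step where you conclude that an $f^2$-invariant ring $H\subset\mathbb{D}$ satisfies $f(H)\neq H$ and $f(H)=\sigma(H)$; the $2$-cycle and the nesting must be read off from the construction itself rather than from this separation argument. (Your ``consistency check'' also conflates Shishikura's bound on the number of Herman-ring \emph{cycles} with a bound on the number of rings.)

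The decisive gap, though, is the one you flag yourself: you never show that the rational map produced by the Measurable Riemann Mapping Theorem is, after a M\"obius normalization, a member of this specific two-parameter family for some $r,s\in(0,1)$ realizing rotation number $\theta$. That step is not a technical afterthought --- it is the entire content of the theorem. The abstract existence of nested $2$-cycles of Herman rings was already available from Shishikura's tree constructions, as the survey itself notes when it says ``the compact form of a function was not mentioned there''; what Yang's theorem adds is precisely the explicit formula. A complete proof must control the covering data of the surgered map (its degree, critical portrait, the $\sigma$-symmetry, and the placement of the distinguished zeros and poles) tightly enough to force the straightened map into the stated normal form, and then solve simultaneously for $r$ (geometry of the rings and critical orbits) and $s$ (rotation number equal to $\theta$). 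As written, the proposal is a plausible road map with a broken lemma and its main step outsourced, not a proof.
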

In fact, for any natural number $p\geq 2$, the same article describes the example of a family of rational maps with a $p$-periodic Herman ring.

\par

\subsection{Root-finding algorithms and Herman rings}
By root-finding algorithm, we mean a rational map $T_f:Poly_d \rightarrow Rat_k$ such that the roots of the polynomial $f$ are the attracting fixed points of $T_f$ where $Poly_d$ is the set of all polynomials of degree less than or equal to $d$ and $Rat_k$ is the set of all rational maps of degree less than or equal to $k$. A classical root-finding method is Newton's method. For a given complex polynomial $f$, the rational map $T_f:\widehat{\mathbb{C}}\rightarrow \widehat{\mathbb{C}} $ defined as $$ T_f: z\mapsto z- \frac{f(z)}{f^\prime (z)} $$ is called the Newton's map for $f$. If $f \in Poly_2$ then $T_f \in Rat_2$. Since quadratic rational maps do not have any Herman rings, $T_f$ does not have any Herman rings whenever the degree of $f$ is less than or equal to $2$ \cite{smm}. In $1997$, Tan proved that even if the  degree of $f$ is $3$, then $T_f$ has no Herman rings [\cite{Tan}, Proposition 2.6]. In $2009$, Shishikura proved that the Julia set of Newton's method for every polynomial is connected. Hence, they have no Herman rings [\cite{SH}, Corollary II]. Specifically, it is proved that if the Julia set of a rational function is disconnected, then it must contain at least two weakly repelling fixed points. Thus, such rationl map can not be written as a Newton's map of a polynomial.  Note that A fixed point $z$ of $f$ is called weakly repelling if $|f'(z)| > 1 \quad \text{or} ~ f'(z) = 1$. Since Newton's map of any polynomial has at most one weakly repelling fixed point, the Julia set of Newton's method for every polynomial is connected, and hence, they have no Herman rings. A natural generalization of Newton's method is the K\"{o}nig root-finding method. For a given polynomial $f$ and a natural number $\sigma\geq 2$, the K\"{o}nig method of order $\sigma$ associated with $f$ is the rational map $K_{f,\sigma}(z)=z+(\sigma-1)\frac{(\frac{1}{f(z)})^{[\sigma-2]}}{(\frac{1}{f(z)})^{[\sigma-1]}}$. Though the existence of Herman rings of K\"{o}nig's method is not proved yet, in $2013$, Honorato constructed a map $K_{f, \sigma}$ whose Julia set is not connected  \cite{hono}. Thus, the existence of the Herman ring of $K_{f, \sigma}$ is not ruled out.
Another popular root-finding method is Chebyshev's method. For a given polynomial $f$, the Chebyshev's method $C_f$ is defined as $C_f(z)=z-(1+\frac{1}{2}L_f(z))\frac{f(z)}{f^\prime(z)}$, where $L_f(z)=\frac{f(z)f''(z)}{{f'(z)}^2}$. The existence of the Herman ring of $C_f$ is also yet  not known. To our knowledge, no root-finding method is known to possess a Herman ring in the Fatou set. The same is also true whenever $f$ is an entire function.

\section{Herman rings for transcendental functions}\label{trans}
In this section, we investigate the existence and construction of Herman rings in transcendental functions.
\subsection{ Entire functions and analytic self-maps}
It follows from Theorem \ref{nohr} that entire functions do not have any Herman ring. A  transcendental holomorphic map from $\mathbb{C^*}$ to itself is called an analytic self-map of $\mathbb{C^*}$. Note that it is of two types. If $0$ is a pole and an omitted value, then the function is of the form $\frac{e^{h(z)}}{z^n}$, where $h$ is a non-constant entire function and $n\in \mathbb{N}$. On the other hand, if  $0$ is an essential singularity, then the function is of the form $z^ne^{g(z)+h(\frac{1}{z})}$ where $g, ~h$ are non-constant entire functions and $n\in \mathbb{N}$. It is proved by Baker \cite{puncture2} in $1987$ (wandering domains in the puncture disk) that $f(z)=e^{2\pi i \beta}ze^{\alpha (z-\frac{1}{z})}$ has a Herman ring for $0<\alpha<0.5$ and for a suitable choice of $\beta$ (see Figure \ref{hr1}). Moreover, if $V$ is a multiply connected Fatou component of $f(z)=ze^{g(z)+h(\frac{1}{z})}$,  then $V$ is necessarily a Herman ring. Later, in $1996$, Gong et al. \cite{hkg} proved that if $f(z)=z^ne^{g(z)+h(\frac{1}{z})}$, then $f$ has no Herman rings whenever $n\in\mathbb{Z}\setminus\{1\}$. Consequently, the functions  $\frac{e^{h(z)}}{z^n}$ do not have any Herman rings for  a non-constant entire function $h$ and a natural number $n$.


\begin{figure}[h]
	\centering
	
	\includegraphics[width=14.5cm,height=8cm,angle=0]{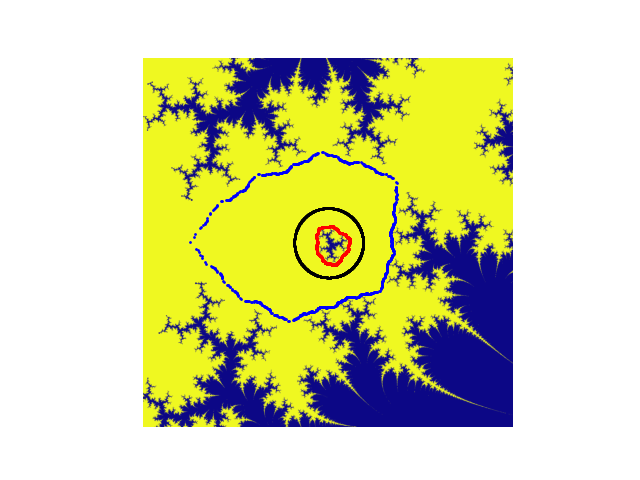}  
	
	\caption{Herman ring of $ze^{i \alpha} e^{\frac{1}{2}\beta(z-\frac{1}{z})}$ for $\alpha=1.9$ and $\beta=0.5$. The blue and red curves are the boundaries of the Herman ring (also the orbit of free critical points $\frac{-1\pm \sqrt{1-\beta^2}}{\beta}$).}
	
	\label{hr1} 
\end{figure}

\subsection{ General meromorphic functions}
Let $M$ denote the class of transcendental meromorphic functions with at least two poles or one pole that is not an omitted value. The class $M$ is usually referred to as the class of general meromorphic functions \cite{berg93}.
Transcendental meromorphic functions are known to have the Herman rings. This section discusses the existence and the number of Herman rings of a transcendental meromorphic function.  The work was stimulated by Zheng on the uniform perfectness of the Julia set of a transcendental meromorphic function of finite type. By the methods of quasiconformal deformation,  Zheng \cite{zh} in $2000$ proved the following.
\begin{theorem}
	Let $f$ be a meromorphic function of finite type. Then $f$ has only a finite number of Herman rings. 
\end{theorem}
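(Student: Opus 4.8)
The plan is to prove that $f$ has only finitely many \emph{cycles} of Herman rings; since every cycle has finite period and a $p$-cycle consists of exactly $p$ components, this at once yields finiteness of the total number of rings. The hypothesis that $f$ is of finite type enters through a single fact: its set of singular values $S(f)$ is finite. The whole argument then consists in converting each cycle of Herman rings into an independent ``degree of freedom'' and bounding the number of such degrees of freedom by $|S(f)|$. As structural motivation I would first recall the features already emphasized in the text: a rotation domain contains no singular value in its interior, while the orbit of a singular value must accumulate on the boundary of every rotation cycle (the finite-type, transcendental analogue of Mañé's lemma). This guarantees that each Herman-ring cycle is genuinely forced by the singular orbits, which is the qualitative reason one expects a bound in terms of $|S(f)|$.

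The quantitative heart of the argument is quasiconformal deformation, in keeping with Zheng's method. Fix a cycle $H_1,\dots,H_p$ of Herman rings with conjugacies $\phi_j : H_j \to A_{r_j}$ to round annuli on which $f^p$ becomes the rigid rotation $z \mapsto e^{2\pi i\alpha}z$. On the model annulus this rotation preserves a nontrivial, positive-dimensional family of Beltrami coefficients (for instance those depending only on $|z|$, which ``stretch'' the modulus). Pulling such a coefficient back by $\phi_j$, transporting it around the cycle, and spreading it over the entire grand orbit $\bigcup_n f^{-n}(H_1 \cup \cdots \cup H_p)$ while setting it to $0$ elsewhere produces an $f$-invariant Beltrami coefficient $\mu_t$ with $\|\mu_t\|_\infty < 1$. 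Solving the Beltrami equation by the Measurable Riemann Mapping Theorem gives a quasiconformal $\varphi_t$, and $f_t = \varphi_t \circ f \circ \varphi_t^{-1}$ is again a finite-type meromorphic map with $|S(f_t)| = |S(f)|$. Varying the stretch changes the modulus of the ring, hence changes $f_t$ nontrivially, so each cycle contributes at least one genuine parameter to the quasiconformal deformation space $\mathrm{Def}(f)$.

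The argument is then completed by two points. First, deformations supported on distinct cycles are carried by disjoint grand orbits (grand orbits partition $\hC$), so $k$ distinct cycles produce a $k$-parameter family of pairwise independent deformations inside $\mathrm{Def}(f)$. Second, and decisively, the quasiconformal deformation space of a finite-type meromorphic function is finite-dimensional, with dimension controlled by $|S(f)|$; this is the Teichm\"uller-theoretic form of the Fatou--Shishikura inequality. Comparing the two, $k$ cannot exceed this bound, so there are finitely many cycles and therefore finitely many Herman rings.

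I expect the main obstacle to be this finite-dimensionality and independence step: rigorously identifying $\mathrm{Def}(f)$ for a finite-type transcendental meromorphic map, verifying that the deformations built from distinct cycles are \emph{linearly independent} in it (not merely individually nontrivial), and extracting the explicit bound by $|S(f)|$. The transcendental setting adds further care, since $\hC$ is not a rigid object here: one must ensure the deformed map stays meromorphic of the same class and that the essential singularity at $\infty$ does not interfere with surgery supported on the (necessarily bounded) Herman-ring cycles. Constructing the invariant Beltrami family on each cycle and checking harmless behaviour off the grand orbit is routine; tying the count cleanly to $|S(f)|$ is the delicate part.
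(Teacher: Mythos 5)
Your outline follows essentially the same route the paper indicates: it attributes the result to Zheng's quasiconformal-deformation argument, in which each cycle of Herman rings carries an independent invariant Beltrami differential supported on its grand orbit, and the number of such independent deformations of a finite-type map is bounded in terms of the number of singular values (yielding the $2k+4$ bound the paper quotes). The delicate step you flag --- finite-dimensionality of the deformation space for class-$\mathcal{S}$ meromorphic maps and linear independence of the deformations from distinct cycles --- is exactly the content of Zheng's proof, so your proposal is correct in approach and matches the paper's.
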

A meromorphic function $f$  is of finite type if $f$ has finitely many singular values. It is proved that if $f$ has $k$ many singular values then $f$ can have at most $2k+4$ many Herman rings. The above theorem may not be true for a meromorphic function that is not of finite type. Using quasiconformal surgery, the same article proved the following.
\begin{theorem}
	There exists a transcendental meromorphic function that has an infinite number of Herman rings.
\end{theorem}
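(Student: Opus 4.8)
The plan is to realize infinitely many Herman rings simultaneously by performing Shishikura's Siegel-disc-to-Herman-ring surgery (Section \ref{quasi}) in an \emph{equivariant} fashion, so that a single bounded piece of surgery is repeated along a translation symmetry. The starting point would be a transcendental meromorphic function $g$ that commutes with a translation $T(z) = z + \tau$, that is $g(z+\tau) = g(z) + \tau$, and that possesses an invariant Siegel disc $S_0$ with Brjuno rotation number $\theta$. Writing $g(z) = z + P(z)$ with $P$ a non-constant $\tau$-periodic meromorphic function (so that $g$ genuinely has poles and an essential singularity at $\infty$), the commutation relation forces every translate $S_n := S_0 + n\tau$, $n \in \mathbb{Z}$, to be an invariant Siegel disc of $g$ with the same rotation number $\theta$, since $T$ conjugates $g|_{S_0}$ to $g|_{S_n}$. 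Thus $g$ already carries infinitely many invariant Siegel discs arranged periodically.

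First I would fix a $g$-invariant Jordan curve $\gamma_0 \subset S_0$ lying in a single period strip and set $\gamma_n := \gamma_0 + n\tau \subset S_n$; by choosing $\gamma_0$ suitably the bounded regions $B(\gamma_n)$ become pairwise disjoint. On each $B(\gamma_n)$ I would glue in the annular rotation model exactly as in Shishikura's construction (with $p=1$), performing the \emph{same} modification on every $B(\gamma_n)$, namely the $T$-translate of the modification on $B(\gamma_0)$. This yields a quasiregular map $G : \mathbb{C} \to \mathbb{C}$ that agrees with $g$ off $\bigcup_n B(\gamma_n)$, that commutes with $T$, and whose restriction to each modified annulus is conjugate to an irrational rotation of a genuine annulus. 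Equivariance is the whole point: the Beltrami coefficient $\mu$ produced by the surgery is $T$-invariant and $G$-invariant, so although the support of $\mu$ accumulates at $\infty$, its essential supremum over all of $\mathbb{C}$ equals its essential supremum over one period strip, whence $\|\mu\|_\infty = k < 1$.

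With this uniform bound in hand I would invoke the Measurable Riemann Mapping Theorem on $U = \mathbb{C}$ to produce a quasiconformal homeomorphism $\phi : \mathbb{C} \to \mathbb{C}$ integrating $\mu$. Because $\mu$ is $G$-invariant, the conjugate $f := \phi \circ G \circ \phi^{-1}$ has vanishing Beltrami coefficient almost everywhere and is therefore holomorphic wherever it is finite; since $g$ (hence $G$) has infinitely many poles and an essential singularity at $\infty$ which $\phi$ preserves, $f$ is genuinely transcendental meromorphic. Note that the Fundamental Lemma (Theorem \ref{fl}) is not directly usable here, as it is stated for finitely many pieces on $\widehat{\mathbb{C}}$ and outputs quasirational maps; the infinite, accumulating surgery forces the direct use of the Measurable Riemann Mapping Theorem on $\mathbb{C}$. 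Each modified annulus $A_n$ around $\gamma_n$ is $G$-invariant and carries an irrational rotation, so its image $\phi(A_n)$ is an $f$-invariant doubly connected rotation domain, i.e. a Herman ring; as the $A_n$ are pairwise disjoint and $\phi$ is injective, the rings $\{\phi(A_n)\}_{n \in \mathbb{Z}}$ are distinct, giving $f$ infinitely many Herman rings.

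The main obstacle I anticipate is exactly the global control of the distortion: in the finite-surgery setting the bound $\|\mu\|_\infty < 1$ is automatic, but here the surgery regions accumulate at the essential singularity, and without the translation symmetry one could not prevent the dilatation from degenerating to $1$ as $n \to \infty$. The symmetry is what converts infinitely many surgeries into a single bounded one and keeps $k<1$. Secondary points to verify are the existence of a seed $g$ of the stated form with a Siegel disc of Brjuno rotation number (e.g. arranging the multiplier at a fixed point to be $e^{2\pi i \theta}$), that $\phi$ extends to fix $\infty$ so the essential singularity, and hence transcendentality, is preserved, and that each $\phi(A_n)$ is honestly doubly connected rather than a degenerate or simply connected domain.
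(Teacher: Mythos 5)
The paper does not actually prove this statement: it only records that Zheng \cite{zh} established it ``using quasiconformal surgery,'' so there is no detailed argument to compare against line by line. Your proposal uses the same core technique, but fleshes it out with a specific mechanism --- translation equivariance --- that turns infinitely many simultaneous Shishikura surgeries into a single bounded one. That is a legitimate and rather clean way to realize the statement: a seed such as $g(z)=z+(e^{2\pi i\theta}-1)\tan z$ with $\theta$ Brjuno commutes with $z\mapsto z+\pi$ and has Siegel points at every $n\pi$, and the periodicity does exactly what you claim for the dilatation, namely it makes the quasiconformality constant of the gluing, and hence $\|\mu\|_\infty$, independent of $n$. Your observation that the Fundamental Lemma (Theorem \ref{fl}) cannot be invoked and that one must integrate $\mu$ directly on $\mathbb{C}$ via the Measurable Riemann Mapping Theorem is also correct and important.

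Two points in your sketch need more care before it is a proof. First, the phrase ``glue in the annular rotation model'' is dangerously ambiguous: if what you insert on $B(\gamma_n)$ is a rigid rotation of a disc, the entire disc remains in the Fatou set and the straightened map simply has a Siegel disc again, not a Herman ring. As in Shishikura's construction described in Section \ref{quasi}, you must glue in a genuine map with a proper Siegel disc (of rotation number $-\theta$ after the orientation-reversing identification), so that Julia set is transported into each $B(\gamma_n)$; only then is the bounded complementary component of $\phi(A_n)$ guaranteed to meet $\mathcal{J}(f)$, which is what makes the resulting rotation domain doubly connected rather than simply connected. Second, the invariance and boundedness of $\mu$ require the standard grand-orbit spreading argument together with the fact that each orbit of $G$ meets the non-conformal collar a uniformly bounded number of times (once an orbit enters some $B(\gamma_n)$ it stays there, and inside the dilatation is controlled by the fixed conjugacy $\psi$); this is routine but should be stated, since with infinitely many surgery regions accumulating at the essential singularity it is the only thing standing between you and $\|\mu\|_\infty\to 1$. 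With these two points supplied, your argument is a correct realization of the theorem by the same method the paper attributes to Zheng; as a bonus, transcendence of the resulting map is automatic, since by Shishikura's bound no rational map can carry infinitely many Herman rings.
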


  We are concerned with invariant cycles of Herman rings for transcendental meromorphic functions. The  result below concerns the relation of such cycles with the number of poles of $f$. It is proved by Fagella et al. \cite{nj} in $2005$ that if $H_1, H_2,....., H_n$ be invariant Herman rings of $f$, then there exists a pole in every bounded component of $\mathbb{C}\setminus \bigcup_{i=1}^{n} H_i$. Consequently, we have the following.
%
\begin{cor}
	If a transcendental meromorphic function $f$ has $n$ poles, then $f$ has at most $n$ invariant Herman rings.
\end{cor}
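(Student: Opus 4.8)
The plan is to turn the count of invariant Herman rings into a count of bounded complementary components of their union, and then feed that into the cited result of Fagella et al., which places a pole inside each such component. So, suppose $f$ has $k$ invariant Herman rings $H_1,\dots,H_k$; these are pairwise disjoint, doubly connected Fatou components, and each lies in $\mathbb{C}$ because $\infty$ is the essential singularity and hence not contained in any rotation domain. Since distinct bounded components of $\mathbb{C}\setminus\bigcup_{i=1}^{k}H_i$ are disjoint, the cited result produces a distinct pole in each of them; thus it suffices to show that this complement has at least $k$ bounded components, for then $n\ge k$, which is exactly the assertion.

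First I would replace the rings by their core curves. As recalled in the introduction, each invariant ring $H_i$ is an uncountable union of $f$-invariant Jordan curves, so I may select one such curve $\gamma_i\subset H_i$. The curves $\gamma_1,\dots,\gamma_k$ are genuine simple closed curves and are pairwise disjoint because the $H_i$ are. By Alexander duality (equivalently, by an easy induction on the Jordan curve theorem), $k$ pairwise disjoint Jordan curves divide $\widehat{\mathbb{C}}$ into exactly $k+1$ regions; discarding the region containing $\infty$ leaves exactly $k$ bounded components of $\mathbb{C}\setminus\bigcup_{i=1}^{k}\gamma_i$.

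It then remains to transfer this count from the cores to the rings. Because $\gamma_i\subset H_i$ we have $\mathbb{C}\setminus\bigcup_i H_i\subset\mathbb{C}\setminus\bigcup_i\gamma_i$, and each $H_i$ is an open annular neighbourhood (a collar) of $\gamma_i$. I would argue that passing from $\bigcup_i\gamma_i$ to $\bigcup_i H_i$ only trims a collar along each boundary curve of each region of the complement, so that every bounded region remains connected and nonempty; for instance, the innermost such region becomes the hole $B(H_{i_0})$ of the innermost ring. Hence $\mathbb{C}\setminus\bigcup_i H_i$ again has exactly $k$ bounded components, each of which contains a pole, and these poles are distinct, giving $k\le n$.

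\textbf{The main obstacle} I expect is precisely this last transfer step: the boundary components $\partial H_i$ of a Herman ring are in general highly irregular (not Jordan curves), so one cannot run the duality count on the rings themselves and must instead work with the smooth invariant core curves, checking carefully that the collaring neither merges nor creates bounded complementary components. Everything else—the existence of the invariant curves, their pairwise disjointness, and the duality count for disjoint Jordan curves—is routine.
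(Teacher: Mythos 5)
Your proposal is correct and follows essentially the same route as the paper, which derives the corollary in one line from the Fagella--Geyer--Rippon result by observing that the complement of $k$ disjoint invariant rings has at least $k$ bounded components, each forced to contain a distinct pole. The only point worth tightening is your ``transfer step'': the cleanest justification that each bounded component of $\mathbb{C}\setminus\bigcup_i\gamma_i$ still meets $\mathbb{C}\setminus\bigcup_i H_i$ is that the relevant boundary component of the adjacent ring consists of Julia points (both complementary components of a Herman ring contain Julia points, as recalled in the introduction), and these points lie in no $H_j$ and on no $\gamma_j$.
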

This implies that the number of invariant Herman rings can not exceed the number of poles. Moreover, for any given natural number $n$, the existence of a function with $n$ invariant Herman rings is proved in \cite{pdnf}.
\begin{theorem}
	Given any $n>0$, there exists $f\in M$ with exactly $n$ poles and $n$ invariant nested Herman rings. Moreover, the rotation number of each of the rings may be any prescribed Brjuno number. 
\end{theorem}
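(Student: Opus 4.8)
The plan is to realize the configuration by quasiconformal surgery, straightening a carefully assembled quasiregular model to a genuine transcendental meromorphic map via the Measurable Riemann Mapping Theorem, exactly in the spirit of Shishikura's construction described in Section~\ref{quasi}. The template I would build is the abstract nested picture forced by the counting constraint of Fagella et al.: $n$ concentric rotation annuli whose bounded complementary components each carry a single pole.

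First I would fix the prescribed Brjuno numbers $\theta_1,\dots,\theta_n$ and construct a quasiregular map $g:\widehat{\mathbb{C}}\to\widehat{\mathbb{C}}$ together with $n$ disjoint round annuli $A_1\supset A_2\supset\cdots\supset A_n$, nested so that $A_{k+1}$ lies in the bounded complementary component of $A_k$, with the following properties: (i) on each $A_k$ the map $g$ is holomorphic and conformally conjugate to the rigid rotation $z\mapsto e^{2\pi i\theta_k}z$, so that $A_k$ is a $g$-invariant rotation annulus; (ii) $g$ has exactly $n$ poles, one placed in each of the $n$ bounded complementary components of $\bigcup_k A_k$ (the $n-1$ annular gaps together with the innermost disc); and (iii) $g$ is transcendental, with all its singular values and their forward orbits kept away from the interiors of the $A_k$. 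The single-annulus block can be taken from the known construction of one invariant Herman ring in a map of class $M$, and the nested model is obtained by gluing $n$ such blocks concentrically, interpolating across the gap regions by quasiconformal maps so that the complex dilatation of $g$ is supported only on these finitely many gaps.

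Next I would manufacture a $g$-invariant complex structure. Using invariance of the annuli, I would pull back the standard structure along the orbits of $g$ to obtain a Beltrami coefficient $\mu$ with $\|\mu\|_\infty<1$: the key point is that every orbit meets the non-holomorphic (interpolated) region only finitely often, since the $A_k$ are invariant and $g$ is holomorphic there, so the iterated pullback does not accumulate dilatation. Applying the Measurable Riemann Mapping Theorem to $\mu$ yields a quasiconformal homeomorphism $\phi$ of $\widehat{\mathbb{C}}$ with dilatation $\mu$, and then $f:=\phi\circ g\circ\phi^{-1}$ has vanishing dilatation and is therefore holomorphic where defined; being a quasiconformal conjugate of a transcendental quasiregular map with poles, $f$ is transcendental meromorphic. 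The images $H_k:=\phi(A_k)$ are $n$ nested invariant Herman rings, the conjugacy carries the rotation number $\theta_k$ to $H_k$, and the $n$ poles of $g$ become the $n$ poles of $f$, one in each bounded complementary component. In particular $f$ has at least one pole, so $f\in M$, and the Fagella et al. bound confirms that there are exactly $n$ of them.

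The hard part will be the assembly in the first step: producing a single globally defined quasiregular $g$ on $\widehat{\mathbb{C}}$ that is genuinely transcendental-meromorphic in character, carries exactly $n$ poles in the prescribed nested gaps, and---most delicately---has its postsingular set accumulating only on the boundaries of the $A_k$ and never entering their interiors, as it must since rotation domains contain no singular values. Reconciling the pole placement demanded by the counting constraint with this control of the singular orbits, while keeping the gap interpolation quasiconformal with uniformly bounded dilatation so that the pullback in the second step genuinely satisfies $\|\mu\|_\infty<1$, is the analytic core of the argument.
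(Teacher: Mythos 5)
Your overall strategy---assemble a quasiregular model with $n$ nested invariant rotation annuli and $n$ poles, then straighten it via an invariant Beltrami coefficient and the Measurable Riemann Mapping Theorem---is exactly the quasiconformal-surgery route that Dom\'inguez and Fagella follow in \cite{pdnf}; the survey only cites that proof rather than reproducing it, so at the level of approach you are aligned with the source. There is, however, a concrete gap in your straightening step. You support the complex dilatation of $g$ on the \emph{gaps} between the annuli $A_k$ and then claim every orbit meets this non-holomorphic region only finitely often ``since the $A_k$ are invariant and $g$ is holomorphic there.'' That inference does not follow: invariance of the $A_k$ controls orbits that enter the annuli, but says nothing about orbits that live in the gaps, and the gaps contain the poles and a substantial part of the Julia set, whose orbits may return to the gaps infinitely often. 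With the dilatation supported there, the iterated pullback of the standard structure need not have essential supremum bounded away from $1$, and the Measurable Riemann Mapping Theorem cannot be invoked. The correct arrangement, encoded in the Fundamental Lemma (Theorem~\ref{fl}), is the reverse of yours: $g$ is kept holomorphic off finitely many preimages of the invariant region, and the quasiconformal distortion is confined to the invariant annuli themselves, where $g$ is a quasiconformally conjugated rigid rotation, so each orbit crosses the support of the dilatation a bounded number of times before being trapped in a region already carrying an invariant structure.

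The second issue is that the step you defer as ``the hard part'' is essentially the whole content of the proof in \cite{pdnf}. One cannot glue $n$ globally defined ``single-ring blocks'' concentrically, since each such block is already a map of the entire sphere; the actual construction starts from explicit maps with invariant Siegel discs (for instance $z\mapsto e^{2\pi i\theta}ze^{-z}$ and Blaschke-type models), cuts along invariant curves inside the linearization domains, and surgers in poles, iterating this procedure to produce the nested configuration with exactly $n$ poles. The Brjuno condition enters precisely to guarantee the Siegel discs of the model maps, and the count of exactly $n$ poles is arranged in the model itself; the result of Fagella et al.\ \cite{nj} that you invoke only shows that one pole per bounded complementary component is unavoidable, i.e.\ that the example is optimal, not that the constructed map has no extra poles. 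As written, your argument records the correct shape of the proof, but both the model construction and the straightening step would need to be reworked along the lines of Section~\ref{quasi} before they go through.
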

The same article gave an example of a meromorphic map with one Herman ring and one pole, which is not omitted. In particular, it is proved that there exist values $a,b\in \mathbb{C}\setminus \{0\}$ for which the meromorphic map $az^2 \frac{e^{bz}}{z+1}$ has a Herman ring.

A Herman ring $H$ of $f$ is called unbounded on one side if an essential singularity of $f$ lies on one of the connected components  of the boundary of $H$. If both connected components contain an essential singularity of $H$, we say that $H$ is doubly unbounded. Since for $f\in M$, the function $f$ only has one essential singularity at the point at infinity, and hence, it can have on one side unbounded Herman rings. Now we give a result regarding the existence of an unbounded Herman ring. It is given by Dominguez et al. \cite{pdnf} in $2004$.

\begin{theorem}\label{260}
	There exists $f\in M$ such that $\mathcal{F}(f)$ has a Herman ring that is unbounded on one side.
\end{theorem}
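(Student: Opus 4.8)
The plan is to build the required function by quasiconformal surgery, following the transcendental version of Shishikura's construction outlined in Section~\ref{quasi}. The target is an $f\in M$ with an invariant Herman ring $H$ whose outer boundary passes through the essential singularity at $\infty$, so that $\infty$ lies on one boundary component of $H$ and $H$ is unbounded on that side. As before, the mechanism is to assemble a globally defined quasiregular model, endow it with an invariant Beltrami coefficient of norm strictly less than $1$, and then straighten it to a genuine meromorphic map using the Measurable Riemann Mapping Theorem together with the Fundamental Lemma (Theorem~\ref{fl}).

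The decisive design choice lies in the model. The bounded rings produced by the earlier constructions arise from Siegel disks compactly contained in $\mathbb{C}$; to force $\infty$ onto the boundary I would instead start from a transcendental meromorphic $g_0$, with its only essential singularity at $\infty$, that carries an \emph{unbounded} invariant Siegel disk $S$, i.e. one with $\infty\in\partial S$. Let $\psi:S\to\mathbb{D}$ be the linearizing coordinate conjugating $g_0|_S$ to the rotation $w\mapsto e^{2\pi i\alpha}w$, and let $z_0=\psi^{-1}(0)$ be the (finite) indifferent fixed point. For small $c_0$ the invariant curve $\gamma=\psi^{-1}(\{|w|=c_0\})$ is a bounded Jordan curve around $z_0$, while the invariant curves $\psi^{-1}(\{|w|=c\})$ with $c\to 1^-$ exhaust $S$ out to its unbounded boundary $\partial S$. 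I would then perform the surgery inside the bounded region $B(\gamma)$: remove the disk containing the fixed point $z_0$ and glue in, across a collar neighbourhood of $\gamma$, a rotation model carrying a pole, using a quasiconformal interpolation that matches $g_0$ along $\gamma$. The resulting map $g$ is holomorphic off the collar; the portion $H_0=\psi^{-1}(\{c_0<|w|<1\})$ of $S$ becomes an annulus whose \emph{outer} boundary is still $\partial S$ (unbounded, with $\infty$ on it) and whose \emph{inner} boundary is $\gamma$, now enclosing the glued pole.

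To straighten, I would take the Beltrami coefficient $\mu_0$ produced by the interpolation on the collar and spread it by the dynamics to obtain a $g$-invariant $\mu$. Because the collar sits inside a rotation domain, its orbit stays within the ring and $g$ is holomorphic there after the first iterate, so the non-holomorphy of $g$ is confined to $g^{-N}(\text{collar})$ for some finite $N$ and $\|\mu\|_\infty<1$; this is precisely the hypothesis of Theorem~\ref{fl}, which yields a quasiconformal $\phi$ with $f=\phi\circ g\circ\phi^{-1}$ meromorphic. Normalizing $\phi$ to fix $\infty$ and noting that $\mu$ is supported on a compact subset of $\mathbb{C}$ (so $\phi$ is conformal near $\infty$), the essential singularity of $f$ remains at $\infty=\phi(\infty)$, and $\partial H=\phi(\partial S)$ passes through $\infty$, where $H=\phi(H_0)$ is the Herman ring of rotation number $\alpha$. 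Thus $H$ is unbounded on one side. Finally, to place $f$ in the class $M$ I would check that the glued pole lies in the bounded complementary component $B(H)$, consistent with the pole statement of Fagella et al.~\cite{nj}, and arrange (by choice of the rotation model, e.g. giving it an additional pole or ensuring the glued pole is attained as a value) that $f$ has two poles or a non-omitted pole.

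The main obstacle I anticipate is not the surgery bookkeeping, which is routine once the data are in place, but securing the starting model: exhibiting (or constructing) a meromorphic $g_0$ with its sole essential singularity at $\infty$ and a genuinely unbounded invariant Siegel disk whose boundary reaches $\infty$, so that the outer boundary of the resulting ring is forced to contain $\infty$. The second delicate point is to confirm that this boundary incidence survives straightening; this is what makes normalizing $\phi$ to fix $\infty$, together with the compact support of $\mu$ keeping $\phi$ conformal at $\infty$, essential to the argument.
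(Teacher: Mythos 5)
The survey does not actually prove this statement; it records it as a theorem of Dom\'{\i}nguez and Fagella \cite{pdnf}, and your strategy --- run the Siegel-to-Herman surgery of Section \ref{quasi} on a starting map whose invariant Siegel disc is \emph{unbounded}, so that the untouched outer part of the disc becomes the unbounded side of the resulting ring --- is essentially the strategy of that original proof. The surgery bookkeeping you describe is sound and standard: the glued region $\overline{B(\gamma)}$ is a compact subset of $S$ (because $S$ is simply connected and $\gamma\subset S$), $g_0(\partial S)\subset\partial S$ since $\partial S$ lies in the Julia set, so orbits on $\partial S$ never meet the modified region and $\partial S$ survives as the outer boundary of the ring after straightening.

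The genuine gap is precisely the point you flag and then leave open: the existence of the starting map $g_0$ with an unbounded invariant Siegel disc whose boundary contains $\infty$. This is not routine --- for a generic Brjuno rotation number nothing forces a Siegel disc to reach the essential singularity, and the entire theorem reduces to this input. In \cite{pdnf} it is supplied by an appeal to a theorem of Herman guaranteeing that for suitable irrational $\theta$ the Siegel disc about the origin of the entire map $e^{2\pi i\theta}(e^z-1)$ (which has no critical points and a single finite asymptotic value) is unbounded; without citing or establishing such a statement your argument does not start. Two smaller points. First, your claim that the invariant Beltrami coefficient $\mu$ is supported on a compact subset of $\mathbb{C}$ is false in general: the support contains \emph{all} backward images of the collar, and these accumulate at the essential singularity. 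This is harmless, because all you need is that the normalized straightening map is a homeomorphism of $\widehat{\mathbb{C}}$ fixing $\infty$, so that $\infty\in\partial S$ is sent to a point of the outer boundary component of the ring; but the parenthetical reason you give ($\phi$ conformal near $\infty$) is not available. Second, to land in the class $M$ you must verify on the explicit model that the glued pole is not an omitted value of the straightened map (or glue in two poles); you mention this, but it is a check that has to be carried out, not merely arranged in principle.
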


\par


We now discuss the Herman rings of an elliptic function. A lattice is a collection of complex numbers that form a discrete group with respect to addition. Lattice is said to be trivial, simple, or double if the collection becomes $\{0\}$, $\mathbb{Z}$, or $\mathbb{Z} \times \mathbb{Z}$, respectively. An elliptic function is a transcendental meromorphic function that is doubly periodic with respect to a given lattice. For a comprehensive idea about elliptic functions and their properties, one can consult the classical exposition of Du Val \cite{duval}. A result regarding the non-existence of the  Herman ring of an elliptic function of order $2$ is already proved by Rocha \cite{rocha} in $2020$. Since an elliptic function is of finite type, it must have a finite number of Herman rings \cite{zh}. Then, applying quasiconformal surgery, we can obtain the following result \cite{rocha}.
\begin{theorem}
	Let $f$ be an elliptic function of order $\mathcal{O}$ greater than one. Then $f$ can have at most $(\mathcal{O}-2)$ invariant Herman rings.
\end{theorem}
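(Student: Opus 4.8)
The plan is to combine two facts that are already available in the excerpt: first, that an elliptic function is of finite type and therefore has only finitely many Herman rings (the finite-type theorem of Zheng), and second, the structural relationship between invariant Herman rings and poles. The key quantitative input is that an elliptic function $f$ of order $\mathcal{O}$ has exactly $\mathcal{O}$ poles in each fundamental period parallelogram, counted with multiplicity. I would begin by recalling this basic fact from elliptic function theory together with the double periodicity of $f$ with respect to its lattice $\Lambda$.

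The core of the argument should proceed through quasiconformal surgery, exactly as the sentence preceding the statement advertises (``applying quasiconformal surgery, we can obtain the following result''). The idea is to take a cycle of invariant Herman rings of $f$ and turn each ring into a Siegel disk or an attracting/parabolic basin by pasting in an appropriate model map on the annulus, thereby lowering the dynamical complexity while controlling the count. More concretely, I would exploit the result cited from Fagella et al. \cite{nj}: if $H_1, H_2, \ldots, H_n$ are invariant Herman rings, then every bounded complementary component of $\bigcup_{i=1}^{n} H_i$ must contain a pole. Thus each invariant ring forces at least one pole into its bounded region. The subtlety for an elliptic function is that one cannot merely count poles of $f$ on all of $\C$ (there are infinitely many, by periodicity); instead one must count poles \emph{modulo the lattice}, i.e.\ poles within a single fundamental domain, which is precisely $\mathcal{O}$.

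The main technical step is to explain why the bound is $\mathcal{O}-2$ rather than $\mathcal{O}$. Here I would argue that the double periodicity removes degrees of freedom: the rings and the poles they enclose live on the torus $\C/\Lambda$, and a valence or Riemann--Hurwitz type count on the torus (whose Euler characteristic is zero, unlike the sphere) accounts for the deficit of $2$. I expect this is also where one uses that an invariant Herman ring cannot enclose a region that wraps around the torus trivially; at least two of the $\mathcal{O}$ poles must be ``used up'' by the topological constraints coming from the vanishing Euler characteristic and the requirement that the complement of the rings carry the remaining dynamics (for instance, the superattracting or free critical behavior that the surgery needs to anchor). The hard part will be making this torus bookkeeping rigorous: one must pass carefully between the planar picture, where there are infinitely many rings and poles, and the quotient torus, and verify that the surgery can be performed $\Lambda$-equivariantly so that the modified map remains elliptic (or at least meromorphic of the same order) and that no invariant ring is double-counted under the lattice identification.

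In summary, the skeleton I would follow is: (i) record that $f$ has exactly $\mathcal{O}$ poles per fundamental domain and finitely many Herman rings by Zheng's theorem; (ii) invoke the pole-enclosure result of \cite{nj} to associate poles to invariant rings; (iii) lift everything to the torus $\C/\Lambda$ and carry out a $\Lambda$-equivariant quasiconformal surgery replacing rings by Siegel disks; and (iv) perform the Euler-characteristic/valence count on the torus to extract the sharp bound $\mathcal{O}-2$. The step most likely to cause trouble is (iv), reconciling the planar and toral counts so that the ``$-2$'' deficit emerges cleanly and the surgery genuinely reduces the ring count by the required amount without destroying ellipticity.
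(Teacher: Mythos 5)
Your plan matches the route the survey itself advertises --- Zheng's finite-type theorem plus quasiconformal surgery, following Rocha \cite{rocha} --- but the proposal stops exactly where the theorem's content begins: nothing in steps (i)--(iv) actually produces the number $\mathcal{O}-2$, and you concede as much when you flag step (iv) as the part ``most likely to cause trouble.'' The entire substance of the statement is that the bound is $\mathcal{O}-2$ rather than $\mathcal{O}$; a proof that defers the ``$-2$'' to an unspecified Euler-characteristic bookkeeping has a genuine gap at its only quantitative step.

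More importantly, the counting mechanism you lean on is the wrong one. The pole-enclosure result of Fagella et al.\ \cite{nj} (each bounded complementary component of a union of invariant rings contains a pole) can at best yield the bound $\mathcal{O}$ after passing to a fundamental parallelogram, and there is no visible way to shave two off of a pole count; you yourself only gesture at why two poles should be ``used up.'' The bound $\mathcal{O}-2$ is the elliptic analogue of Shishikura's bound $d-2$ for rational maps of degree $d$ \cite{smm}, and the deficit of $2$ there comes from \emph{singular values}, not poles: the surgery converts each Herman ring into Siegel disks, every cycle of rotation domains must have a singular orbit accumulating on each boundary component (the Ma\~n\'e-type lemma quoted earlier in this survey), and one then counts against the finitely many critical values that an order-$\mathcal{O}$ elliptic function possesses (Riemann--Hurwitz applied to the induced degree-$\mathcal{O}$ map from the torus to the sphere gives $2\mathcal{O}$ critical points per period parallelogram, counted with multiplicity). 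Your instinct to work $\Lambda$-equivariantly on the quotient torus is sound and is indeed where the difference from the rational case enters, but until the surgery-plus-singular-value count is actually carried out, the argument establishes finiteness (which is already Zheng's theorem \cite{zh}) and nothing sharper.
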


\subsection{Non-invariant Herman rings}


In 2012, Fagella et al.~\cite{nj} introduced a quasiconformal surgery technique to transcendentalize rational maps possessing Herman rings. In the same work, they established the existence of nested Herman rings for transcendental meromorphic functions, including the construction of a $2$-periodic nested Herman ring. However, for periods $p \geq 2$, obtaining explicit formulas for the corresponding functions and visualizing a $p$-periodic Herman ring $H$ becomes increasingly challenging, since such a ring cannot contain any circle $C$ satisfying $f^{p}(C) = C$ \cite{yang}. As an illustration, Figure~\ref{hr3} depicts a $2$-cycle of Herman rings for the function
\[
f(z) = \frac{a-b}{b e^{b}} \frac{z^{2}}{z-a} e^{z} + b,
\]
for suitable choices of the parameters $a$ and $b$. 
\begin{figure}[H]
	\centering
	
	\includegraphics[width=14.5cm,height=6.5cm,angle=0]{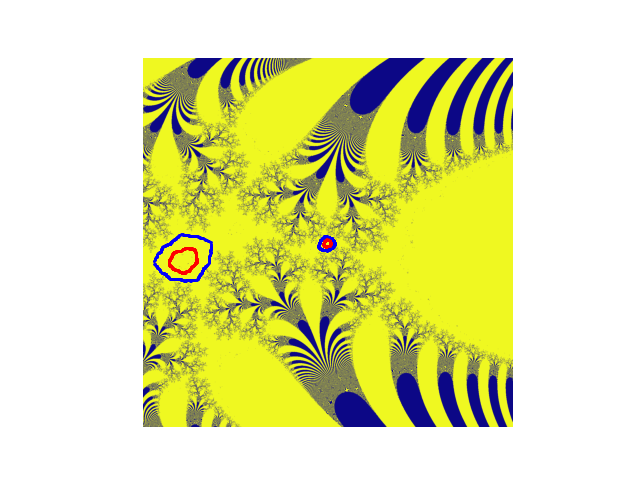}  
	
	\caption{$2-$ cycle of Herman rings of $\frac{a-b}{be^b}\frac{z^2}{z-a}e^z+b$ for $a=0.01$ and $b=-1.23797-i0.16536$. The blue and red curves denote the boundaries of the Herman rings (the orbits of $\frac{-1+a\pm \sqrt{a^2+6a+1}}{2}$).}
	
	\label{hr3} 
\end{figure}

These results motivate further investigation into explicit constructions and visualization techniques for higher-period Herman rings in the transcendental setting. The example of such functions with a non-invariant Herman ring is given by Yang \cite{yang} in the following theorem for the first time  in $2021$.

\begin{theorem}
For a positive integer $p$ and a Brjuno number $\theta$, there exist $a$, $b$, and $u=u(a,b)\in \mathbb{C}\setminus \{0\}$ such that $f_{a,b}(z)=u\frac{z^2(z-b)}{z-a}e^z$ has a $p$-cycle of Herman rings of rotation number $\theta$ and a super-attracting $p$-cycle different from $0$ (see Figure \ref{hr2}). 
\end{theorem}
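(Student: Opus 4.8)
The plan is to realize $f_{a,b}$ by quasiconformal surgery on the entire model $g(z)=\lambda z^{2}e^{z}$, following the transcendental version of the scheme of Section~\ref{quasi} developed in \cite{nj}. First I would pin down the model dynamics. Since $g'(z)=\lambda z(z+2)e^{z}$, the point $z=0$ is a double zero and hence a superattracting fixed point, and the only essential singularity is at $\infty$. Choosing the parameter $\lambda=\lambda(\theta,p)$ so that $g$ has a periodic point of period $p$ with multiplier $e^{2\pi i\theta}$ --- one complex condition on one complex parameter, solvable by a continuity/parameter argument --- and invoking that $\theta$ is of Brjuno type, the corresponding irrationally indifferent cycle linearizes and yields a $p$-cycle of Siegel discs $\{S_{1},\dots,S_{p}\}$ of rotation number $\theta$, while the superattracting fixed point at $z=0$ persists.

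Next I would open each Siegel disc into a ring. In every $S_{i}$ fix a $g^{p}$-invariant Jordan curve $\gamma_{i}$ with $\gamma_{i+1}=g(\gamma_{i})$, and alter $g$ only on the bounded discs $B(\gamma_{i})$: through quasiconformal maps glue into each $B(\gamma_{i})$ a model dynamics carrying a pole and one point of a superattracting $p$-cycle (distinct from the fixed point $z=0$), matched to the rotation across $\gamma_{i}$ so that the inner boundary of the surviving annulus $S_{i}\setminus\overline{B(\gamma_{i})}$ becomes part of the Julia set. A pole is unavoidable here: by Theorem~\ref{nohr} an entire map has no ring, and by \cite{nj} a bounded complementary component of a transcendental ring must contain a pole; it is precisely this inner model that supplies the pole and the superattracting $p$-cycle demanded by the statement, while the annuli keep the rotation of rotation number $\theta$. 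The glued map $G$ equals $g$ off $\bigcup_{i}B(\gamma_{i})$, is continuous and quasiregular, and satisfies $\partial_{\bar{z}}G=0$ off the backward orbit of the surgery support, so the transcendental analogue of the Fundamental Lemma (Theorem~\ref{fl}) applies.

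I would then straighten $G$. Its glued Beltrami coefficient $\mu$ is $G$-invariant with $\|\mu\|_{\infty}<1$, so the Measurable Riemann Mapping Theorem furnishes a quasiconformal $\phi$ solving the Beltrami equation, whence $f=\phi\circ G\circ\phi^{-1}$ is holomorphic. As $\mathrm{supp}\,\mu$ lies in a bounded set whose backward orbit avoids a neighbourhood of $\infty$, the map $\phi$ is conformal near $\infty$, so $f$ inherits the order-one essential singularity of $g$; the conjugacy carries the double zero at $0$, the glued pole and the glued superattracting $p$-cycle over to $f$, and it sends the $p$-cycle of annuli to a $p$-cycle of Herman rings of rotation number $\theta$, because $\phi$ is conformal on the rings and conjugacy preserves rotation numbers. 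Matching the zero/pole data and the exponential part then identifies $f$, up to an affine normalisation absorbed into the parameters, with $u\,\frac{z^{2}(z-b)}{z-a}\,e^{z}$ for suitable $a,b$ and $u=u(a,b)\neq0$.

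The step I expect to be hardest is this final identification. Quasiconformal conjugacy to $g$ does not by itself force $f$ into the family $\{u\,z^{2}(z-b)(z-a)^{-1}e^{z}\}$, since an order-one transcendental meromorphic map need not be rational times $e^{cz}$. Making the identification rigorous requires tight control of the value distribution and singular values of $G$ --- showing that, after straightening, $f$ has exactly one pole, precisely the double zero at $0$ and one further simple zero, and an exponential factor of a degree-one polynomial, with no spurious singular values landing on the rings. A secondary difficulty is establishing the transcendental form of the Fundamental Lemma, since Theorem~\ref{fl} is stated for quasirational maps and must be extended to the infinite-degree, single-essential-singularity setting of the class $M$.
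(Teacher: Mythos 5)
You should know at the outset that the paper does not prove this theorem: it is a survey statement quoted from Yang \cite{yang}, and the only guidance the paper gives is that the construction proceeds by quasiconformal surgery starting from the entire model $g(z)=\lambda z^{2}e^{z}$ (Section \ref{quasi}). Your proposal follows exactly that recipe, so at the level of strategy you are on the same route as the cited source. The assessment below is therefore of your sketch against the surgery scheme the paper itself outlines.

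Two steps carry genuine gaps. First, the gluing is misdescribed. In the Shishikura scheme of Section \ref{quasi}, what is glued into each disc $B(\gamma_{i})$ is not merely ``a model carrying a pole and one point of a superattracting cycle'': one needs maps $f_{1},\dots,f_{p}$ whose composition has a Siegel disc with an invariant curve of rotation number $-\theta$, matched to $\gamma_{i}$ by quasiconformal maps that are conformal near the curves and intertwine the two dynamics on them. The sign reversal of the rotation number is the compatibility condition that makes the glued map well defined across $\gamma_{i}$ and turns the annulus between the matched curves into a Herman ring of rotation number $\theta$; your phrase ``matched to the rotation across $\gamma_{i}$'' does not capture this, and the pole and the superattracting $p$-cycle must be placed inside the inner Siegel discs of the $f_{i}$ so that their singular orbits do not land on the rotation domains. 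Second --- and you flag this yourself --- the identification of the straightened map with the explicit family $u\,z^{2}(z-b)(z-a)^{-1}e^{z}$ is the actual content of the theorem, not terminal bookkeeping. Your supporting claim that $\phi$ is conformal near $\infty$ is not justified: the invariant Beltrami coefficient is supported on the full backward orbit of the surgery region, which for a transcendental map accumulates at the essential singularity and at the prepoles, so conformality at $\infty$ fails and the order and form of the exponential factor must be recovered by a separate normalization and value-distribution argument (this is precisely the delicate point in the transcendental surgeries of \cite{pdnf,nj}). A proposal that defers both the $-\theta$ compatibility of the gluing and the identification step reproduces the skeleton of the known construction but does not yet constitute a proof.
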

\begin{figure}[H]
	\centering
	
	\includegraphics[width=14.5cm,height=6.5cm,angle=0]{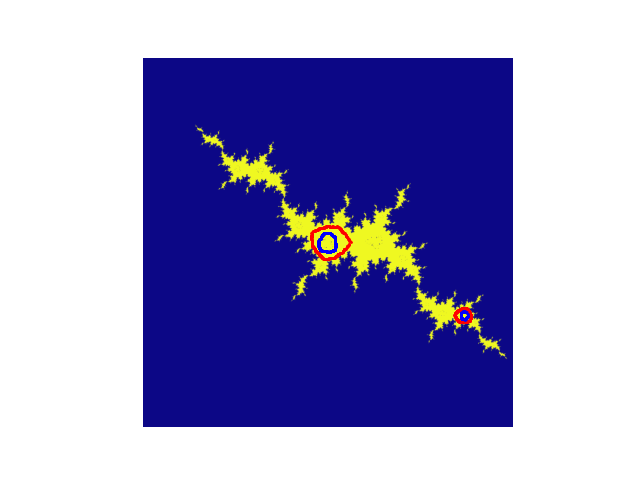}  
	
	\caption{$2-$ cycle of Herman ring of $uz^2\frac{z-a}{1-az}+b$ for $a=4$ and $b=11.03308-i 5.9193$ and $u=\frac{ab-1}{b(b-a)}$. The blue and red curves denote the boundaries of the Herman rings (the orbits of $\frac{3+a^2\pm \sqrt{a^4-10a^2+9}}{4a}$).}
	
	\label{hr2} 
\end{figure}
 
\subsection{Meromorphic functions with an omitted value}

 A point $z_{0}\in \widehat{\mathbb{C}}$ is said to be an omitted value for the function $f$ if $f(z)\neq z_{0}$ for any $z\in\mathbb{C}$. An omitted value is always an asymptotic value, but the converse need not be true. We denote the set of omitted values of $f$ as $O_f$. The dynamics of a meromorphic function is vastly affected by the presence of an omitted value, which is shown by Nayak et al. \cite{tk-zheng} in $2011$.
  Let ${M_{o}}$ be the set of functions in $M$ having at least one omitted value. Various dynamical properties of the classes ${M_{o}}$ have been studied in \cite{tk-zheng}. Nayak \cite{tk} in $2016$ investigates the existence of the Herman ring of meromorphic functions with at least one omitted value. Recall that a  Herman ring is a doubly connected periodic Fatou component. But the converse is not at all obvious. Bolsch asked the following question, which to date remains open.\\
\textbf{Question:} Is any doubly connected periodic Fatou component of a meromorphic function always a Herman ring?\\
This is known to be true when the period is one. It is given by Baker et al. \cite{inb} in $1992$. Nayak \cite{tk} in $2016$ has proved that for every $f\in M_0$, each doubly connected periodic Fatou component is a Herman ring. 

 A Jordan curve $\gamma$ is said to be non-contractible on a multiply connected Fatou component if it can not be reduced to a point under the continuous deformation of curves on the Fatou component. This is so as the bounded complementary component of $\gamma$ contains a Julia point. The following lemma, proved by Nayak et al. \cite{tk-zheng} in $2011$, is useful to track down the behavior of the Herman rings of functions with an omitted value.
\begin{lemma}\label{surroundingapole}
	 Let $V$ be a multiply connected Fatou component of $f$. Also suppose that $\gamma\subset V$ is a closed curve with $B(\gamma)\cap\mathcal{J}(f)\neq \phi.$ Then there exists an $n\in N\cup \{0\}$ and a closed curve $\gamma_{n}\subseteq f^{n}(\gamma)$ in $V_{n}$ such that $B(\gamma_{n})$ contains a pole of $f$. Further, if $O_{f}\neq \phi,$ then $O_{f}\subset B(\gamma_{n+1})$ for some closed curve $\gamma_{n+1}\
	\mbox{contained\ in}\ f(\gamma_{n}).$
\end{lemma}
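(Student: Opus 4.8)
The plan is to follow the forward orbit of a Julia point trapped inside $\gamma$, keeping at each stage a closed curve in the image that still encircles a Julia point, and to argue that this trapped point must eventually be encircled together with a pole; the assertion about $O_f$ will then drop out of a direct winding-number computation. First I would fix a point $w_0 \in B(\gamma) \cap \mathcal{J}(f)$, set $\gamma_0 = \gamma$ and $w_n = f^n(w_0)$, and note that, since $\mathcal{J}(f)$ is completely invariant, every $w_n$ lies in $\mathcal{J}(f)$.

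The inductive step is the engine of the construction. Assume $\gamma_n$ is a Jordan curve lying in the Fatou set with $w_n \in B(\gamma_n)$ and that $B(\gamma_n)$ contains no pole. Then $f$ is holomorphic on $\overline{B(\gamma_n)}$, so the argument principle gives that the winding number of $f(\gamma_n)$ about $w_{n+1}=f(w_n)$ equals the number of $f$-preimages of $w_{n+1}$ inside $B(\gamma_n)$, which is at least one because $w_n$ is such a preimage. Hence $w_{n+1}$ lies in a bounded complementary component of the closed curve $f(\gamma_n)$, and extracting a Jordan subcurve $\gamma_{n+1}\subseteq f(\gamma_n)$ enclosing $w_{n+1}$ furnishes the next curve; here $\gamma_{n+1}\subseteq f(\gamma_n)\subseteq\mathcal{F}(f)$ lies in a single Fatou component $V_{n+1}$ since $\gamma_n$ does and $\mathcal{F}(f)$ is forward invariant. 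Thus, as long as no pole is captured, the process continues with $w_{n+1}\in f(B(\gamma_n))\cap\mathcal{J}(f)$ and $w_{n+1}\in B(\gamma_{n+1})$.

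The main obstacle is to show that this cannot go on forever, i.e. that some $B(\gamma_n)$ must contain a pole. I would argue by contradiction: if every $B(\gamma_n)$ is pole-free, then each $\gamma_n$ lies in $\mathcal{F}(f)$, where $\{f^m\}$ is normal, and I would try to push this normality from the curves to the enclosed disc $B(\gamma_0)$, contradicting $w_0\in B(\gamma_0)\cap\mathcal{J}(f)$. Along a subsequence, $f^{m_k}$ is either uniformly bounded on $\gamma_0$ or tends uniformly to $\infty$ there; in the bounded case, provided $f^{m_k}$ is holomorphic on $B(\gamma_0)$, the maximum modulus principle bounds $f^{m_k}$ on $B(\gamma_0)$ and Montel's theorem yields normality on $B(\gamma_0)$, the desired contradiction, while in the escaping case the curves $f^{m_k}(\gamma_0)$ collapse toward $\infty$, forcing the enclosed Julia points $w_{m_k}$ to accumulate only at $\infty$. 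The delicate point — and the step I expect to cost the most work — is that the pole-free hypothesis on the enclosures $B(\gamma_n)$ must be upgraded to pole-freeness of the full forward images $f^n(B(\gamma_0))$ before the maximum principle can be invoked, and that the case in which the iterates converge to a constant lying in $\mathcal{J}(f)$ (as for parabolic or Baker-type behaviour) must be excluded; here one has to exploit the finer structure of multiply connected Fatou components of meromorphic maps, namely that the multiple connectivity of $V$ itself forces the encircling curves to sweep across a pole.

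Finally, for the statement about the omitted value I would invoke the argument principle once more, now with a pole available. Given a pole in $B(\gamma_n)$ and an omitted value $o\in O_f$, the equation $f(z)=o$ has no solution whatsoever, so the winding number of $f(\gamma_n)$ about $o$ equals $-P$, where $P\geq 1$ is the number of poles of $f$ in $B(\gamma_n)$; in particular it is nonzero, and since $o$ is omitted we have $o\notin f(\gamma_n)$. Therefore every omitted value lies in a bounded complementary component of $f(\gamma_n)$, and taking $\gamma_{n+1}$ to be the boundary of the unbounded complementary component of $f(\gamma_n)$ — a closed curve contained in $f(\gamma_n)$ — gives $O_f\subset B(\gamma_{n+1})$. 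As $f$ attains $\infty$ at its pole, $\infty\notin O_f$, so this captures all of $O_f$, completing the argument.
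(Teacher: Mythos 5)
The paper itself does not prove this lemma; it quotes it from Nayak--Zheng \cite{tk-zheng}, so your proposal can only be judged on its own terms. Your inductive step (argument principle applied to $f-w_{n+1}$ on the pole-free Jordan domain $B(\gamma_n)$, giving nonzero winding number of $f(\gamma_n)$ about $w_{n+1}$) and your treatment of the omitted value at the end (winding number of $f(\gamma_n)$ about $o\in O_f$ equals $-P\le -1$ because $f-o$ has no zeros, so $O_f$ sits in a bounded complementary component of $f(\gamma_n)$) are both correct and are exactly the standard computations. The problem is the step you yourself flag as "the main obstacle": you never actually prove that the induction must capture a pole. The contradiction you sketch does not close. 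In the bounded case, maximum modulus plus Montel does work (and the worry about the limit constant lying in $\mathcal{J}(f)$ is a red herring --- a uniformly bounded subsequence on $B(\gamma)$ is normal there regardless of where its limit lives). But in the escaping case, "the enclosed Julia points accumulate only at $\infty$" is not a contradiction: local uniform convergence of iterates to $\infty$ is perfectly compatible with normality, and indeed this is precisely what happens on Baker's multiply connected wandering domains of \emph{entire} functions, for which the conclusion of the lemma is false. So any correct proof must use the poles of $f$ in an essential way, and your argument, as written, never does; appealing to "the finer structure of multiply connected Fatou components" is not a proof.

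There are two standard ways to close the gap, and you should be aware of the cleaner one. (i) Patch your normality argument: in the escaping case, let $R_k=\min_{\gamma}|f^{n_k}|\to\infty$ and note that the winding number of $f^{n_k}(\gamma)$ about points of the disc $\{|w|<R_k\}$ is constant on that disc; if it is zero then $f^{n_k}\to\infty$ uniformly on $B(\gamma)$ and the (sub)family is normal there, while if it is nonzero then $f^{n_k}(B(\gamma))\supset\{|w|<R_k\}$ eventually contains a pole, which then lies in a bounded complementary component of $f^{n_k}(\gamma)$ --- exactly the conclusion you are trying to reach. (ii) Better: for $f$ in the class $M$ one has $\mathcal{J}(f)=\overline{\bigcup_{n\ge 0}f^{-n}(\infty)}$ (Baker--Kotus--L\"u), so instead of an arbitrary Julia point you may choose $w_0\in B(\gamma)$ to be a prepole of some finite order $m$. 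Your own inductive step then carries a prepole of strictly decreasing order forward, and after at most $m-1$ steps the enclosed point \emph{is} a pole; no normality argument is needed at all, and the role of the hypothesis $f\in M$ becomes transparent. A final minor caution: images of Jordan curves need not be Jordan, so either phrase the induction in terms of winding numbers of closed curves (as the statement's notion of $B(\cdot)$ permits) or justify the extraction of the subcurve more carefully.
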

\begin{figure}[H]
  \centering
  {\includegraphics[width=14.5cm,height=6cm,angle=0]{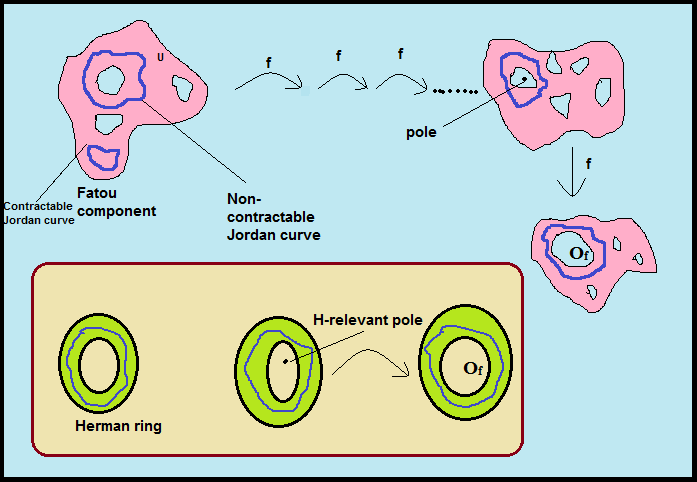}}
  \caption{Forward images of a non-contractible Jordan curve.}
  \end{figure}

It follows from the above lemma that given a Herman ring $H$, there exist rings that surround $O_f$. The following theorem gives a sufficient condition for the non-existence of the Herman ring of a function.
\begin{theorem}
	If $f\in M_o$ has only one pole, then $f$ has no Herman rings. 
\end{theorem}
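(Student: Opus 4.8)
The plan is to argue by contradiction, exploiting the interplay between the unique pole, the omitted value, and the rotational structure of the ring. Assume $f\in M_o$ has a single pole $p_0$ and a Herman ring $H=H_0$ of period $p$, with cycle $\{H_0,\dots,H_{p-1}\}$. Fix an $f^p$-invariant Jordan curve $\gamma_0\subset H_0$ and put $\gamma_i=f^i(\gamma_0)\subset H_i$; each $\gamma_i$ is a non-contractible Jordan curve, so $B(\gamma_i)\cap J(f)\neq\emptyset$. Since $f$ has a pole, the value $\infty$ is attained, hence $\infty\notin O_f$ and every omitted value $a\in O_f$ is finite. The bookkeeping quantity I would track is, for each $i$, which of the two complementary discs of $\gamma_{i+1}$ the image $f(B(\gamma_i))$ equals.

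First I would establish a single-step dichotomy. Because $\gamma_i$ is a bounded Jordan curve, $\overline{B(\gamma_i)}$ is a compact subset of $\mathbb{C}$ avoiding the essential singularity, so $f$ is continuous and open there; hence $f(B(\gamma_i))$ is open with $\partial f(B(\gamma_i))\subseteq f(\gamma_i)=\gamma_{i+1}$, i.e. $f(B(\gamma_i))$ is a union of complementary components of $\gamma_{i+1}$. It cannot be both components, since then it would contain the omitted value $a\ (\notin\gamma_{i+1})$. Thus $f(B(\gamma_i))$ is exactly one disc, and it is the unbounded complementary component $B^{*}(\gamma_{i+1})$ precisely when $\infty\in f(B(\gamma_i))$, that is, precisely when $B(\gamma_i)$ contains the unique pole $p_0$. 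Equivalently, $f$ interchanges the two boundary components of $H_i$ iff $p_0\in B(\gamma_i)$, and in that case $a\in B(\gamma_{i+1})$. This recovers the mechanism of Lemma \ref{surroundingapole} and in particular yields some $n$ with $p_0\in B(\gamma_n)$ and $O_f\subset B(\gamma_{n+1})$.

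Next I would extract a parity constraint. Since $f^{p}|_{H_0}$ is conformally conjugate to an irrational rotation of an annulus, it preserves (does not interchange) the two boundary components of $H_0$; therefore the total number of interchanges accumulated around the cycle $\gamma_0\to\gamma_1\to\cdots\to\gamma_p=\gamma_0$ is even. By the single-step dichotomy these interchanges occur exactly at the indices of $P:=\{\,i:\ p_0\in B(\gamma_i)\,\}$, so $|P|$ is even; and since $P\neq\emptyset$ by the previous step, the unique pole $p_0$ lies in the interior $B(\gamma_i)$ of at least two distinct, necessarily nested, curves of the cycle.

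The remaining task, and the main obstacle, is to convert \textquotedblleft the unique pole is enclosed an even (hence $\ge 2$) number of times\textquotedblright{} into a contradiction. When $p=1$ this is immediate: parity forces $|P|=0$, i.e. $p_0\notin B(\gamma_0)$, whereas Lemma \ref{surroundingapole} forces $p_0\in B(\gamma_0)$ — a direct contradiction, in the same spirit as the blow-up argument proving Theorem \ref{nohr}, where $f(B(\gamma_0))=B(\gamma_0)$ is incompatible with $B(\gamma_0)\cap J(f)\neq\emptyset$. For $p\ge 2$ no such shortcut is available, because $f^{p}$ acquires essential singularities at the prepoles lying in $B(\gamma_0)$ (points sent to $\infty$ before the last iterate), so $f^{p}(B(\gamma_0))$ need not be a single disc and the one-step analysis cannot simply be iterated. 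Here I expect the uniqueness of the pole to be decisive: the plan is to examine the innermost member of the nested family $\{\,\gamma_i:\ p_0\in B(\gamma_i)\,\}$ together with the way $f$ cyclically permutes this family, and to show by a winding-number/degree count that a single pole — feeding the single essential singularity — cannot supply the even number $|P|\ge 2$ of boundary interchanges demanded by the rotation while trapping the omitted value at each interchange. Making this degree bookkeeping precise, equivalently ruling out the nested periodic configurations that a genuine multi-pole map in $M$ is permitted to realise, is where I expect the real difficulty to lie.
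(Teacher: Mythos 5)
Your preparatory steps are sound: the single-step dichotomy (that $f(B(\gamma_i))$ is exactly one complementary component of $\gamma_{i+1}$, the unbounded one precisely when $p_0\in B(\gamma_i)$, in which case $O_f\subset B(\gamma_{i+1})$) is correct and is essentially Lemma \ref{surroundingapole} specialised to a one-pole map; the parity observation that $f^p|_{H_0}$ preserves the two ends of the annulus, so that $|P|$ is even, is also correct and does dispose of $p=1$. But for $p\geq 2$ the argument stops exactly where the theorem begins: you establish only that the unique pole lies in $B(\gamma_i)$ for an even number $\geq 2$ of indices, hence in a nested family of curves of the cycle, and you defer the contradiction to an unexecuted ``winding-number/degree count.'' That deferred step is the entire content of the theorem, so as written this is a programme, not a proof.

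The gap is not cosmetic, because the configuration you reach is internally consistent at the level of the bookkeeping you have set up. If $i,j\in P$ with $B(\gamma_j)\subsetneq B(\gamma_i)$, your dichotomy gives $f(B(\gamma_i))=B^{*}(\gamma_{i+1})$ and $f(B(\gamma_j))=B^{*}(\gamma_{j+1})$, hence $B(\gamma_{i+1})\subseteq B(\gamma_{j+1})$, and the pole-free annulus $B(\gamma_i)\setminus\overline{B(\gamma_j)}$ maps onto $B(\gamma_{j+1})\setminus\overline{B(\gamma_{i+1})}$ with the omitted value sitting harmlessly inside $B(\gamma_{i+1})$; no degree count on this local picture clashes. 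Worse for the proposed endgame, the paper itself exhibits a transcendental meromorphic map with a \emph{single} pole carrying a $2$-cycle of nested Herman rings, namely $\frac{a-b}{be^{b}}\frac{z^{2}}{z-a}e^{z}+b$ (Figure \ref{hr3}); it simply has no omitted value. So the phenomenon ``one pole enclosed an even number of times by a rotation cycle'' is geometrically realisable, and any correct completion must use the omitted value a second time and more deeply than the dichotomy does. The proof reported by the survey (from \cite{tk}) does exactly that: its decisive ingredient is the nontrivial lemma that $f$ is injective on the bounded complementary component $B(H)$ of a Herman ring of a map in $M_o$, combined with the $H$-maximal nest analysis (each nest carries at most one $H$-relevant pole). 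That injectivity statement, or a substitute for it, is what is missing from your argument; until it is supplied, the case $p\geq 2$ remains open.
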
 
\begin{ex}
	For each entire map $g$ and non-zero complex number $z_0$, the map $\frac{e^{g(z)}}{(z-z_0)^k}$ is meromorphic with only a single pole $z_0$ which is different from its omitted value, where $k$ is a natural number. By the above theorem, it has no Herman rings.
\end{ex}
Every cycle of Herman rings of $f\in M_o$ contains a ring that encloses a pole (called an $H$-relevant pole), and the forward image of this ring encloses $O_f$. It is proved that if $H$ is a Herman ring of $f\in M_o$, then $f: B(H) \rightarrow \widehat{\mathbb{C}}$ is one-to-one \cite{tk}. Thus, the number of $H$-relevant poles is less than or equal to that of $H$-maximal nests, and each maximal nest contains at most one pole. As a consequence of the above, we have that if all the poles of a function belonging to $M_o$ are multiple, then it has no Herman rings. Using the Herman ring's basic properties, the Herman ring's non-existence of some classes of meromorphic maps is also shown in \cite{small}.  
\begin{ex}
	\begin{enumerate}
		\item 
		For any polynomial $P(z)$ with real coefficients having no real root, the function $f(z)=\frac{e^z}{P(z)}$ has no Herman rings of any period.
		\item For any polynomial $P(z)$ having all real roots, the function $f(z)=\frac{e^z}{P(z)}$ has no Herman rings of any period.
	\end{enumerate}
\end{ex}

\subsection{Configurations of Herman rings}

In this section, we describe the configurations of Herman rings of a function with an omitted value. Note that a ring $H_i$   in an $H$-maximal nest N is called innermost if it does not surround $H_j$ for any $j\neq i$.  It is clear from the definition of maximal nest that a Herman ring $H$ is nested if there is only one $H$-maximal nest. A possible configuration of Herman rings of a function with an omitted value is shown in Figure \ref{figch22}.

\begin{figure}[H]
	\centering
	{\includegraphics[width=14cm,height=7cm,angle=0]{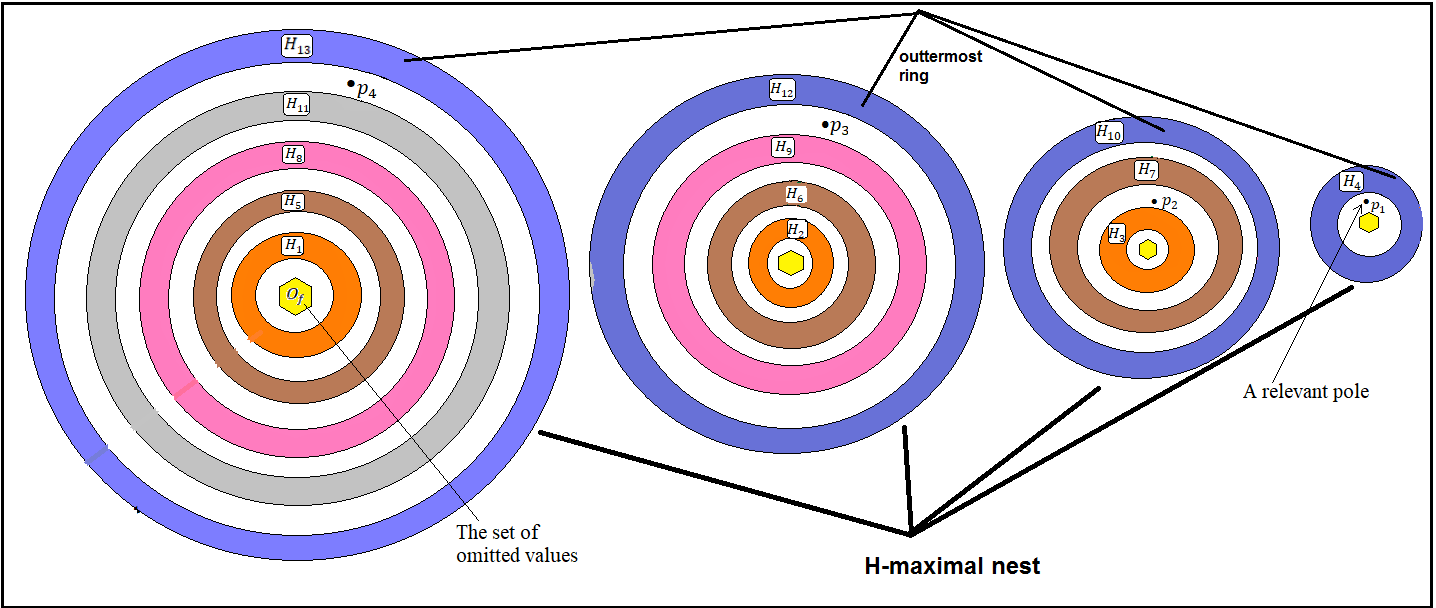}}
	\caption{$H$-maximal nest. }\label{figch22}
\end{figure}

Note that each Herman ring of period 2 is either nested or strictly non-nested.  The following result discussed about the non-existence of nested or strictly non-nested Herman rings.
\begin{theorem}
	If $f\in M_0$, then $f$ has no Herman rings that is nested or strictly non-nested, and in particular, it has no Herman rings of period one and two. Further, if a pole of $f$ is an omitted value, it has no Herman rings of any period.
\end{theorem}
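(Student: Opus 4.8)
The plan is to reduce every clause of the statement to a single dichotomy for the action of $f$ on the bounded complementary component $B(H)$ of a ring $H$, writing $U(H)$ for the unbounded complementary component. I would use two facts quoted just above the theorem: $f|_{B(H)}$ is univalent for every Herman ring $H$ of $f\in M_o$, and, by Lemma~\ref{surroundingapole}, every cycle contains a ring whose hole carries a pole and whose image encloses $O_f$. First I would prove the \emph{flip dichotomy}. Fix $H_i$ with $H_{i+1}=f(H_i)$; since $f|_{B(H_i)}$ is univalent it is a homeomorphism onto the disc $f(B(H_i))$, whose boundary is one boundary curve of $H_{i+1}$ and which, by local injectivity of $f$ along that curve, lies on the side opposite to $H_{i+1}=f(H_i)$. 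If $B(H_i)$ contains no pole then $f(B(H_i))$ is bounded, so it equals $B(H_{i+1})$ and $f$ sends the inner boundary of $H_i$ to the inner boundary of $H_{i+1}$ (no flip); if $B(H_i)$ contains a pole then $\infty\in f(B(H_i))$, forcing $f(B(H_i))=U(H_{i+1})$ and sending the inner boundary of $H_i$ to the \emph{outer} boundary of $H_{i+1}$ (a flip). By univalence such a pole is simple and unique in $B(H_i)$.

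Next I would extract a parity constraint. Each $f\colon H_i\to H_{i+1}$ either preserves or interchanges the two boundary circles, a flip being exactly an interchange; since the first return $f^{p}$ on a $p$-periodic ring is conjugate to an irrational rotation it preserves each boundary circle, so the number of flips around the cycle, that is the number of rings whose hole contains a pole, is even. Period one dies at once: an invariant ring would have exactly one flip (its hole carries a pole), and $1$ is odd. For the rest, set $P=\{i:B(H_i)\text{ contains a pole}\}$ and $A=\{i:O_f\in B(H_i)\}$. The quoted fact gives $P+1\subseteq A$; conversely, if $i\in A$ and step $i-1$ were a no-flip then $f(B(H_{i-1}))=B(H_i)\ni O_f$, so the omitted value would be attained. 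Hence $i-1\in P$, and therefore $A=P+1$ with $|A|=|P|$.

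For a strictly non-nested cycle the holes are pairwise disjoint, so the even number $|P|\ge 2$ of flip rings yields, through $P+1\subseteq A$, at least two distinct rings enclosing $O_f$, whose holes then share the point $O_f$ --- contradicting disjointness. This excludes strictly non-nested cycles; combined with the nested case it will finish period two, every period-two cycle being nested or strictly non-nested. It remains to treat nested cycles. There the unique outermost ring $H_j$ contains all others, so by the bound of at most one pole per maximal nest there is exactly one relevant pole $q$; applying the dichotomy to $H_j$, whose hole $B(H_j)$ contains $q$, shows $f$ maps $B(H_j)$ univalently onto $U(H_{j+1})$, which forces $H_{j+1}$ to be innermost. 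I would then combine this with $A=P+1$: the flip rings all enclose the single pole $q$ and so form one containment chain, while the rings enclosing $O_f$ form the shifted chain.

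The step I expect to be hardest --- and the crux of the theorem --- is exactly this nested case: one must show that the \emph{dynamical} cyclic order $H_i\mapsto H_{i+1}$ cannot be reconciled with the \emph{containment} order under the three constraints that flips reverse containment while no-flips preserve it, that $|A|=|P|$ is even, and that only a single pole is available; the soft topology above does not suffice, and I anticipate needing a winding-number or index computation to close it. Granting the nested exclusion, the last clause is a corollary. If a pole $p$ is itself the omitted value, then a ring enclosing $O_f=p$ encloses the pole $p$, so $A\subseteq P$; with $A=P+1$ this gives $P+1\subseteq P$, hence $P=\mathbb{Z}/p\mathbb{Z}$ and every ring of the cycle encloses $p$. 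Finitely many disjoint rings all surrounding the common point $p$ are totally ordered by containment, so the cycle is nested --- already excluded --- and $f$ has no Herman rings of any period.
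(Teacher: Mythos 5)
The paper itself does not prove this theorem (it is quoted from \cite{tk,small}, with only a one-sentence description of the method), so your proposal can only be measured against the ingredients the survey supplies: Lemma \ref{surroundingapole}, the injectivity of $f$ on $B(H)$, and the bound of one pole per maximal nest. Against that standard, your combinatorial frame is sound: the flip dichotomy is correct for $f\in M_o$ (indeed the omitted value alone forces $f(B(\gamma_i))$ to be exactly one complementary component of $\gamma_{i+1}$, since otherwise $f(B(\gamma_i))$ would contain $O_f$); the parity of flips, the identity $A=P+1$, and the resulting exclusions of period one and of strictly non-nested cycles are all correct; and the reduction of the final clause to the nested case via $A\subseteq P$, $P+1=P$ is fine.

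The genuine gap is exactly the one you flag: the nested case is never proved, and it cannot be closed by more of the same bookkeeping, because your combinatorial data are \emph{consistent} with a nested cycle. For a nested $2$-cycle your own analysis yields $P=A=\{0,1\}$, two flips, $H_0$ outermost, $H_1$ innermost, the single relevant pole and $O_f$ both in $B(H_1)$ --- no contradiction, so no winding-number refinement of the parity argument will finish it. The missing idea is the analytic fact you quote but never deploy: $f$ is injective on $B(H)$. That is the decisive tool for nested configurations in \cite{tk}. For instance, in the nested $2$-periodic case, with $\gamma_1=f(\gamma_0)$, $f(\gamma_1)=\gamma_0$ and $D_i=B(\gamma_i)$, the two flips give $f(D_0)=\widehat{\mathbb{C}}\setminus\overline{D_1}$ and $f(D_1)=\widehat{\mathbb{C}}\setminus\overline{D_0}$ with $D_1\subset D_0$; injectivity of $f$ on $D_0$ then forces $f$ to map the doubly connected region $W=D_0\setminus\overline{D_1}$ conformally onto itself, so $\{f^n\}$ is normal on $W$, contradicting the fact that $W$ contains the inner boundary of $H_0$ and the outer boundary of $H_1$, which lie in the Julia set. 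An argument of this kind, combined with your (correct) observation that the outermost ring of a nested cycle is a flip ring whose image is innermost, is what handles general nested cycles. Since the nested case carries the nested clause, the nested half of period two, and --- by your own reduction --- the entire final clause about a pole being an omitted value, the proposal as written establishes strictly less than the theorem.
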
\label{nested&non-nested}
The proof contains a detailed analysis of the possible arrangements of Herman rings relative to each other in the plane. The locations of the omitted value(s) and poles surrounded by Herman rings have also been the key to several useful observations. For a Herman ring $H$ of a function belonging to $M_0$, if there are only two $H$-maximal nests, one of which consists of only one ring and the other is strictly nested, then $H$ is odd periodic \cite{small}.   Later, these observations are used to show that if $f\in M_o$ and $p=3$ or $4$, then the number of $p$-cycles of Herman rings is at most one. Furthermore, the simultaneous existence of a $3$-periodic and a particular type of $4$-periodic Herman rings is also ruled out for any function in $M_o$.  Possible configurations of a cycle of Herman rings of period $3$ are described in the following lemma (Lemma $3.1$, \cite{small}).
\begin{lemma}\label{3-periodic}
	Let $H$ be a $3$-periodic Herman ring of $f\in M_o$ and $\{H=H_0,H_1,H_2\}$ be the cycle containing $H$. Then there are exactly two $H$-relevant poles, say $p_1,p_2$, and two $H$-maximal nests, say $N_1$, $N_2$, containing one and two rings respectively. If $H_1\in N_1 $, then $H_2$ is the outermost ring of $N_2$ and $H$ is the innermost ring of $N_2$. Further, $p_1\in B(H_1)$, $p_2\in B(H_2)\setminus B(H_0)$ and $O_f\subset B(H_0)$ (see Figure \ref{fig:3periodic}).
\end{lemma}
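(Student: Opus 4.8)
The plan is to first nail down the combinatorial picture and then let the omitted value, together with the way $f$ transports the bounded complementary components around the cycle, determine everything. First I would fix the configuration. Because $f\in M_o$ admits no nested and no strictly non-nested cycle of Herman rings, the $3$-cycle $\{H_0,H_1,H_2\}$ is of neither type. The three boundary curves are pairwise disjoint, so $\{B(H_0),B(H_1),B(H_2)\}$ is a laminar family: any two of these Jordan domains are disjoint or nested. The only laminar pattern on three disks that is neither ``one disk containing the other two'' (nested) nor ``three pairwise disjoint disks'' (strictly non-nested) is ``two nested disks and a third disjoint from both.'' Hence there are exactly two $H$-maximal nests: a singleton $N_1=\{A\}$ and a nested pair $N_2=\{C,D\}$ with $D\subset B(C)$, where $B(A)$ is disjoint from $B(C)$.

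Next I would record a pole/boundary dichotomy together with a parity constraint. For a ring $H$ write $E(H)=\widehat{\mathbb{C}}\setminus\overline{B(H)\cup H}$ for its unbounded complementary component, and let $\gamma_i^-=\partial B(H_i)$ and $\gamma_i^+$ be the inner and outer boundary curves of $H_i$. Since $f\colon B(H_i)\to\widehat{\mathbb{C}}$ is one-to-one and $f$ carries $H_i$ homeomorphically onto $H_{i+1}$, taking $\gamma_i^-$ onto one boundary curve of $H_{i+1}$, the injective image $f(B(H_i))$ must be one complementary disk of that curve, and it contains $\infty$ exactly when $B(H_i)$ contains a (then unique) pole. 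This yields the dichotomy: $f(B(H_i))=B(H_{i+1})$ when $B(H_i)$ has no pole, and $f(B(H_i))=E(H_{i+1})$ when $B(H_i)$ encloses a pole. As $f^{3}|_{H_0}$ is conjugate to a rotation, it preserves the inner boundary; hence around the cycle the inner and outer boundary curves are interchanged an even number of times, i.e. the number of rings $H_i$ with a pole in $B(H_i)$ is even. Since every cycle of Herman rings of $f$ encloses at least one pole, this number is exactly two. Together with the fact that each maximal nest carries at most one pole, only two pole placements survive: (a) one pole in $B(A)$ and one in $B(C)\setminus B(D)$, or (b) a single pole in $B(D)$ (hence enclosed by both $D$ and $C$, with $B(A)$ pole-free).

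Then I would let the omitted value select the configuration and fix the cyclic order. Because $O_f$ is omitted, $O_f\notin f(B(H_i))$ for every $i$, so $O_f$ must avoid each image prescribed by the dichotomy. This leaves only the two placements (a),(b) times the two cyclic orders of $\{A,C,D\}$, four configurations in all, and for each I would substitute the images $B(\cdot)$ or $E(\cdot)$ and ask where $O_f$ may sit. In three of the four, $O_f$ is forced either out of $\widehat{\mathbb{C}}$ or simultaneously into two disjoint regions (for instance the pole-free ring sends its $B(\cdot)$ onto a disk that must at once contain and miss $O_f$), which is impossible; in particular this eliminates placement (b). The unique survivor is placement (a) with cyclic order $D\to A\to C\to D$, so that $p_1\in B(A)$, $p_2\in B(C)\setminus B(D)$, and, reading off $O_f\notin E(D)=f(B(C))$, one gets $O_f\subset B(D)$.

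Finally, imposing the hypothesis $H_1\in N_1$ identifies $A=H_1$, whereupon the order $D\to A\to C\to D$ forces $H_0=D$ and $H_2=C$. Thus $H_0=D$ is the innermost ring of $N_2$, $H_2=C$ is its outermost ring, $p_1\in B(H_1)$, $p_2\in B(H_2)\setminus B(H_0)$ and $O_f\subset B(H_0)$, as claimed. I expect the genuine difficulty to lie in the dichotomy: establishing rigorously that $f$ sends $\gamma_i^-$ onto a single boundary curve of $H_{i+1}$ with $f(B(H_i))$ landing on the correct complementary side (so that the ``inner-to-inner versus inner-to-outer'' alternative is exactly governed by the presence of a pole), for boundaries of Herman rings that may be highly irregular. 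Once that transport rule and the resulting parity count are secured, the four-way elimination using $O_f$ is routine but must be done carefully to confirm that the omitted value fits in one and only one configuration.
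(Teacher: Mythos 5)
The survey itself contains no proof of this lemma: it is quoted from Lemma~3.1 of \cite{small}, with only the remark that the argument rests on a detailed analysis of the possible arrangements of the rings and of the locations of the enclosed poles and the omitted value. Your proposal is a correct reconstruction along exactly those lines --- the exclusion of nested and strictly non-nested cycles forces the ``two nested plus one separate'' pattern with two maximal nests, the dichotomy $f(B(H_i))=B(H_{i+1})$ or $E(H_{i+1})$ according as $B(H_i)$ is pole-free or not, combined with the parity of boundary swaps under $f^3$ and the one-pole-per-nest bound, pins the pole count at exactly two, and the omitted value then eliminates all but one of the four labelled configurations, yielding precisely the stated picture. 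The step you flag as delicate (that $f$ sends $B(H_i)$ onto exactly one complementary component of $H_{i+1}$, determined by the presence of a pole) is exactly what the injectivity of $f$ on $B(H)$ and Lemma~\ref{surroundingapole}, both quoted in this section from \cite{tk,tk-zheng}, are designed to supply, so nothing essential is missing from your outline.
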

   
\begin{figure}[H]
\centering
\includegraphics[width=14.5cm,height=6cm,angle=0]{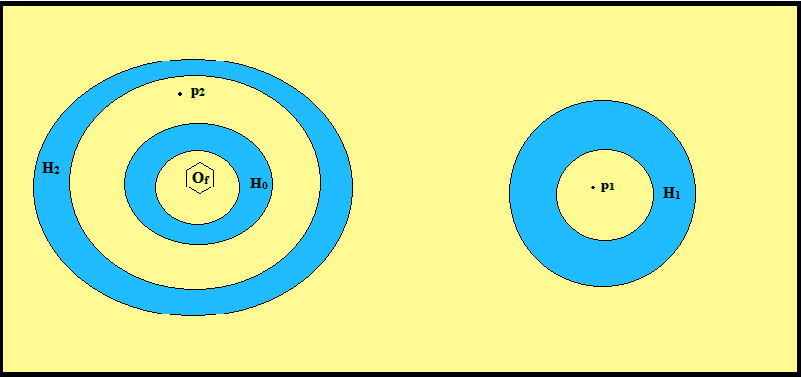}
\caption{Cycle of $3$-periodic Herman rings.}
\label{fig:3periodic}
\end{figure}
In \cite{gc}, authors shown that the possible number of configurations of a $5$-periodic Herman ring of a function in $M_o$ is six and there can not have two different cycles of $5$-periodic Herman rings simultaneously. Regarding possible configurations of Herman rings, we have the following question:\\
\textbf{Question:} Is there any bound on the possible number of configurations of an $n$-cycle of Herman rings of period $n\in\mathbb{N}$?

Chakraborty et al. \cite{bovg} in $2022$ proved that the existence of a  quasi-nested wandering domain of a meromorphic function $f$ with an omitted value ensures that there exists no unbounded Fatou component of $f$ giving that an unbounded Herman ring does not exist for $f$. We make the following conjecture.\\
\textbf{Conjecture:} Meromorphic functions with quasi-nested wandering domains have no Herman rings.
\par

Using the results of \cite{small,tk}, Ghora and Nayak have proved that a particular subset of rings (defined as a basic chain) of a cycle of Herman rings mostly controls the behavior of the cycle in \cite{relevantpole}. Based on that, they found a lower bound on the period of a Herman ring 
\begin{theorem}
    Let $H$ be a $p$-periodic Herman ring  of a function $f$, and let $h$ be the number of $H$-relevant poles. Then $p \geq \frac{h(h+1)}{2}$. In particular, the following are true.
\begin{enumerate}
\item If the basic nest (the nest surrounding the $O_f$) surrounds a pole, then $p\geq \frac{h(h+1)}{2}$.
\item If the basic nest does not surround any pole, then $p\geq \frac{h (h+3)}{2}$.
\end{enumerate}
\label{lowerbound}
\end{theorem}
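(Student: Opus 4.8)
The plan is to exploit the interplay between the injectivity of $f$ on the filled rings and the fact that $O_f$ is omitted, in order to convert geometric nesting data into a counting inequality (I take $f\in M_o$, as the statement involves $O_f$ and a basic nest surrounding it). Write the cycle as $\{H_0,H_1,\ldots,H_{p-1}\}$ with $f(H_i)=H_{i+1}$ (indices mod $p$), and for a ring $H$ let $B(H)$ be its bounded complementary component. The first step is to establish an inside--out dichotomy for the action of $f$ on each $B(H_i)$. Since $f:B(H)\to\widehat{\mathbb{C}}$ is one-to-one \cite{tk}, if $B(H_i)$ contains no pole then $f$ maps $B(H_i)$ homeomorphically onto $B(H_{i+1})$, preserving the enclosure relation; whereas if $B(H_i)$ contains a pole, then $f(B(H_i))$ is the \emph{unbounded} complementary component of $H_{i+1}$, so $f$ turns $B(H_i)$ inside out. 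Because each maximal nest carries at most one pole, this dichotomy is well defined ring by ring.

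The second step pins down the skeleton created by $O_f$. I would first record a clean consequence of omission: if $O_f\subset B(H_{i+1})$, then $H_i$ must be a pole ring, since a non-pole ring would force $O_f\in f(B(H_i))=B(H_{i+1})$, contradicting that $f$ never attains the value $O_f$. Combined with Lemma \ref{surroundingapole} in the forward direction (a pole ring is carried to a ring surrounding $O_f$), this shows that $f$ restricts to a bijection between the set of pole rings and the set of rings surrounding $O_f$, so these two sets share a common cardinality $s$. Since any two rings enclosing the common point $O_f$ are themselves nested, the rings surrounding $O_f$ form a single totally ordered chain --- the basic nest --- of length $s$, whose predecessor chain consists of the $s$ pole rings.

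The heart of the proof is an induction on $h$ establishing a recursion of the form $p(h)\ge p(h-1)+h$. The idea is to isolate a distinguished relevant pole --- the one attached to the outermost ring of the basic chain --- and to show that resolving it consumes a block of at least $h$ rings not reused in resolving the remaining $h-1$ poles. Concretely, I would follow the forward orbit starting from the innermost $O_f$-ring, using the dichotomy to track when enclosure of $O_f$ and of a given pole is gained or lost; each pole contributes one inside-out flip, and the fact that distinct maximal nests share no pole forces the associated ring-blocks to be disjoint. Collapsing the block of the distinguished pole should leave a consistent cycle with $h-1$ relevant poles, which by the inductive hypothesis uses at least $(h-1)h/2$ rings; adding back the $\ge h$ removed rings gives $p\ge h(h+1)/2$. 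The two cases then differ only at the base of the recursion: when the basic nest itself surrounds a pole, its outermost $O_f$-ring and a pole ring coincide (as in the $3$-periodic model, where one ring encloses both $O_f$ and a pole), so no extra ring is needed and the bound is $h(h+1)/2$; when the basic nest surrounds no pole, each of the $h$ poles requires its own dedicated ring disjoint from the $O_f$-chain, contributing an extra $h$ rings and yielding $h(h+3)/2$.

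The main obstacle I anticipate is the third step, specifically the claim that the ring-blocks attached to distinct relevant poles are pairwise disjoint and each of the required minimum length. The inside-out flips redistribute the enclosure relation in a way that is easy to misread, so the delicate point is to verify that collapsing one pole's block really produces a legitimate smaller cycle of Herman rings --- with its own well-defined basic nest and relevant poles --- rather than destroying the configuration, and that the count of removed rings is bounded below by $h$ rather than by something smaller. Making the notion of basic chain precise enough to support this induction, and checking that the non-existence of period one and two for $f\in M_o$ is exactly what rules out the degenerate short blocks, is where the real work lies; the preceding two steps are essentially bookkeeping built on injectivity and the omitted-value property.
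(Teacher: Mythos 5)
Your preparatory steps are sound and match the machinery this survey attributes to \cite{tk,small,relevantpole}: the inside--out dichotomy on $B(H_i)$ follows from the injectivity of $f$ on $B(H)$ for $f\in M_o$; the observation that the predecessor of any ring surrounding $O_f$ must be a pole ring is a correct complement to Lemma \ref{surroundingapole}; and the resulting totally ordered chain of $O_f$-surrounding rings is exactly the ``basic chain'' on which the cited proof is said to rest. Your bijection also checks out against the $3$-periodic configuration of Lemma \ref{3-periodic}. (Since the survey only states this theorem and refers to \cite{relevantpole} for its proof, the comparison here is against the strategy the survey describes; your setup does match it.)

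The gap is that the entire quantitative content of the theorem sits in your third step, which is asserted rather than proved. Three things are missing. First, you never relate the length $s$ of the basic chain to the number $h$ of relevant poles: several nested rings may surround the same relevant pole, so ``number of pole rings'' and ``number of relevant poles'' need not coincide, and charging a dedicated block of rings to each individual pole is precisely where the inequality $p\ge 1+2+\cdots+h$ has to come from. Second, the claims that the distinguished pole's block contains at least $h$ rings and that the blocks attached to distinct poles are pairwise disjoint are stated without argument; the inside--out flips can in principle interleave these blocks, and excluding that requires tracking, along the whole orbit, which rings separate which poles from $O_f$ --- this \emph{is} the theorem. Third, the induction cannot run over actual meromorphic functions, since ``collapsing'' a block does not produce a function with a smaller cycle of Herman rings; you would need to axiomatize the admissible combinatorial configurations (maximal nests, relevant poles, the $f$-induced cyclic permutation with its inside--out markings) and verify that the collapsed object still satisfies those axioms, including having a well-defined basic nest and one fewer relevant pole. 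You identify this yourself as ``where the real work lies,'' and I agree: as written, the proposal reconstructs the correct setup but not the counting argument that yields $p\geq \frac{h(h+1)}{2}$ or the sharper bound $\frac{h(h+3)}{2}$ in the second case.
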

They also found that the period of a Fatou component whose closure contained an omitted value depends on the length of the basic chain of a cycle of rings.
\begin{theorem}
Let  $U$ be a periodic Fatou component of $f$ such that its closure contains at least one omitted value. Then for every Herman ring $H$ of $f$, the number of $H$-relevant poles is strictly less than the period of $U$. In particular, if $U$ is invariant or $2$-periodic, then $f$ has no Herman rings.
\label{periodicfatoucomponent}
\end{theorem}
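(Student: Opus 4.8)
The plan is to control how the cycle of $U$ is forced to thread through the nested structure of a given cycle of Herman rings, exploiting that the omitted value lies in $\overline{U}$. Throughout, fix a Herman ring $H$ of period $p$ with cycle $\{H_0,\dots,H_{p-1}\}$ and $f(H_i)=H_{i+1}$ (indices mod $p$), let $h$ be the number of $H$-relevant poles, and let $q$ be the period of $U$ with cycle $\{U=U_0,\dots,U_{q-1}\}$. By the consequence of Lemma~\ref{surroundingapole} recorded above, some ring of the cycle encloses $O_f$; let $N_0$ be the basic nest (the maximal nest surrounding $O_f$) and let $H_*$ be its innermost ring, so $O_f\subset B(H_*)$. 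Choosing the omitted value $v\in O_f\cap\overline{U}$, I first observe that $U\subseteq B(H_*)$: since $v\in B(H_*)$, an open set, the component $U$ meets $B(H_*)$, and being connected and unable to cross $\partial B(H_*)\subset\mathcal{J}(f)$, it is contained in it.

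The next step is to set up the ``flip'' dictionary for the action of $f$ on the bounded complementary components along the cycle. Using that $f$ is injective on each $B(H_i)$ for $f\in M_o$ (proved in \cite{tk}), together with the fact that $B(H_i)$ contains a pole precisely when $H_i$ is one of the $h$ rings carrying a relevant pole, I obtain that $f(B(H_i))=B(H_{i+1})$ when $B(H_i)$ is pole-free, whereas $f(B(H_i))=\widehat{\mathbb{C}}\setminus\overline{B(H_{i+1})}$ when $B(H_i)$ contains a relevant pole (the pole being sent to $\infty$). This is the same mechanism underlying the polynomial case of Theorem~\ref{nohr}. Iterating from $U_0\subseteq B(H_*)$ and recording at which steps the orbit of $U$ crosses a relevant pole identifies precisely the basic chain of \cite{relevantpole}: each relevant pole forces one crossing, so to return $U$ to the basic nest the orbit must traverse all $h$ crossings.

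The heart of the argument is the strict inequality. Traversing the $h$ crossings advances the orbit of $U$ through the whole basic chain, which already consumes $h$ iterates; the extra, strict, iterate is supplied by the omitted value itself. Indeed $v$ is never attained by $f$, so the return of $U_0$ to the basic nest cannot be realized at the same step as a pole-crossing (such a step sends $B$-sides to the \emph{unbounded} side, away from $O_f$), and hence a further iterate is needed to carry the orbit back to the component whose closure contains $v$. This yields $q\ge h+1$, that is $h<q$, which is the assertion for every Herman ring $H$. Finally the ``in particular'' statement follows: every cycle of Herman rings of $f\in M_o$ encloses a pole, so $h\ge1$. If $U$ is invariant then $q=1$, forcing $h<1$, i.e.\ $h=0$, a contradiction, so no Herman ring exists. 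If $U$ is $2$-periodic then $q=2$ gives $h=1$; the cycle then carries a single relevant pole, and one checks via the basic-chain structure that such a cycle must be nested, contradicting the classification excluding nested Herman rings (Theorem~\ref{nested&non-nested}); again no Herman ring can exist.

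The step I expect to be the main obstacle is the combinatorial accounting in the third paragraph: rigorously proving that the orbit of $U$ advances exactly one notch along the basic chain per relevant pole, and that the omitted value genuinely forces a strictly additional iterate rather than merely a weak inequality. This relies on the precise description of the basic chain and its nesting order from \cite{relevantpole,small}, together with careful bookkeeping of which ring is innermost around each pole and around $O_f$; it is there that the delicate work lies, and it is also the ingredient needed to close the $q=2$ case cleanly.
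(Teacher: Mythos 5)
The survey states this theorem without proof (it is quoted from Ghora--Nayak \cite{relevantpole}), so there is no in-paper argument to measure you against; your proposal has to stand on its own, and as written it does not close. Your preparatory steps are sound: $U\subseteq B(H_*)$ because $v\in O_f\cap\overline{U}$ and $U$ is connected and disjoint from $\partial B(H_*)\subset \mathcal{J}(f)$, and the injectivity of $f$ on $B(H)$ together with the pole/no-pole dichotomy for images of bounded complementary components is the right toolkit. The gap is exactly where you flag it, and it is not a technicality --- it is the theorem. Your third paragraph asserts that the orbit of $U$ ``traverses all $h$ crossings'' of the basic chain within one period of $U$ and that the omitted value forces one strictly extra iterate; neither claim is established, and the first cannot be established by the forward-tracking scheme you set up. After the first pole-crossing, $U_{t+1}$ lies in the unbounded complementary component of the next ring, which contains the essential singularity and also contains the bounded complements of all rings outside that nest, so the flip dictionary gives no information about where $U_{t+2}$ sits relative to the cycle: the itinerary does not advance ``one notch per relevant pole.'' Likewise, ``return of $U_0$ to the basic nest'' conflates the period $q$ of $U$ with the period $p$ of the ring cycle; after $q$ iterates $U$ returns to itself while the rings have advanced by $q$ modulo $p$, so there is no well-defined return to the same rings unless $p\mid q$. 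A workable argument must instead exploit that for each of the $h$ relevant poles the image of the innermost ring surrounding it encloses $O_f$ and hence encloses $U_0$ (giving $h$ distinct rings nested around $U_0$), and then extract $h+1$ pairwise distinct members of the cycle of $U$ from the configuration analysis of the basic chain developed in \cite{small,relevantpole}; this bookkeeping is precisely what you have not supplied.

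Two further points. First, the ``in particular'' clause for $q=2$ reduces to excluding $h=1$, and your justification (``one checks via the basic-chain structure that such a cycle must be nested'') is another unproved assertion: the survey only records that the number of $H$-relevant poles is at most the number of $H$-maximal nests, which does not force a single nest when $h=1$; that $h\geq 2$ for every cycle of rings of a map in $M_o$ is itself a nontrivial fact requiring the machinery of \cite{relevantpole}. Second, a technical caveat on your dictionary: $\partial B(H_i)$ lies in the Julia set and may contain poles even when the open set $B(H_i)$ does not, so statements such as $f(B(H_i))=B(H_{i+1})$ must be formulated in terms of $f^p$-invariant core curves $\gamma\subset H_i$ (as in Lemma~\ref{surroundingapole}) rather than in terms of $B(H_i)$ itself.
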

On the presence of a particular type of omitted value, namely the Baker omitted value (bov), Ghora et al. \cite{ghora2} in 2021  explore the existence of Herman rings. A Baker omitted value  $b \in  \mathbb{C}$ of a meromorphic function $f$ is an omitted value of $f$ such that there is a disk $D$ with center at $b$ for which each component of the boundary of $f^{-1}(D)$ is bounded. The bov is called stable if it is contained in the Fatou set of the function. They proved the following theorem.

 \begin{theorem}\label{pperiodicHR}
	Let  $f$ be a function with a stable bov and $p \geq 3$. 
	Then the number of $p$-periodic Herman rings is finite.
\end{theorem}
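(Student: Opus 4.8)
The plan is to reduce the statement to a finiteness property of the poles enclosed by the rings, and then to exploit the two defining features of a \emph{stable Baker omitted value} separately: stability keeps the rings away from $b$, while the Baker condition keeps them away from $\infty$. First I would record the reductions. A Baker omitted value is in particular an omitted value, so $f\in M_o$; moreover $f$ cannot be entire, since an entire function has no Herman ring by Theorem~\ref{nohr}, so I may assume $f$ genuinely has poles. Being in $M_o$, $f$ has no nested or strictly non-nested Herman rings, and hence none of period $1$ or $2$, which is exactly why the statement begins at $p\ge 3$. For each $p$-cycle I would invoke the quoted structural facts: by Lemma~\ref{surroundingapole} the cycle contains a ring enclosing an $H$-relevant pole whose forward image surrounds $O_f\ni b$, and for every Herman ring $H$ the map $f\colon B(H)\to\widehat{\mathbb{C}}$ is injective.

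The heart of the argument is to confine, uniformly over all $p$-cycles, every $H$-relevant pole to one fixed bounded region $R\subset\mathbb{C}$. Let $D$ be the disk about $b$ supplied by the definition of the bov, so that every component of $\partial f^{-1}(D)$ is bounded. Stability places $b$ in a Fatou component $U_b$, so a full neighbourhood of $b$ is free of Julia points and therefore of ring boundaries; this prevents the $b$-surrounding rings (and the poles they enclose) from accumulating at $b$. The Baker condition is what prevents escape to $\infty$: a ring whose forward image surrounds $b$ corresponds, through the injective branch $f|_{B(H)}$, to a piece of the tract of $f^{-1}(D)$ over $b$, and since the boundary of that tract is bounded, the ring and its enclosed pole must sit inside the bounded region cut off by $\partial f^{-1}(D)$. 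Combining the two exclusions confines all relevant poles to a fixed compact annular region $R$ avoiding both $b$ and $\infty$. Because the poles of a transcendental meromorphic function are isolated and can accumulate only at the essential singularity at $\infty$, the region $R$ contains only finitely many poles; hence only finitely many poles are $H$-relevant for $p$-cycles.

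To pass from finitely many relevant poles to finitely many cycles, I would organise the cycles by the nest they create around $b$. To each $p$-cycle associate its outermost $b$-surrounding ring; distinct cycles yield distinct such rings, all surrounding $b$, so they are mutually nested. The confinement above applies verbatim to this nest: stability forbids accumulation at $b$ and the Baker condition forbids escape to $\infty$, so the entire nest lies in the fixed compact annulus around $b$, which carries only finitely many poles. Since each cycle contributes an $H$-relevant pole at its own level of this nest (by Lemma~\ref{surroundingapole} together with the maximal-nest structure, in which each maximal nest encloses at most one pole, sharpened by the bound $h\le\tfrac{-1+\sqrt{1+8p}}{2}$ from Theorem~\ref{lowerbound}), the existence of infinitely many cycles would require infinitely many distinct poles inside a compact set — impossible. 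Hence there are only finitely many $p$-cycles, that is, finitely many $p$-periodic Herman rings. The configuration rigidity underlying Lemma~\ref{3-periodic} is what guarantees that the levels of the nest are genuinely occupied by distinct poles rather than collapsing.

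The principal obstacle is the localisation in the second paragraph: turning the purely topological hypothesis that the boundary components of $f^{-1}(D)$ are bounded into a genuine, cycle-independent bound that traps every $b$-surrounding ring inside one fixed region. The real danger is a sequence of cycles whose $b$-surrounding rings drift off toward $\infty$; ruling this out is precisely the step where the \emph{Baker} nature of the omitted value, rather than its being merely asymptotic, is indispensable, and it is equally the reason the hypothesis \emph{stable} — which excludes the complementary accumulation at $b$ — cannot be dropped.
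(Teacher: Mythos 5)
The survey does not reproduce a proof of this theorem --- it is quoted from \cite{ghora2} --- so your argument can only be judged on its own terms, and judged that way it breaks at exactly the point you yourself flag as ``the principal obstacle.'' The Baker condition does not cut off a single ``bounded region'': the complement of $f^{-1}(D)$ in $\mathbb{C}$ is a union of infinitely many pairwise disjoint bounded continua whose union is unbounded. What one can legitimately extract (after shrinking $D$ so that $D\subset U_b$, the Fatou component of the stable bov, whence $f^{-1}(D)$ lies in the Fatou set and is disjoint from every ring and from its Julia-set boundary) is that each Herman ring, being connected and disjoint from the connected unbounded set $f^{-1}(D)$, lies inside one of these bounded continua. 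Only the rings that actually surround $b$ are forced into the single continuum containing $b$. But the $H$-relevant poles are enclosed by the pole-surrounding rings, not by their images, and by the very configuration results you invoke (in Lemma~\ref{3-periodic} the ring $H_1$ encloses $p_1$ but does not surround $O_f$) a pole-surrounding ring need not surround $b$. Such rings, and the poles they enclose, can a priori sit in complementary continua of $f^{-1}(D)$ that drift to infinity as the cycle varies, so your ``fixed compact annular region $R$'' is not established and the pole count does not close.

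There is a second gap even granting confinement: you pass from ``finitely many relevant poles'' to ``finitely many cycles'' by asserting that each cycle occupies its own level of the nest with its own pole. Nothing you cite prevents two rings belonging to \emph{different} cycles from enclosing the \emph{same} pole (they would simply be nested around it); the injectivity of $f$ on $B(H)$ is a statement about a single ring and does not obviously make the assignment cycle $\mapsto$ relevant pole finite-to-one, and Theorem~\ref{lowerbound} bounds $h$ within one cycle rather than across cycles. The overall skeleton --- stability repels the rings from $b$, the Baker condition controls the behaviour at $\infty$, and one counts a discrete invariant --- is a reasonable guess at the strategy of \cite{ghora2}, but both load-bearing steps are missing, so the proposal as written does not prove the theorem.
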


Further, it is shown that if the function has a stable bov and the bov is the only limit point of the critical values, then the number of Herman rings whose boundary intersects the forward orbit of a critical value is finite.

It is evident through out this section that many sufficient conditions are found to ensure the non-existence of Herman rings of a function in $M_o$. Based on that, our intuition is that if $f\in M_o$ then it cannot have a Herman ring. We propose the following conjecture.\\
\textbf{Conjecture:} If a general meromorphic function has a finite omitted value, it does not have a Herman ring of any period.

\section{Conclusion}\label{con}
 Herman rings provide essential insights into the detailed structure of the Fatou set, demonstrating the various sorts of rich behavior in holomorphic mappings. Tools such as quasiconformal surgery, rotation number analysis, and invariant curves have proven invaluable in constructing and analyzing Herman rings. However, many topics remain unanswered, such as the complete categorization of rational functions that admit Herman rings, and their parameter space behavior remains a fertile ground for investigation. We conclude by making a few remarks that lead to some research directions.
 
 \begin{enumerate}
 	\item Siegel discs and Herman rings show quasi-periodic dynamics, but the latter are seen in annular domains. This adds topological complexity and richness to dynamical classifications. Furthermore, a specific kind of periodic point known as a Siegel point guarantees the existence of a Siegel disk. Is there a point whose existence will guarantee the Herman ring's existence?
 	
 	\item It is known that rational functions of degree $2$ cannot have any Herman rings. However, further investigation is needed to classify all rational functions that can admit Herman rings, particularly in low degrees (degree $3$ or $4$), where examples are still limited. Despite several known constructions, a complete characterization of rational maps admitting Herman rings is still lacking. In particular, it remains open to identify necessary and sufficient conditions in terms of degree, critical orbit configurations, and combinatorial data. It is also fascinating to investigate parameter slices for families of functions with a Herman ring. Using Teichm\"{u}ller theory and quasiconformal deformation theory to map out these regions can also be explored.
	
	\item The behavior of Herman rings under perturbations of the underlying map is not well understood. In particular, the mechanisms by which Herman rings are created or destroyed in parameter space remain to be clarified. Also, the structure and size of the set of parameters corresponding to maps with Herman rings in natural families of rational and transcendental maps are still largely unknown.
	
	\item The geometric nature of the boundary components of Herman rings is largely mysterious. In particular, it is not known under what conditions these boundaries are Jordan curves, quasicircles, or have fractal structure. Little is known about the Hausdorff dimension or Lebesgue measure of the boundary of a Herman ring. Whether such boundaries can have positive area is an open question.
	
	\item A conformal mating is a construction that “glues” the Julia set of a function to the action of a Fuchsian group (or subgroup of $PSL(2,\mathbb{Z})$) on the upper half-plane, creating a single function that incorporates both behaviors consistently-conformally and dynamically. Conformal mating in the transcendental setting is far less developed than in the polynomial or rational case. Hence, one can study to develop an obstruction theory and identify structural conditions under which such matings could theoretically exist. The goal should be to determine when a topological mating between the  map with a Herman ring and the group action can be defined, and whether such a topological mating can be promoted to a conformal one. What distinguishes the correspondence map whenever the function has a Herman ring? Even in the absence of realizable matings, identifying dynamical obstructions--such as incompatibility of laminations, singular value distributions, or boundary correspondences---will yield meaningful insights into the limitations of transcendental matings. This exploratory direction, though ambitious, is grounded in concrete structural questions and offers potential for developing a new interface between transcendental dynamics and hyperbolic geometry.

	
	\item The presence of a Herman ring strongly suggests, but does not automatically guarantee, the existence of buried Julia components. In many known constructions, rational maps with Herman rings do exhibit buried Julia components arising from this topological separation. However, the existence of buried Julia components depends on the global arrangement of Fatou components and how the Julia set accumulates on their boundaries. It remains a subtle problem to determine whether every rational map with a Herman ring must contain buried Julia components or whether additional dynamical or topological conditions are required. As such, the question of whether Herman rings ensure the presence of buried Julia components is an important and largely open direction of investigation.
	
	\item Certain root-finding techniques are known to have disconnected Julia sets. This raises the question of whether there is a root-finding method whose Fatou set contains a Herman ring.
	
	\item Herman ring is a doubly connected periodic Fatou component. However, the opposite has not been proven yet.  Is every doubly connected periodic Fatou component of a meromorphic function always a Herman ring?

	\item  For a given Herman ring $H$, Theorem \ref{lowerbound} provides a lower bound on the period of $H$ in terms of the number of $H$-relevant poles. Is it possible to find an upper bound on the period of $H$? This question seems to be more relevant for meromorphic maps with finitely many poles and with an omitted value. Such a function is of the form  $f(z)=\frac{E(z)}{(z-z_0)(z-z_1)...(z-z_p)}$, $p>1$ and $z_0, z_1,\dots, z_p \in \mathbb{C}$ where $E$ is an entire function omitting a finite point.

	 \item For a function with a stable bov and fixed $p\geq 3$, Theorem \ref{pperiodicHR} proves that the number of $p$-periodic Herman rings is finite. However, this result does not exclude the possibility of the function having infinitely many Herman rings of different periods. Whether such a situation can occur, particularly when the bov is the only accumulation point of the critical values, remains an open question.

	\item Theorem \ref{periodicfatoucomponent} provides a necessary condition under which Herman rings cannot exist. In addition to this, several other non-existence results are known (see, for instance, \cite{tk, tk-zheng}). However, no known example exists of a function that omits a value and possesses a Herman ring. This has led to the conjecture that a meromorphic function with an omitted value cannot have a Herman ring.
	\par 	Most known examples of transcendental meromorphic functions with Herman rings are obtained through quasiconformal surgery. However, it is generally challenging to determine whether such constructed functions omit any value. So far, no example constructed in this way has been found to have an omitted value. Another compelling reason supporting the conjecture lies in a shared property among functions known not to have Herman rings. It is well-known that there is no Herman ring in the Fatou set of any polynomial, analytic self-map of the punctured plane, or transcendental entire function.  Each of these functions has a point with a finite backward orbit, namely $\infty$. In the case of a function with an omitted value, the backward orbit of the omitted value is empty. This standard feature, a point with a finite backward orbit, appears to obstruct the existence of Herman rings. For this reason, the conjecture can be considered heuristically valid, even if a complete proof is still lacking.
	
\end{enumerate}
\newpage
\bibliographystyle{plainurl}
\bibliography{ref1}

\begin{thebibliography}{10}

\bibitem{ahlfors2006lectures}
L.~V. Ahlfors.
\newblock {\em {Lectures on quasiconformal mappings}}, volume~38 of {\em
  University Lecture Series}.
\newblock American Mathematical Society, 2006.
\newblock \href {http://dx.doi.org/10.1090/ulect/038}
  {\path{doi:10.1090/ulect/038}}.

\bibitem{puncture2}
I.~N. Baker.
\newblock {Wandering domains for maps of the punctured plane}.
\newblock {\em Annales Fennici Mathematici}, 12(2):191--198, 1987.
\newblock \href {http://dx.doi.org/10.5186/aasfm.1987.1204}
  {\path{doi:10.5186/aasfm.1987.1204}}.

\bibitem{inb}
I.~N. Baker, J.~Kotus, and L.~Yinian.
\newblock {Iterates of meromorphic functions IV: Critically finite functions}.
\newblock {\em Results in Mathematics}, 22(3-4):651--656, 1992.
\newblock \href {http://dx.doi.org/10.1007/BF03323112}
  {\path{doi:10.1007/BF03323112}}.

\bibitem{bb}
R.~Bam{\'o}n and J.~Bobenrieth.
\newblock {The rational maps $z\to 1+\frac{1}{\omega z^d}$ have no Herman
  rings}.
\newblock {\em Proceedings of the American Mathematical Society},
  127(2):633--636, 1999.
\newblock \href {http://dx.doi.org/10.1090/S0002-9939-99-04566-9}
  {\path{doi:10.1090/S0002-9939-99-04566-9}}.

\bibitem{beardon2000iteration}
A.~F. Beardon.
\newblock {\em {Iteration of rational functions: Complex analytic dynamical
  systems}}, volume 132 of {\em Graduate Texts in Mathematics}.
\newblock Springer Science \& Business Media, 2000.

\bibitem{berg93}
W.~Bergweiler.
\newblock {Iteration of meromorphic functions}.
\newblock {\em Bulletin of the American Mathematical Society}, 29(2):151--188,
  1993.
\newblock \href {http://dx.doi.org/10.1090/S0273-0979-1993-00432-4}
  {\path{doi:10.1090/S0273-0979-1993-00432-4}}.

\bibitem{boettcher1904principal}
L.~E. Boettcher.
\newblock {The principal laws of convergence of iterates and their application
  to analysis}.
\newblock {\em Izv. Kazan. Fiz.-Mat. Obshch}, 14:155--234, 1904.

\bibitem{branner2014quasiconformal}
B.~Branner and N.~Fagella.
\newblock {\em {Quasiconformal surgery in holomorphic dynamics}}, volume 141 of
  {\em Cambridge Studies in Advanced Mathematics}.
\newblock Cambridge University Press, 2014.
\newblock \href {http://dx.doi.org/10.1017/CBO9781107337602}
  {\path{doi:10.1017/CBO9781107337602}}.

\bibitem{cayley1879application}
A.~Cayley.
\newblock {Application of the Newton-Fourier method to an imaginary root of an
  equation}.
\newblock {\em Quart. J. Pure Appl. Math}, 16:179--185, 1879.

\bibitem{small}
T.~K. Chakra, G.~Chakraborty, and T.~Nayak.
\newblock {Herman rings with small periods and omitted values}.
\newblock {\em Acta Mathematica Scientia}, 38(6):1951--1965, 2018.
\newblock \href {http://dx.doi.org/10.1016/S0252-9602(18)30858-0}
  {\path{doi:10.1016/S0252-9602(18)30858-0}}.

\bibitem{bovg}
G.~Chakraborty and S.~K. Datta.
\newblock {On quasi-nested wandering domains}.
\newblock {\em Filomat}, 36(8):2687--2694, 2022.
\newblock \href {http://dx.doi.org/10.2298/FIL2208687C}
  {\path{doi:10.2298/FIL2208687C}}.

\bibitem{gc}
G~Chakraborty, S.~K. Datta, and S.~Sahoo.
\newblock {Configurations of Herman rings in the complex plane}.
\newblock {\em Indian J. Math}, 63(3):375--391, 2021.

\bibitem{cremer1928zentrumproblem}
H.~Cremer.
\newblock {Zum zentrumproblem}.
\newblock {\em Mathematische Annalen}, 98(1):151--163, 1928.

\bibitem{DSI}
B.~Derrida, L.~De~Seze, and C.~Itzykson.
\newblock {Fractal structure of zeros in hierarchical models}.
\newblock {\em Journal of Statistical Physics}, 33:559--569, 1983.
\newblock \href {http://dx.doi.org/10.1007/BF01018834}
  {\path{doi:10.1007/BF01018834}}.

\bibitem{pdnf}
P.~Dom{\'\i}nguez and N.~Fagella.
\newblock {Existence of Herman rings for meromorphic functions}.
\newblock {\em Complex Variables, Theory and Application: An International
  Journal}, 49(12):851--870, 2004.
\newblock \href {http://dx.doi.org/10.1080/02781070412331298589}
  {\path{doi:10.1080/02781070412331298589}}.

\bibitem{duval}
P.~Du~Val.
\newblock {\em {Elliptic functions and elliptic curves}}, volume~9 of {\em
  London Mathematical Society Lecture Note Series}.
\newblock Cambridge University Press, 1973.
\newblock \href {http://dx.doi.org/10.1017/CBO9781107359901}
  {\path{doi:10.1017/CBO9781107359901}}.

\bibitem{nj}
N.~Fagella and J.~Peter.
\newblock {On the configuration of Herman rings of meromorphic functions}.
\newblock {\em Journal of Mathematical Analysis and Applications},
  394(2):458--467, 2012.
\newblock \href {http://dx.doi.org/10.1016/j.jmaa.2012.05.005}
  {\path{doi:10.1016/j.jmaa.2012.05.005}}.

\bibitem{fatou1919equations}
P.~Fatou.
\newblock {Sur les {\'e}quations fonctionnelles Premier m{\'e}moire}.
\newblock {\em Bulletin de la Soci{\'e}t{\'e} math{\'e}matique de France},
  47:161--271, 1919.
\newblock \href {http://dx.doi.org/10.24033/bsmf.1003}
  {\path{doi:10.24033/bsmf.1003}}.

\bibitem{fatou1926iteration}
P.~Fatou.
\newblock {Sur l'it{\'e}ration des fonctions transcendantes enti{\`e}res}.
\newblock 1926.

\bibitem{ghora2}
S.~Ghora and T.~Nayak.
\newblock {Rotation domains and Stable Baker omitted value}.
\newblock {\em Qualitative Theory of Dynamical Systems}, 20(3):92, 2021.
\newblock \href {http://dx.doi.org/10.1007/s12346-021-00527-0}
  {\path{doi:10.1007/s12346-021-00527-0}}.

\bibitem{relevantpole}
S.~Ghora and T.~Nayak.
\newblock {On periods of Herman rings and relevant poles}.
\newblock {\em Indian Journal of Pure and Applied Mathematics}, 53(2):505--513,
  2022.
\newblock \href {http://dx.doi.org/10.1007/s13226-021-00112-w}
  {\path{doi:10.1007/s13226-021-00112-w}}.

\bibitem{ghora}
S.~Ghora and T.~Nayak.
\newblock {Iteration of Some Topologically Hyperbolic Meromorphic Maps with
  Infinitely Many Singular Values}.
\newblock {\em Complex Analysis and Operator Theory}, 19, 2024.
\newblock \href {http://dx.doi.org/10.1007/s11785-024-01647-6}
  {\path{doi:10.1007/s11785-024-01647-6}}.

\bibitem{hkg}
Z.~Gong, W.~Qiu, and F.~Ren.
\newblock {A negative answer to a problem of Bergweiler}.
\newblock {\em Complex Variables, Theory and Application: An International
  Journal}, 30(4):315--322, 1996.
\newblock \href {http://dx.doi.org/10.1080/17476939608814933}
  {\path{doi:10.1080/17476939608814933}}.

\bibitem{ch}
C.~Henriksen.
\newblock {\em {Holomorphic Dynamics and Herman Rings}}.
\newblock 1997.

\bibitem{herman1979conjugaison}
M.~R. Herman.
\newblock {Sur la conjugaison diff{\'e}rentiable des diff{\'e}omorphismes du
  cercle {\`a} des rotations}.
\newblock {\em Publications Math{\'e}matiques de l'IH{\'E}S}, 49:5--233, 1979.
\newblock URL: \url{http://www.numdam.org/item/PMIHES_1979__49__5_0/}.

\bibitem{herman1984exemples}
M.~R. Herman.
\newblock {Exemples de fractions rationnelles ayant une orbite dense sur la
  sph{\`e}re de Riemann}.
\newblock {\em Bulletin de la Soci{\'e}t{\'e} math{\'e}matique de France},
  112:93--142, 1984.

\bibitem{hono}
G.~Honorato.
\newblock {On the Julia set of K\"{o}nig's root-finding algorithms}.
\newblock {\em Proc. Amer. Math. Soc.}, 141:3601--3607, 2013.
\newblock URL: \url{http://eudml.org/doc/234994}.

\bibitem{hu}
J.~Hu and Y.~Xiao.
\newblock No herman rings for regularly ramified rational maps.
\newblock {\em Proc. Amer. Math. Soc.}, 147(1):1587--1596, 2019.
\newblock \href {http://dx.doi.org/10.1090/proc/14347}
  {\path{doi:10.1090/proc/14347}}.

\bibitem{julia1918memoire}
G.~Julia.
\newblock {M{\'e}moire sur l'it{\'e}ration des fonctions rationnelles}.
\newblock {\em Journal de math{\'e}matiques pures et appliqu{\'e}es},
  1:47--245, 1918.
\newblock URL: \url{http://eudml.org/doc/234994}.

\bibitem{khin}
A.~Ya. Khintchine.
\newblock {\em {Continued fractions}}, volume~1 of {\em Translated by Peter
  Wynn. P. Noordhoff Ltd.}
\newblock Groningen, 1963.
\newblock \href {http://dx.doi.org/10.1126/science.145.3631.478}
  {\path{doi:10.1126/science.145.3631.478}}.

\bibitem{koenigs1884recherches}
G.~Koenigs.
\newblock {Recherches sur les int{\'e}grales de certaines {\'e}quations
  fonctionnelles}.
\newblock In {\em Annales scientifiques de l'Ecole normale sup{\'e}rieure},
  volume~1, pages 1--41, 1884.

\bibitem{leau1897etude}
L.~Leau.
\newblock {{\'E}tude sur les {\'e}quations fonctionnelles {\`a} une ou {\`a}
  plusieurs variables}.
\newblock {\em Annales de la Facult{\'e} des sciences de Toulouse pour les
  sciences math{\'e}matiques et les sciences physiques}, 11(3):E25--E110, 1897.

\bibitem{lm}
M.~Lyubich.
\newblock {The dynamics of rational transforms: the topological picture}.
\newblock {\em Russian Mathematical Surveys}, 41(4):43, 1986.
\newblock \href {http://dx.doi.org/10.1070/RM1986v041n04ABEH003376}
  {\path{doi:10.1070/RM1986v041n04ABEH003376}}.

\bibitem{mane}
R.~Mane.
\newblock {On a Theorem of Fatou}.
\newblock {\em Bol. Soc. Bras. Mat.}, 24:1--11, 1992.
\newblock \href {http://dx.doi.org/10.1007/BF01231694}
  {\path{doi:10.1007/BF01231694}}.

\bibitem{may}
J.P. May.
\newblock {\em {A Concise Course in Algebraic Topology}}, volume~1.
\newblock The University of Chicago Press, 1999.
\newblock \href {http://dx.doi.org/10.1090/S0273-0979-00-00899-5}
  {\path{doi:10.1090/S0273-0979-00-00899-5}}.

\bibitem{Milnor2000}
J.~Milnor.
\newblock On rational maps with two critical points.
\newblock {\em Experimental Mathematics}, 9(4):481--522, 2000.
\newblock \href {http://dx.doi.org/10.1080/10586458.2000.10504657}
  {\path{doi:10.1080/10586458.2000.10504657}}.

\bibitem{jm}
J.~Milnor.
\newblock {\em {Dynamics in one complex variable}}, volume 160 of {\em Annals
  of Mathematics Studies}.
\newblock Princeton University Press, 2011.

\bibitem{tk}
T.~Nayak.
\newblock {Herman rings of meromorphic maps with an omitted value}.
\newblock {\em Proceedings of the American Mathematical Society},
  144(2):587--597, 2016.
\newblock \href {http://dx.doi.org/10.1090/proc12715}
  {\path{doi:10.1090/proc12715}}.

\bibitem{tk-zheng}
T.~Nayak and J.~H. Zheng.
\newblock {Omitted values and dynamics of meromorphic functions}.
\newblock {\em Journal of the London Mathematical Society}, 83(1):121--136,
  2011.
\newblock \href {http://dx.doi.org/10.1112/jlms/jdq065}
  {\path{doi:10.1112/jlms/jdq065}}.

\bibitem{Qi}
J.~Qiao.
\newblock {Julia sets and complex singularities of free energies}.
\newblock {\em Memoirs of the American Mathematical Society}, 234(1102), 2015.
\newblock \href {http://dx.doi.org/10.1090/memo/1102}
  {\path{doi:10.1090/memo/1102}}.

\bibitem{rocha}
M.~M. Rocha.
\newblock {Herman rings of elliptic functions}.
\newblock {\em Arnold Mathematical Journal}, 6(3):551--570, 2020.
\newblock \href {http://dx.doi.org/10.1007/s40598-020-00167-3}
  {\path{doi:10.1007/s40598-020-00167-3}}.

\bibitem{smm}
M.~Shishikura.
\newblock {On the quasiconformal surgery of rational functions}.
\newblock {\em Annales scientifiques de l'{\'E}cole Normale Sup{\'e}rieure},
  20(1):1--29, 1987.
\newblock \href {http://dx.doi.org/10.24033/asens.1522}
  {\path{doi:10.24033/asens.1522}}.

\bibitem{ms}
M.~Shishikura.
\newblock {Trees associated with the configuration of Herman rings}.
\newblock {\em Ergodic Theory and Dynamical Systems}, 9(3):543--560, 1989.
\newblock \href {http://dx.doi.org/10.1017/S0143385700005174}
  {\path{doi:10.1017/S0143385700005174}}.

\bibitem{mss}
M.~Shishikura.
\newblock {A new tree associated with Herman rings}.
\newblock In {\em Complex dynamics and related fields}, pages 74--92. Research
  Institute for Mathematical Sciences, Kyoto University, 2002.

\bibitem{SH}
M.~Shishikura.
\newblock {The Connectivity of the Julia Set and Fixed Points}.
\newblock In {\em Complex Dynamics, Families and Friends}, pages 257--276. A K
  Peters/CRC Press, 2009.
\newblock \href {http://dx.doi.org/10.1201/b10617} {\path{doi:10.1201/b10617}}.

\bibitem{siegel1942iteration}
C.~L. Siegel.
\newblock {Iteration of analytic functions}.
\newblock {\em Annals of Mathematics}, 43(4):607--612, 1942.

\bibitem{Tan}
L.~Tan.
\newblock {Branched coverings and cubic Newton maps}.
\newblock {\em Fundamenta mathematicae}, 154(3):207--260, 1997.
\newblock \href {http://dx.doi.org/10.4064/fm-154-3-207-260}
  {\path{doi:10.4064/fm-154-3-207-260}}.

\bibitem{ywq}
Y.~Xiao and W.~Qiu.
\newblock {The rational maps $F_\lambda (z)= z^m+ \lambda/z^d$ have no Herman
  rings}.
\newblock {\em Proceedings-Mathematical Sciences}, 120(4):403--407, 2010.
\newblock \href {http://dx.doi.org/10.1007/s12044-010-0044-x}
  {\path{doi:10.1007/s12044-010-0044-x}}.

\bibitem{yang2017rational}
F.~Yang.
\newblock {Rational maps without Herman rings}.
\newblock {\em Proceedings of the American Mathematical Society},
  145(4):1649--1659, 2017.
\newblock \href {http://dx.doi.org/10.1090/proc/13336}
  {\path{doi:10.1090/proc/13336}}.

\bibitem{yang}
F.~Yang.
\newblock {On the formulas of meromorphic functions with periodic Herman
  rings}.
\newblock {\em Mathematische Annalen}, 384(3):989--1015, 2022.
\newblock \href {http://dx.doi.org/10.1007/s00208-021-02308-1}
  {\path{doi:10.1007/s00208-021-02308-1}}.

\bibitem{yoc}
J.~C. Yocooz.
\newblock {Analytic linearization of circle diffeomorphisms}.
\newblock 125--173, 1998.
\newblock \href {http://dx.doi.org/10.1007/978-3-540-47928-4_3}
  {\path{doi:10.1007/978-3-540-47928-4_3}}.

\bibitem{zh}
J.~H Zheng.
\newblock {Remarks on Herman rings of transcendental meromorphic functions}.
\newblock {\em Indian Journal of Pure and Applied Mathematics}, 31(7):747--752,
  2000.

\end{thebibliography}
\end{document}